\def\ie{\emph{i.e., }}
\def\eg{\emph{e.g., }}
\def\S{\mathbb S}
\def\DD{\mathbb D}
\def\R{\mathbb R}
\def\Z{\mathbb Z}
\def\W{\mathcal W}
\def\cP{{P}}
\def\ccP{{\EuScript P}}
\def\T{\mathbb T}
\def\N{\EuScript N}
\numberwithin{equation}{section}
\newtheorem{theorem}{Theorem}[section]
\newtheorem{q}[theorem]{Question}
\newtheorem{conj}[theorem]{Conjecture}
\newtheorem{prop}[theorem]{Proposition}
\newtheorem{add}[theorem]{Addendum}
\newtheorem{claim}[theorem]{Claim}
 \theoremstyle{remark}
\newtheorem{remark}[theorem]{Remark}
\newtheorem{cor}[theorem]{Corollary}
\newtheorem{lemma}[theorem]{Lemma}
\theoremstyle{remark}
\renewcommand\labelenumi{\theenumi.}
\renewcommand*\env@matrix[1][*\c@MaxMatrixCols c]{%
  \hskip -\arraycolsep
  \let\@ifnextchar\new@ifnextchar
  \array{#1}}
\begin{document}
\author{F. Thomas Farrell$^\ast$ and Andrey Gogolev$^{\ast\ast}$}
\title[Bundles with hyperbolic dynamics]{On bundles that admit fiberwise hyperbolic
dynamics}
\thanks{$^\ast$The first author was partially supported by NSF grant DMS-1206622.\\
$^{\ast\ast}$The second author was partially supported by NSF grants DMS-1204943, 1266282. He also would
like to acknowledge the support provided by Dean's Research Semester Award at SUNY Binghamton.}
\begin{abstract}
This paper is devoted to rigidity of smooth bundles which are equipped with fiberwise geometric
or dynamical structure. We show that the fiberwise associated sphere bundle to a bundle whose leaves are equipped with (continuously varying) metrics of negative curvature is a topologically trivial bundle when either the base space is simply connected or, more generally, when the bundle is fiber homotopically trivial. We present two very different proofs of this result:  a geometric proof and a dynamical proof. We also establish a number of rigidity results for bundles which are equipped with fiberwise Anosov dynamical systems. Finally, we present several examples which show that our results are sharp in certain ways or illustrate necessity of various assumptions.
\end{abstract}
\date{}
 \maketitle

\section{Negatively curved bundles}
\subsection{The results}
Recall that a {\it smooth bundle} is a fiber bundle $M\to E\stackrel{p}{\to} X$ whose fiber $M$ is a connected closed manifold and whose structure group is the diffeomorphism group $\textup{Diff}(M)$. A {\it negatively curved bundle}  is a smooth bundle  $M\to E\stackrel{p}{\to} X$ whose fibers $M_x=p^{-1}(x)$, $x\in X$, are equipped with Riemannian metrics $g_x$, $x\in X$, of negative curvature. Furthermore, we require that the Riemannian metrics $g_x$ vary continuously with $x$ in the $C^\infty$ topology. 

We say that a smooth bundle $M\to E\stackrel{p}{\to} X$ is {\it topologically trivial} if there exists a continuous map $r\colon E\to M$ such that the restriction $r|_{M_x}\colon M_x\to M$, is a homeomorphism for each $x\in X$. Note that if $M\to E\stackrel{p}{\to} X$ is topologically trivial then $(p,r)\colon E\to X\times M$ is a fiber preserving homeomorphism.
\begin{conj}[Farrell-Ontaneda]
 Let $X$ be a compact simply connected manifold or a simply connected finite simplicial complex and let $M\to E\stackrel{p}{\to} X$ be a negatively curved bundle. Then the bundle $p\colon E\to X$ is, in fact, topologically trivial.
\end{conj}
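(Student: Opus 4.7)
The plan is to bootstrap from the main theorem of this paper — topological triviality of the fiberwise unit sphere bundle $S(T^vE)\to X$ — to triviality of $E\to X$ itself. Let $\tau\colon S(T^vE)\to X\times SM$ be a topological trivialization fiberwise intertwining the Anosov geodesic flows $\phi^x_t$ on each $S_{g_x}M_x$; the goal is to upgrade $\tau$ to a trivialization $r\colon E\to M$.

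I would first extract a fiberwise basepoint from $\tau$. Fix $v_0\in SM$ and set $\sigma(x)=\pi\circ\tau^{-1}(x,v_0)$, where $\pi\colon S(T^vE)\to E$ is the footpoint projection; this is a continuous section $\sigma\colon X\to E$. Next I pass to universal covers: each $\tilde M_x$ is a Hadamard manifold, diffeomorphic to $\R^n$ via $\exp_{\tilde\sigma(x)}\colon T_{\tilde\sigma(x)}\tilde M_x\to \tilde M_x$. Combining $\exp$ with the sphere-bundle trivialization $\tau$ on the unit sphere in $T_{\tilde\sigma(x)}\tilde M_x$ and extending radially produces a fiberwise homeomorphism $\tilde r_x\colon \tilde M_x\to \R^n$ depending continuously on $x$.

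The final step is to descend $\tilde r_x$ to a map $r_x\colon M_x\to M$. This requires $\tilde r_x$ to conjugate the $\pi_1 M$-action by $g_x$-isometries on $\tilde M_x$ to a fixed $\pi_1 M$-action on $\R^n$, at least up to homotopy. The trivialization $\tau$ induces, by naturality of the geodesic compactification, an equivariant homeomorphism of ideal boundaries $\partial_\infty \tilde M_x\to S^{n-1}$, so equivariance holds at infinity; the remaining question is whether this boundary equivariance propagates to the interior in a manner compatible with the $\exp$-coordinates.

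The principal obstacle is exactly this propagation. A topological conjugacy of Anosov geodesic flows need not send footpoint fibers to footpoint fibers, so the $\tilde r_x$ built from $\tau$ and $\exp$ may fail to intertwine the deck actions on the nose; the failure is measured by how much $\tau$ mixes the stable/unstable directions with the flow direction. Closing this gap amounts to a parametrized form of marked length spectrum rigidity, which is open even in the unparametrized case. I therefore expect this descent step — trading orbit-equivalence of flows for genuine equivariance of the induced maps of manifolds — to be by far the hardest step, and to require either new rigidity input or a sufficiently delicate use of the simple connectivity / fiber-homotopy-triviality hypothesis on $X$ to absorb the resulting cohomological obstructions.
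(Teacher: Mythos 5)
The statement you are attempting is not proved in the paper at all: it is recorded there as an open conjecture of Farrell--Ontaneda, and the authors explicitly remark that the general case seems out of reach of their techniques. What the paper actually proves is strictly weaker, namely that the \emph{fiberwise associated sphere bundle} $S(p)\colon SE\to X$ of a negatively curved bundle is topologically trivial (Theorems~\ref{thm_main} and~\ref{thm_main_general}). So there is no proof in the paper to compare yours against, and your argument has to stand on its own.

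It does not, and you essentially say so yourself: the descent step in your last paragraph is not a technical loose end but the entire content of the conjecture. Concretely, the trivialization $\tau$ supplied by the paper's sphere-bundle theorems restricts on each fiber to an \emph{orbit equivalence} of the geodesic flows (it is built from the boundary homeomorphisms $\tilde q_\infty$ and an averaged orthogonal projection), not a flow conjugacy, and it has no reason to respect the footpoint fibration $S\tilde M_x\to\tilde M_x$. Consequently the map $\tilde r_x$ you assemble from $\exp_{\tilde\sigma(x)}$ and $\tau$ will not intertwine the deck actions, and upgrading equivariance at the ideal boundary to an equivariant, footpoint-compatible homeomorphism of the interiors is precisely a parametrized version of a rigidity problem that is open already for a single pair of negatively curved metrics; neither your proposal nor anything in the paper supplies that ingredient. (A smaller issue: $\tau$ restricted to the unit sphere at $\sigma(x)$ is an embedding of an $(n-1)$-sphere into the $(2n-1)$-dimensional manifold $SM$, not an identification with the unit sphere of a fixed tangent space, so even the construction of $\tilde r_x$ as stated needs repair.) In short, your plan reduces the conjecture to another open problem rather than proving it, which matches the paper's own assessment that the conjecture remains open.
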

\begin{remark} Techniques developed in~\cite{FO1, FO2, FO3} can be used to verify this conjecture in certain special cases (when $\dim M\gg\dim X$ and $X$ is a sphere). However the general conjecture above seems to be out of reach to these topological techniques. 
\end{remark}
In this paper classical dynamical systems techniques are used to verify related results in full generality which we proceed to describe. Given a smooth closed manifold $M$ we define its {\it tangent sphere bundle} $SM$ as the quotient of $TM\backslash M$ by identifying two non-zero tangent vectors $u$ and $v$ based at a common point $x\in M$ if and only if there exists a positive number $\lambda$ such that $u=\lambda v$. Similarly given a smooth bundle $M\to E\stackrel{p}{\to} X$ we define its {\it (fiberwise) associated  sphere bundle} $SM\to SE\stackrel{S(p)}{\to} X$ as the quotient of the space of non-zero vectors tangent to the fibers of $p\colon E\to X$. Given a (class of a) vector $v\in SE$ with a base point $x\in E$ we define $S(p)(v)=p(x)$. Then it is clear that the fibers of the associated sphere bundle are diffeomorphic to $SM$. Also note that if the fibers of a smooth bundle are equipped with continuously varying Riemannian metrics then the total space of the associated sphere bundle can be realized as the space of unit vectors tangent to the fibers of the original bundle.  
\begin{theorem}\label{thm_main}
 Let $p\colon E\to X$ be a smooth negatively curved bundle whose base space $X$ is a simply connected closed manifold or a simply connected finite simplicial complex. Then its (fiberwise) associated sphere bundle $S(p)\colon SE\to X$ is topologically trivial.
\end{theorem}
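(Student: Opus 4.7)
The plan is to exploit the fact that negative sectional curvature makes the geodesic flow $\varphi_x^t$ on each fiber $SM_x$ an Anosov flow, and that the family $\{\varphi_x^t\}_{x\in X}$ varies continuously with $x$ in a $C^\infty$ (hence $C^1$) sense. Structural stability of Anosov flows (Anosov's theorem) will produce, locally in $X$, continuous families of orbit equivalences between nearby fiber flows; simple connectedness of $X$ will then be used to assemble these into a global map $r\colon SE\to SM_{x_0}$ giving the required topological trivialization.

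For the local step, I would fix a basepoint $x_0 \in X$ and cover $X$ by open sets $U_\alpha$ small enough that $E|_{U_\alpha}$ admits a smooth trivialization. The induced smooth trivialization $SE|_{U_\alpha}\cong U_\alpha\times SM$ presents the fiberwise geodesic flows as a continuously varying family of Anosov flows on the fixed model manifold $SM$, which is $C^1$-close to a chosen reference flow $\varphi^t_{x_\alpha}$ with $x_\alpha\in U_\alpha$. Anosov's structural stability theorem then furnishes a continuous family of orbit equivalences $h_x\colon SM \to SM$ with $h_{x_\alpha} = \textup{Id}$, carrying the orbit foliation of $\varphi_{x_\alpha}^t$ to that of $\varphi_x^t$. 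This upgrades the smooth local trivialization of $SE|_{U_\alpha}$ to a dynamically canonical one.

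To go global, given $x \in X$ and a path $\gamma$ from $x_0$ to $x$, I would subdivide $\gamma$ into short arcs lying in the patches $U_\alpha$ and compose the corresponding local orbit equivalences to obtain a homeomorphism $r_\gamma\colon SM_{x_0} \to SM_x$, then declare $r|_{SM_x} = r_\gamma^{-1}$. The map $r$ will be well-defined and continuous provided $r_\gamma$ depends only on the endpoint-fixed homotopy class of $\gamma$; since $X$ is simply connected, this homotopy invariance is exactly what gets converted into well-definedness of the globally defined $r$.

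The principal obstacle is precisely this homotopy invariance. Structural stability pins down a conjugacy only modulo sliding along orbits, so naively composing local identifications along two homotopic paths need not return the same homeomorphism of $SM_{x_0}$. I would attempt to resolve this by normalizing each $h_x$ so as to eliminate the orbit-sliding ambiguity---either by pinning down the conjugacy via a canonically chosen transversal section, or by leveraging the additional rigidity available in the negatively curved setting (for instance the canonical identification of orbits via the endpoints at infinity of geodesics in the universal covers $\widetilde{M_x}$, which depends continuously and functorially on $g_x$). Once the local identifications are genuinely canonical the monodromy around any loop in $X$ is forced to be trivial, and the argument closes.
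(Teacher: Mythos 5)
Your high-level route (the fiberwise geodesic flows on $SM_x$ are Anosov, use structural stability locally and simple connectedness globally) is exactly the dynamical route the paper takes, since Theorem~\ref{thm_main} is deduced from Theorem~\ref{thm_anosov_flows_sc}. But the step you yourself flag as ``the principal obstacle'' is the actual mathematical content, and your proposal does not resolve it. For flows, structural stability produces only orbit equivalences, unique merely up to sliding along orbits, so the composite $r_\gamma$ along a path is not well defined even before one worries about homotopy invariance. Your first fix, normalizing via ``a canonically chosen transversal section,'' is not available: geodesic flows of closed negatively curved manifolds admit no global cross-section, and local sections give no coherent global normalization. Your second fix, identifying orbits via endpoints at infinity, only canonically identifies the \emph{orbit spaces} (this is the Morse-lemma step of the paper's geometric proof of Theorem~\ref{thm_main_general}); it says nothing about where along each orbit a point should be sent, so it still leaves exactly the reparametrization ambiguity you need to kill. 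In the geometric proof the paper handles this with Gromov's averaging maps $\tilde r_K$ (the naive foot-point projection $\tilde r_0$ can fail to be injective along geodesics, and one must average over time $K$ and use compactness of $X$), and this also requires the fiber homotopy trivialization furnished by Proposition~\ref{prop_trivialization} to compare boundaries at infinity of different fibers.

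The paper's dynamical proof does something genuinely different from what you sketch: it does not make the local identifications canonical at all. Instead it reduces the structure group of the bundle to the group $F$ of self orbit equivalences of the reference flow satisfying the regularity conditions P3--P4 (bi-H\"older, differentiable along the flow with H\"older derivative, supplied by parametrized structural stability as in \cite{KKPW,LMM}), uses the covering space of ``markings'' of orbits together with $\pi_1(X)=0$ to show that every element of the reduced structure group fixes \emph{every} orbit, and then proves this group is contractible (Lemma~\ref{lemma4}): the Livshitz theorem solves a cohomological equation turning each such $\varphi$ into a time-preserving self conjugacy, the rigidity of centralizers result of \cite{JH} forces it to be a time-$t_0$ map, hence $\varphi=g^{\beta_\varphi}$, and the homotopy $\Delta_s(\varphi)=g^{(1-s)\beta_\varphi}$ contracts $F$. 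A contractible structure group gives a section of the associated principal bundle and hence the trivialization. None of this is a routine normalization argument, so as written your proof has a genuine gap at precisely the point where you declare ``the argument closes.''
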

We say that a smooth bundle $M\to E\stackrel{p}{\to} X$ is {\it fiber homotopically trivial} if there exists a continuous map $q\colon E\to M$ such that the restriction $q|_{M_x}\colon M_x\to M$ is a homotopy equivalence for each $x\in X$. 
\begin{prop}\label{prop_trivialization}
 If $X$ is a simply connected closed manifold or a simply connected finite simplicial complex and $M\to E\stackrel{p}{\to} X$ is negatively curved bundle then it is fiber homotopically trivial.
\end{prop}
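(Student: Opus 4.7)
The plan is to reduce the statement to a classifying-space computation: a smooth bundle with fiber $M$ is fiber homotopically trivial precisely when its classifying map into $B\textup{aut}(M)$ is null-homotopic, where $\textup{aut}(M)$ denotes the topological monoid of self-homotopy equivalences of $M$. I will show that when the fibers are negatively curved and $X$ is simply connected, this classifying map is automatically null.

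First I would observe that since each fiber carries a metric of negative sectional curvature, $M$ is aspherical by the Cartan--Hadamard theorem, and moreover its fundamental group $\pi := \pi_1(M)$ has trivial center. If $\dim M = 2$ this is the standard fact that a closed hyperbolic surface group is centerless; if $\dim M \geq 3$, then $\pi$ is a non-elementary word-hyperbolic group, and such groups are centerless.

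Next I would invoke the standard computation that for an aspherical closed manifold $M$ with $Z(\pi) = 1$, the monoid $\textup{aut}(M)$ is homotopy equivalent to the discrete group $\textup{Out}(\pi)$. This comes from the decomposition of $\textup{map}(M,M)$ into components indexed by $\textup{Hom}(\pi,\pi)/\textup{inn}(\pi)$, with the component of $[\varphi]$ homotopy equivalent to $BC_\pi(\varphi(\pi))$; the components lying in $\textup{aut}(M)$ correspond to $\textup{Out}(\pi)$, and each of these is $BZ(\pi) = \textup{pt}$. Consequently $B\textup{aut}(M) \simeq K(\textup{Out}(\pi), 1)$.

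Finally, the smooth bundle $p \colon E \to X$ is classified by a map $X \to B\textup{Diff}(M)$, which composes with the natural map $B\textup{Diff}(M) \to B\textup{aut}(M)$ to give the fiber-homotopy classifying map $f \colon X \to K(\textup{Out}(\pi), 1)$. Because $X$ is simply connected, $f$ is null-homotopic, and a choice of null-homotopy produces the desired map $q \colon E \to M$ restricting to a homotopy equivalence on each fiber. The main ingredient, and the only nontrivial input, is the identification of the homotopy type of $\textup{aut}(M)$ above; once that is in hand, everything else is formal. A more geometric alternative would attempt to construct $q$ by hand using fiberwise exponential maps or center-of-mass averaging on the fibers, but the classifying-space route is the shortest.
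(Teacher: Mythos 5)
Your argument is correct, and its essential input is the same as the paper's: asphericity of $M$ together with triviality of the center of $\pi_1(M)$, fed into Gottlieb's computation of the homotopy groups of the space of self homotopy equivalences of an aspherical manifold. The difference is in the packaging. The paper works directly with the bundle: since $X$ is simply connected, the restriction of the bundle to the $1$-skeleton can be homotoped, via the covering homotopy theorem, to a map $q^1\colon p^{-1}(X^1)\to M_{x_0}$ that is a homotopy equivalence on fibers, and then the vanishing of $\pi_n(G(M))$ for $n\ge 1$ allows an inductive extension of $q^1$ over the higher skeleta; a separate pullback argument (a closed manifold is an ANR, hence homotopy equivalent to a finite complex) disposes of possibly non-triangulable closed manifolds $X$. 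You instead route everything through classifying spaces: each component of $\textup{map}(M,M)$ meeting $\textup{aut}(M)$ is a $K(Z(\pi),1)=\ast$, so $\textup{aut}(M)\simeq\textup{Out}(\pi)$ as a discrete group, $B\textup{aut}(M)\simeq K(\textup{Out}(\pi),1)$, and any map from the simply connected $X$ to it is null-homotopic; by Stasheff's classification the underlying fibration is then fiber homotopy equivalent to $X\times M\to X$, and composing such an equivalence with the projection to $M$ gives the map $q$ required by the paper's definition. The two routes are equivalent in substance---the skeletal obstruction argument is essentially the proof of the classifying-space statement you invoke---but yours is shorter once the classification machinery is granted, while the paper's is self-contained and explicit. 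The one loose end in your write-up is the closed-manifold case: to apply the classification theorem and the computation $[X,K(\textup{Out}(\pi),1)]=\ast$ you should remark that a closed topological manifold, being a compact ANR, has the homotopy type of a finite complex, and that fiber homotopy triviality is preserved under pulling back along a homotopy inverse---this is exactly the second paragraph of the paper's proof.
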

Because of the above proposition Theorem~\ref{thm_main} is a consequence of the following more general result.
\begin{theorem}\label{thm_main_general}
  Let $X$ be a closed manifold or a finite simplicial complex and $p\colon E\to X$ be a smooth negatively curved, fiber homotopically trivial bundle. Then its (fiberwise) associated sphere bundle $S(p)\colon SE\to X$ is topologically trivial.
\end{theorem}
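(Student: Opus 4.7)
The plan is to proceed dynamically via the geodesic flow. For each $x\in X$ the metric $g_x$ has negative curvature, so the geodesic flow on $SM_x$ is Anosov, and these flows vary continuously in $x$, yielding a fiberwise Anosov flow $\Phi_t$ on $SE$. Let $q\colon E\to M$ be a fiber homotopy trivialization, with $M$ identified with a reference fiber $(M_{x_0}, g_{x_0})$. It suffices to produce a continuous map $H\colon SE\to SM$ that restricts to a homeomorphism $h_x\colon SM_x\to SM$ on each fiber, for then $(S(p),H)\colon SE\to X\times SM$ will be the required fiberwise homeomorphism.

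To build $h_x$, I would restrict $q$ to $q_x\colon M_x\to M$, a homotopy equivalence between closed negatively curved manifolds. Lifting to universal covers yields a $\pi_1$-equivariant map $\tilde q_x\colon \tilde M_x\to \tilde M$; this is a quasi-isometry since, by the Schwarz--Milnor lemma, both universal covers are equivariantly quasi-isometric to $\pi_1(M)$. The Morse lemma in pinched negative curvature then extends $\tilde q_x$ uniquely to an equivariant homeomorphism $\partial\tilde q_x\colon \partial\tilde M_x\to \partial\tilde M$ of ideal boundary spheres. Since oriented geodesics in a negatively curved universal cover are parameterized by ordered pairs of distinct boundary points (the endpoints $\gamma(-\infty),\gamma(+\infty)$), $\partial\tilde q_x$ induces an equivariant homeomorphism between the spaces of oriented geodesics. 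Fixing a time parameter (for example, by closest-point projection to a base point, which is well-defined in pinched negative curvature) promotes this to an equivariant homeomorphism $\tilde h_x\colon S\tilde M_x\to S\tilde M$ realizing an orbit equivalence of the two geodesic flows; passing to quotients gives the sought $h_x\colon SM_x \to SM$.

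The main obstacle is continuity of the family $\{h_x\}$ in $x$. This reduces to continuity of the boundary extensions $\partial\tilde q_x$, together with a coherent choice of lifts. The global map $q$ supplies the latter. For the former, compactness of $X$ and $C^\infty$-continuity of $\{g_x\}$ yield uniform pinching of sectional curvatures and uniform Lipschitz constants for $q_x$, hence uniform quasi-isometry constants for the $\tilde q_x$; this in turn gives uniform Morse-lemma estimates, so that the endpoint map $\partial\tilde q_x(\xi)$ depends continuously on $(x,\xi)$. With continuity in hand, $(S(p),H)$ is a continuous fiber-preserving map whose restriction to each fiber is a homeomorphism of compact Hausdorff spaces; it is therefore a homeomorphism of bundles, establishing the topological triviality of $S(p)\colon SE\to X$.
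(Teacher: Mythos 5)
Your overall strategy is the same as the paper's (which follows Gromov): pass to the fiberwise universal covers, use the Morse lemma to extend the lifted fiber homotopy trivialization to boundary homeomorphisms $\partial\tilde q_x$, code oriented geodesics by ordered pairs of distinct boundary points, and assemble a fiberwise map $SE\to SM$, with continuity in $x$ coming from uniform pinching/quasi-isometry constants and compactness of $X$. But there is a genuine gap at the single most delicate step, the one you dispose of in a parenthesis: ``fixing a time parameter (for example, by closest-point projection to a base point)'' to promote the homeomorphism of geodesic spaces to a homeomorphism $\tilde h_x\colon S\tilde M_x\to S\tilde M$. Whatever choice you make here must simultaneously be $\pi_1(M)$-equivariant (so that it descends to the compact fibers $SM_x\to SM$) and injective/monotone along each orbit of the geodesic flow, and your proposal secures neither. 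If the time origin on the target geodesic is the foot of the perpendicular from a fixed base point $o\in\tilde M$, the construction is not equivariant: a deck transformation $\alpha$ does not carry the closest point of $\gamma$ to $o$ to the closest point of $\alpha\gamma$ to $o$, so the map does not descend to the quotient. If instead you mean the equivariant choice --- send the vector with footpoint $y$ on $\gamma$ to the vector at the orthogonal projection $pr(\tilde q(y))$ on the geodesic with endpoints $\tilde q_\infty(\gamma(\pm\infty))$ --- then equivariance and continuity hold, but injectivity along geodesics can fail: $\tilde q\circ\gamma$ is only a quasigeodesic tracking the target geodesic with bounded error, so $t\mapsto pr(\tilde q(\gamma(t)))$ need not be monotone, and distinct vectors on the same orbit can collide.

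This is exactly the difficulty the paper isolates: the naive map $\tilde r_0$ built from $\tilde q_\infty$ and orthogonal projection ``may fail to be injective along the geodesics.'' The fix (Gromov's averaging, with the detailed proof in Knieper) is to replace the projection of $\tilde q(\gamma(0))$ by the average $\frac1K\int_0^K pr(\tilde q(\gamma(t)))\,dt$; for $K$ sufficiently large this is strictly monotone in the flow direction while remaining equivariant and continuous, hence gives a fiberwise homeomorphism, and continuity of $q$ together with compactness of $X$ yields one $K$ that works for every fiber (this is also where your uniform-constants observation correctly enters). Without the averaging, or some equivalent equivariant device forcing monotonicity along orbits, the map you construct need not restrict to a homeomorphism on any fiber, so the final step of your argument does not go through as written.
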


It was shown in~\cite{FO2} that there are ``lots" of smooth bundles $E\to\mathbb S^k$, $(k>1)$ whose abstract fiber $M$ supports a negatively curved Riemannian metric, but which are not negatively curved. So Theorem~\ref{thm_main} does not apply to these bundles. Hence it is a priori conceivable that the conclusion of Theorem~\ref{thm_main} holds under the weaker assumption that the abstract fiber $M$ supports a negatively curved Riemannian metric. However the next result shows that this is not the case.
\begin{theorem}\label{thm_2}
For each prime number $p>2$, there exists a smooth bundle $M\to E\to \S^{2p-3}$ such that
\begin{enumerate}
 \item 
the (abstract) fiber $M$ is real hyperbolic, \ie $M$ supports a Riemannian metric of constant negative sectional curvature;
\item the (fiberwise) associated sphere bundle $SM\to SE\to \S^{2p-3}$ is not topologically trivial.
\end{enumerate}
\end{theorem}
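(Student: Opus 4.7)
My plan is to build the bundle $E\to \S^{2p-3}$ by clutching from a non-trivial $p$-torsion family of self-diffeomorphisms of an appropriate closed real hyperbolic manifold $M$, and then to certify topological non-triviality of its associated sphere bundle via a $p$-primary invariant coming from the algebraic $K$-theory of $\Z$.

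First I would pick a closed real hyperbolic manifold $M^n$ of dimension $n$ much larger than $2p-3$, chosen well inside the concordance-stable range of Igusa. This immediately guarantees condition (1). Next, using a small embedded round ball $B^n\hookrightarrow M$ and the extension-by-the-identity inclusion $\textup{Diff}(B^n,\partial)\hookrightarrow \textup{Diff}(M)$, I would transport a non-trivial class $\tilde\alpha\in\pi_{2p-4}(\textup{Diff}(B^n,\partial))$ of order $p$. The existence of $\tilde\alpha$ is the key technical input: it follows from the Hatcher--Waldhausen--Igusa identification (in the stable range) of $\pi_{2p-4}(\textup{Diff}(B^n,\partial))$ with $\pi_{2p-4}$ of the stable smooth pseudoisotopy space, together with the splitting of the latter off Waldhausen's $A$-theory, which produces a non-zero class from the $p$-primary summand of $K_{2p-3}(\Z)$ detected at odd primes by Soulé's étale Chern class. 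Pushing $\tilde\alpha$ forward to $\alpha\in\pi_{2p-4}(\textup{Diff}(M))$ and forming $E=(D_+^{2p-3}\times M)\cup_{\alpha}(D_-^{2p-3}\times M)$ by clutching along the equator $\S^{2p-4}$ yields a smooth bundle with the required hyperbolic fiber.

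To establish condition (2) I would analyze the derivative class $D\alpha\in\pi_{2p-4}(\textup{Diff}(SM))$ and argue that its image in $\pi_{2p-4}(\textup{Homeo}(SM))$ is non-zero. Because $\alpha$ is supported on the ball $B\subset M$, the derivative $D\alpha$ is localized on the product piece $SM|_B\cong B\times \S^{n-1}$ of the sphere bundle, reducing the question to a local computation on this model. The detection mechanism would be that the $K$-theoretic invariant underlying $\tilde\alpha$ is ultimately étale-cohomological, and hence topological, in nature, so it survives the forgetful map $\textup{Diff}\to\textup{Homeo}$.

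The hardest step is precisely this last survival claim: the forgetful map from smooth to topological automorphism groups can annihilate many $p$-primary classes, and one must verify that the specific class coming from $K_{2p-3}(\Z)$ is not among them. I expect to handle this by staying well within the stable concordance range, where smoothing theory provides a controlled $p$-local comparison between the smooth and topological pseudoisotopy spectra at odd primes, so that the $p$-primary class persists all the way to the classifying-space level and obstructs topological triviality of $SE\to \S^{2p-3}$.
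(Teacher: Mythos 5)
The central gap is the detection step for conclusion (2). The class you must show nonzero is the image of your ball-supported family under the composite $\pi_{2p-4}\textup{Diff}(\DD^n,\partial)\to\pi_{2p-4}\textup{Diff}(M)\stackrel{D}{\to}\pi_{2p-4}\textup{Diff}(SM)\to\pi_{2p-4}\textup{Top}(SM)$, and neither the derivative map $D$ nor the smooth-to-topological passage is functorial for the $K$-theoretic/pseudoisotopy invariants you invoke; saying the Soul\'e class is ``\'etale-cohomological, hence topological'' does not address either map, and your ``controlled $p$-local comparison of smooth and topological pseudoisotopy'' is precisely the step with no argument. The paper's proof gets around these two uncontrollable comparisons by an ``infinitely generated beats finitely generated'' device, and this forces a different choice of support: instead of a ball it uses $T=S^1\times\DD^{n-1}$, a tubular neighborhood of an essential closed curve, where $\pi_{2p-4}(\cP^s(T))$ contains a subgroup $\Z_p^\infty$ (from~\cite{FO2,FO3}). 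It compares the derivative-induced map $v$ with a product-extension map $u$ built from a trivialization of $ST$; the difference factors through the gauge-type space $\Gamma_0$ with $\pi_k\Gamma_0\cong\pi_{n+k}O(n)\oplus\pi_{n+k-1}O(n)$ finitely generated, and likewise $\pi_k(\ccP^s)\to\pi_k(\ccP)$ is an isomorphism only modulo the Serre class of finitely generated groups. An infinitely generated $\Z_p^\infty$ survives both corrections; a single class of order $p$, as in your proposal, has no such protection and could be annihilated at either step. Moreover the device is unavailable for a ball: $\pi_k\ccP^s(\DD^n)$ is finitely generated in each degree, so no $\Z_p^\infty$ exists there.

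The second gap is the claim that non-triviality ``reduces to a local computation'' on $SM|_B\cong B\times\S^{n-1}$. Even granting local non-triviality, you must show the class survives the inclusion-induced map into $\pi_{2p-4}(\textup{Top}(SM))$, which is a global statement. The paper obtains the needed injectivity from the Farrell--Jones splitting theorem of~\cite{FJ87} applied to the tube around an essential closed geodesic in the hyperbolic fiber, together with Hatcher's transfer isomorphism $q_*$ for the sphere bundle $ST\to T$ (using $n$ odd, $k$ in the stable range), after passing to $\Z_2$-coinvariants. These inputs are specific to tubes around essential geodesics and have no analogue for a ball; on the topological side the simply connected local model carries essentially no pseudoisotopy information to detect. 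So the choice of support (essential tube rather than ball) and the infinitely generated torsion are not technical conveniences but the substance of the proof, and without them your argument for (2) does not go through; your setup for (1) and the general clutching strategy are, by contrast, the same as the paper's.
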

We will prove Theorem~\ref{thm_2} in Section~\ref{sec_thm_2}. Now we proceed to give concise proofs of Proposition~\ref{prop_trivialization} and Theorem~\ref{thm_main_general}.

\subsection{Proofs}
\begin{proof}[Proof of Proposition~\ref{prop_trivialization}]
 We first consider the case where $X$ as a finite simplicial complex. Let $X^1\subset X$ be the 1-skeleton of $X$. First note that the inclusion map $\sigma\colon X^1\to X$ is homotopic to a constant map with value, say, $x_0\in X$. By the covering homotopy theorem, this homotopy is covered by a bundle map homotopy starting with the inclusion map $p^{-1}(X^1)\subset E$ and ending with a continuous map $q^1\colon p^{-1}(X^1)\to M_{x_0}$ which is a homotopy equivalence when restricted to each fiber.
 
 Denote by $G(M)$ the space of self homotopy equivalences of $M$. Then
 \begin{equation}
 \label{eq_homotopy_homotopy}
 \pi_n(G(M))=
 \begin{cases}
  Center(\pi_1(M)), & \mbox{if}\,\, n=1\\
  0, & \mbox{if}\,\, n\ge 2,
 \end{cases}
 \end{equation}
since $M$ is aspherical (see~\cite{Gott}). Note that center of $\pi_1(M)$ is trivial because $M$ is negatively curved. Hence $\pi_n(G(M))$ is trivial for $n\ge 1$ and we can inductively extend map $q^1$ to succesive skeletons of $X$ until we obtain a fiber homotopy trivialization $q$.

We now consider the case where $X$ is a closed manifold. If $X$ is smooth, then we are done by the first case since every smooth manifold can be triangulated. But there are many examples of closed topological manifolds that cannot be triangulated. However, every closed manifold is an ANR and, hence, is homotopy equivalent to finite simplicial complex $K$ (see~\cite{W}). Let $r\colon K\to X$ be a homotopy equivalence and $\sigma\colon K\to X$ be its homotopy inverse.  Also let
$$
(\xi_K)\;\;\;\;\;\;\;\; M\to \mathcal E\to K\;\;\;\;\;\;\;\;
$$
denote the pullback bundle along $r\colon K\to X$ of the given bundle
$$
(\xi_X)\;\;\;\;\;\;\;\; M\to E\stackrel{p}{\to} X\;\;\;\;\;\;\;\;
$$
Then bundle $(\xi_K)$ is negatively curved and, hence, is fiber homotopically trivial by the argument in the first case. 

But if $\xi_K$ is fiber homotopically trivial, then so is $\xi_X$ since
$$
\xi_X=id^*(\xi_X)=\sigma^*(r^*(\xi_X))=\sigma^*(\xi_K).
$$
\end{proof}
\begin{proof}[Proof of Theorem~\ref{thm_main_general}]
 Denote by $q\colon E\to M$ the fiber homotopy trivialization. Choose a base point $x_0\in X$. We identify the abstract fiber $M$ of the bundle with the fiber $M_{x_0}$ over $x_0$. We choose this identification so that $q|_M\colon M\to M$ induces the identity map on $\pi_1(M)$. Therefore $q_\#\colon \pi_1(E)\to\pi_1(M)$ splits
 the short exact sequence
 $$
 0\to\pi_1(M)\to\pi_1(E)\to\pi_1(X)\to 0.
 $$
Hence $\pi_1(E)\simeq\pi_1(X)\times\pi_1(M)$. 

Let $\rho\colon\tilde E\to E$ be a covering map such that $\rho_\#\pi_1(\tilde E)=\pi_1(X)\subset\pi_1(E)$. Let $\tilde p=p\circ\rho$. Note that $\rho_\#$ intertwines the long exact sequences for $p$ and $\tilde p$:
$$
\begin{xymatrix}
{
\ldots\ar[r] & \pi_2(X)\ar[d]_{id}\ar[r] & \pi_1(\tilde M)\ar[d]_{\rho_\#}\ar[r] & \pi_1(\tilde E)\ar[d]_{\rho_\#}\ar[r]^{\textup{isom.}} &\pi_1(X)\ar[d]_{id}\ar[r] & \pi_0(\tilde M)\ar[r] & 0\\
\ldots\ar[r] & \pi_2(X) \ar[r]^0 &  \pi_1(M)\ar[r] & \pi_1(E)\ar[r] & \pi_1(X)\ar[r] & 0 &
}
\end{xymatrix}
$$
We read from the above diagram that the fiber $\tilde M$ is a connected simply connected manifold.

Recall that $p\colon E\to X$ is a negatively curved bundle. The negatively curved Riemannian metrics on the fibers of $p\colon E\to X$ lift to negatively curved Riemannian metrics on the fibers of $\tilde p\colon\tilde E\to X$. For each fiber $\tilde M_x$, $x\in X$, consider the ``sphere at infinity" $\tilde M_x(\infty)$, which is defined as the set of equivalence classes of asymptotic geodesic rays in $\tilde M_x$. Given a point $y\in\tilde M_x$, consider the map $r_y\colon S_y\tilde M_x\to\tilde M_x(\infty)$ defined by $v\mapsto [\gamma_v]$, where $\gamma_v$ is the geodesic ray with $\gamma_v(0)=y$, $\dot\gamma_v(0)=v$, and $[\gamma_v]$ is the equivalence class of $\gamma_v$. Map $r_y$ is a bijection that induces sphere topology on $\tilde M_x(\infty)$. (This topology does not depend on the choice of $y\in \tilde M_x$.)

Let $\tilde E(\infty)=\sqcup_{x\in X}\tilde M_x(\infty)$. Charts for $\tilde p\colon\tilde E\to X$ induce charts for the bundle $\tilde p_\infty\colon \tilde E(\infty)\to X$ whose fiber is the sphere at infinity $\tilde M(\infty)$. Using the Morse Lemma one can check that any diffeomorphism (and more generally any homotopy equivalence) $f\colon M\to M$ of a negatively curved manifold $M$ induces a homeomorphism $f_\infty\colon \tilde M(\infty)\to\tilde M(\infty)$ that only depends on 
$f_\#\colon \pi_1(M)\to\pi_1(M)$ (see, \eg~\cite{Kn}). It follows that $\tilde M(\infty)\to\tilde E(\infty)\stackrel{\tilde p_\infty}{\to} X$ is a fiber bundle whose structure group is $Top(\tilde M(\infty))$. The action of $\pi_1(M)$ on $\tilde E$ extends to $\tilde E(\infty)$ in the natural way.

Denote by $\tilde q\colon\tilde E\to\tilde M$ a lifting of fiber homotopy trivialization $q\colon E\to M$. Again, by the Morse Lemma, $\tilde q$ induces a continuous map $\tilde q_\infty\colon \tilde E(\infty)\to\tilde M(\infty)$ such that the restriction $\tilde q_\infty|_{\tilde M_x(\infty)}\colon\tilde M_x(\infty)\to\tilde M(\infty)$ is a homeomorphism for each $x\in X$.

\begin{remark}
 \label{remark_hom_triv}
 Note that $S\tilde M$ is (equivariently) homeomorphic to $\tilde M\times\tilde M(\infty)$ via $\tilde M\times\tilde M(\infty)\ni (y,\xi)\mapsto r_y^{-1}(\xi)\in S\tilde M$. Therefore the map
 $S\tilde E\ni v\mapsto (\tilde q(y),\tilde q_\infty(r_y(v))\in \tilde M\times\tilde M(\infty)$ induces a fiber homotopy trivialization $SE\to SM$ for the associated sphere bundle. 
\end{remark}

Now we use an idea of Gromov~\cite{Gr}  to finish the proof. We code each vector in $S\tilde E$ by a pair $(\gamma, y)$, where $y$ is the base point of the vector and $\gamma$ is the oriented geodesic in $\tilde M_y$ in the direction of the vector. Furthermore, we code each oriented geodesic $\gamma$ in $\tilde M_y$ by an ordered pair of distinct points $(\gamma(-\infty),\gamma(\infty))\in \tilde M_y(\infty)\times \tilde M_y(\infty)$. Define
$\tilde r_0\colon S\tilde E\to S\tilde M$ by
$$
\tilde r_0(\gamma(-\infty),\gamma(\infty),y)=(\tilde q_\infty(\gamma(-\infty)),\tilde q_\infty(\gamma(\infty)),pr(\tilde q(y))),
$$
where $pr\colon\tilde M\to \tilde q_\infty(\gamma)$ is the orthogonal projection on the geodesic $\tilde q_\infty(\gamma)$ with endpoints $\tilde q_\infty(\gamma(-\infty))$ and $\tilde q_\infty(\gamma(\infty))$.
Because the restrictions $\tilde q_\infty|_{\tilde M_x(\infty)}$ are homeomorphisms, the restrictions  $\tilde r_0|_{S\tilde M_x}$ induce homeomorphisms on the corresponding spaces of geodesics. However $\tilde r_0|_{S\tilde M_x}$ may fail to be injective along the geodesics. To overcome this difficulty we follow Gromov and consider the maps $\tilde r_K\colon S\tilde E\to S\tilde M$ given by
$$
\tilde r_K(\gamma(-\infty),\gamma(\infty),y)=\left(\tilde q_\infty(\gamma(-\infty)), \tilde q_\infty(\gamma(\infty)),
\frac1K\int_0^Kpr(\tilde q(\gamma(t)))dt\right),
$$
for $K>0$. For each $x\in X$ there exists a sufficiently large $K$ such that the restriction $\tilde r_K|_{S\tilde M_x}$ is an equivariant homeomorphism (see~\cite{Kn} for a detailed proof). In fact, because $q$ is continuous, the same argument yields a stronger result: for each $x\in X$ there exists a sufficiently large $K$ such that the restrictions $\tilde r_K|_{S\tilde M_y}$ are equivariant homeomorphisms for all $y$ in a neighborhood of $x$.
Hence, because $X$ is compact, for a sufficiently large $K$ the restrictions of $\tilde r$ to the fibers are homeomorphisms. Project $\tilde r$ back to $E$ to obtain the posited topological trivialization $r\colon E\to M$.
\end{proof}

\subsection{Further geometric questions}
\label{sec_12}
Given a closed smooth Riemannian manifold $M$ denote by $St_kM$ the bundle of ordered orthonormal $k$-frames in the tangent bundle $TM$. As before, given a negatively curved bundle $M\to E\stackrel{p}{\to} X$ we can define its {\it (fiberwise) associated $k$-frame bundle} $St_kM\to St_kE\stackrel{St_k(p)}{\to} X$.

\begin{theorem}\label{thm_2frames}
 Let $X$ be a closed manifold or a finite simplicial complex and $p\colon E\to X$ be a smooth negatively curved, fiber homotopically trivial bundle. Then its (fiberwise) associated $2$-frame bundle $St_2(p)\colon St_2E{\to} X$ is topologically trivial.
\end{theorem}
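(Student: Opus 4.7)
My plan is to mimic the proof of Theorem~\ref{thm_main_general}, enhancing Gromov's construction so that it outputs a full ordered orthonormal 2-frame instead of a single unit vector. First I would repeat the universal-cover preliminaries: pass to the cover $\rho:\tilde E\to E$ with $\rho_\#\pi_1(\tilde E)=\pi_1(X)$, so each fiber $\tilde M_x$ is a simply connected negatively curved manifold; lift $q$ to an equivariant $\tilde q:\tilde E\to\tilde M$; and form the equivariant fiberwise homeomorphism $\tilde q_\infty$ on the sphere-at-infinity bundles. Given $(v,w)\in St_2\tilde M_x$ based at $y$, the first leg $v'\in T_{y'}\tilde M$ of the image 2-frame, together with its base point $y'$ on the target geodesic $\tilde\gamma_{v'}\subset\tilde M$ whose ideal endpoints are $\tilde q_\infty(\gamma_v(\pm\infty))$, is produced by Gromov's time-$K$ averaging of $v$ exactly as in the proof of Theorem~\ref{thm_main_general}.

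For the second leg, I would use the perpendicular geodesic $\gamma_w\subset\tilde M_x$ together with a Gromov-style average carried out in $\tilde M$. For a second parameter $L>0$, let $\pi_\perp$ denote orthogonal projection onto ${v'}^\perp\subset T_{y'}\tilde M$, and set
\[
\bar w \;=\; \frac{1}{L}\int_0^L \pi_\perp\!\bigl(\log_{y'}(\tilde q(\gamma_w(s)))\bigr)\,\mu(s)\,ds \;\in\; {v'}^\perp,
\]
against a suitably chosen weight $\mu(s)>0$; here $\log_{y'}$ is the globally defined inverse exponential on the simply connected negatively curved manifold $\tilde M$. Normalize $w' = \bar w/\lVert\bar w\rVert$. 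Every ingredient ($\tilde q$, $\tilde q_\infty$, $\log_{y'}$, $\pi_\perp$) is continuous and $\pi_1(M)$-equivariant, so the resulting map $\tilde r_{K,L}:(v,w)\mapsto(v',w')$ descends to a continuous $r_{K,L}:St_2 E\to St_2 M$.

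The hard part will be establishing fiberwise injectivity of $r_{K,L}$. Injectivity of $v\mapsto v'$ is already handled in the proof of Theorem~\ref{thm_main_general}; the new content is injectivity of $w\mapsto w'$ with $v$ (hence $y'$, $v'$) fixed. Distinct $w_1\neq w_2\in S(v^\perp)$ determine perpendicular geodesics $\gamma_{w_i}$ with distinct asymptotic endpoints in $\tilde M_x(\infty)$, but the images $\tilde q_\infty(\gamma_{w_i}(\infty))$ may happen to share the same angular position around the $\tilde\gamma_{v'}$-axis at infinity, in which case an unweighted asymptotic piece of the integral fails to separate $w_1$ from $w_2$. The crux of the second-leg argument is therefore to choose $\mu$ so that the finite-time contribution survives normalization and records the quasi-isometric divergence of $\gamma_{w_1}$ and $\gamma_{w_2}$ in $\tilde M_x$ that is guaranteed by the Morse Lemma after transport by $\tilde q$; this is the 2-frame analog of the Knieper-style divergence argument invoked at the corresponding step of the proof of Theorem~\ref{thm_main_general}. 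Once fiberwise injectivity is secured uniformly across the compact base $X$, invariance of domain upgrades $r_{K,L}|_{St_2 M_x}$ to a homeomorphism, and $r_{K,L}$ is the sought topological trivialization.
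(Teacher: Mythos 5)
There is a genuine gap, and it sits exactly where you locate it: the injectivity of the second leg $w\mapsto w'$ is the whole content of the theorem beyond Theorem~\ref{thm_main_general}, and your proposal does not prove it --- it only names the difficulty and expresses the hope that some weight $\mu$ can be chosen to resolve it. The difficulty is essential to your construction, not a technicality. Once $y'$ and $v'$ have been fixed by the first-leg averaging, the asymptotic data of $\gamma_w$ is the ideal point $\tilde q_\infty(\gamma_w(\infty))$, and the map ``normalized perpendicular projection at $y'$'' collapses each longitude through the two endpoints of $\tilde\gamma_{v'}$ to a single point of $S({v'}^\perp)$; the image under $\tilde q_\infty$ of the equatorial sphere $\{\gamma_w(\infty):w\perp v\}$ is merely a topologically embedded $(n-2)$-sphere separating those endpoints, and nothing forces it to meet each longitude once. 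So the asymptotic part of your integral genuinely fails to separate, while the finite-time part is built from $\tilde q$, which is only a lift of a fiber homotopy equivalence and carries no control beyond coarse geometry; there is no mechanism by which a fixed weight $\mu$ could guarantee injectivity uniformly (nor even that $\bar w\neq 0$, which you need before normalizing). As written, the crucial step is missing and it is not clear it can be completed along these lines.

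The paper takes a different and much cleaner route, which you could adopt: following Cheeger's idea (as recorded in~\cite{Gr}), a $2$-frame in $St_2\tilde M_x$ is coded by an ordered triple of pairwise distinct points of $\tilde M_x(\infty)$ --- the two endpoints of the geodesic tangent to the first vector and the endpoint of the perpendicular ray tangent to the second; conversely, given such a triple, convexity of the Busemann function of the third point restricted to the geodesic joining the first two produces a unique foot point and hence a unique frame, so the coding is a homeomorphism. With this coding the map $\tilde q_\infty$, being a homeomorphism on each fiber of the boundary-sphere bundle, induces a fiberwise homeomorphism on ordered triples of distinct ideal points and therefore on $St_2$; no averaging and no separate injectivity argument for the second vector is needed at all (indeed even the base point is recovered from the triple, so the Gromov time-$K$ average is not needed here either). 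The rest of the proof --- passing to the cover $\tilde E$, equivariance, continuity in the base point, and descent to $St_2E\to St_2M$ --- is exactly as in your first paragraph and as in the proof of Theorem~\ref{thm_main_general}.
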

The proof is the same except that one has to use the idea of Cheeger~\cite{Gr} who suggested to code 2-frames in $St_2\tilde M$ by ordered triples of points in $\tilde M(\infty)$.

\begin{q}
 Assume that $M$ and $N$ are closed negatively curved manifolds with isomorphic fundamental groups. Is it true that $St_kM$ is homeomorphic to $St_kN$ for $k\ge 3$? 
\end{q}
\begin{q}
 Let $X$ be a finite simplicial complex and $p\colon E\to X$ be a smooth negatively curved, fiber homotopically trivial bundle. Let $k\ge 3$. Is it true that its (fiberwise) associated $k$-frame bundle $St_k(p)\colon St_kE{\to} X$ is topologically trivial?
\end{q}
Potentially, our techniques could be useful for this question, cf. discussion in Section~\ref{sec_64}.

\section{Anosov bundles: an overview}

The rest of the paper is mainly devoted to topological rigidity of smooth bundles equipped with fiberwise Anosov dynamics in various different setups. Some of these results directly generalize Theorems~\ref{thm_main} and~\ref{thm_main_general} from the preceding section. We informally summarize our topological rigidity results in the following table.
\\

\newcolumntype{"}{@{\hskip\tabcolsep\vrule width 2pt\hskip\tabcolsep}}
\makeatother

\makeatletter
\def\hlinewd#1{%
  \noalign{\ifnum0=`}\fi\hrule \@height #1 \futurelet
   \reserved@a\@xhline}
\makeatother

\newcommand\VRule[1][\arrayrulewidth]{\vrule width #1}

\hskip-0.8cm
\begin{tabular*}{13.67cm}{"c"c"c"}
\hlinewd{2pt} 
\backslashbox{Fiberwise \\Dynamics }{Topology} &
\shortstack{\\ Simply connected \\ base} & \shortstack{\\ Fiberwise homotopically \\ trivial bundle} \\
\hlinewd{2pt} 
Anosov diffeomorphisms & 
\begin{tabular}{c}
\\ 
Theorem~\ref{thm_anosov_sc}  \\
\\
\\
\hline 
examples:~\ref{thm_example}
\end{tabular}
& 
\begin{tabular}{c}
\\ 
Theorem~\ref{thm_anosov_nil}$^{(\star)}$, \\
strong form: Addendum~\ref{add_23}$^{(\star)}$  \\
\\
\hline 
examples:~\ref{thm_example},~\ref{add_anosov}
\end{tabular}
\\
\hlinewd{2pt} 
Anosov flows
& 
\begin{tabular}{c}
\\ 
Theorem~\ref{thm_anosov_flows_sc}  \\
\\
\\
\hline 
examples:~\ref{thm_2}
\end{tabular}
& 
\begin{tabular}{c}
\\ 
Theorem~\ref{thm_anosov_flows}$^{(\star\star)}$, \\
strong form: ?  \\
\\
\hline 
examples:~\ref{thm_2},~\ref{thm_example_flow}
\end{tabular}
\\
\hlinewd{2pt} 
\shortstack {\\ Partially hyperbolic \\
dynamics} &  
\begin{tabular}{c}
\\ 
? \\
\\
\end{tabular}
& 
\begin{tabular}{c} 
 \\
 ?  \\
\\
\hline 
discussion: sections~\ref{sec_12} and~\ref{sec_64}.
\end{tabular}
\\
\hlinewd{2pt} 
\end{tabular*}
\\

We now proceed with some comments on the table and contents of our paper. Then, after presenting the background on structural stability, we outline a general strategy which is common to all the proofs of topological rigidity results which appear further in the paper.

\subsection{Comments}
\begin{enumerate}
 \item The cells of the table contain references to topological rigidity results. The first row  of the table indicates the assumption on the fiber bundle under which corresponding result holds and the first column of the table lists the corresponding dynamical assumption.
 \item Under the label ``examples'' we refer to statements which provide examples to which corresponding topological rigidity result applies. All these examples are fairly explicit and are not smoothly rigid.
 \item The topological rigidity results from the last column require extra assumptions: ($\star$) infranilmanifold fibers; ($\star\star$) absence of homotopic periodic orbits. We do not know if these extra assumptions can be dropped.
 \item Theorems~\ref{thm_2} and~\ref{thm_212} (which do not appear in the table) show that we cannot dispose of the dynamical assumption and still have topological rigidity.
 \item In Section~\ref{sec_61} we point out that examples given by Theorem~\ref{thm_example} can be viewed as partially hyperbolic diffeomorphisms with certain peculiar properties.
\end{enumerate}

\subsection{Structural stability} All our proofs of topological rigidity results which appear in further sections strongly rely on structural stability of Anosov dynamical systems~\cite{An}.

{\bf Structural stability for Anosov diffeomorphisms. } {\it Let $f\colon M\to M$ be an Anosov diffeomorphism. Then, for every $\delta>0$ there exists $\varepsilon>0$ such that if a diffeomorphism $\bar f\colon M\to M$ is such that
$$
d_{C^1}(f,\bar f)<\varepsilon
$$
then $\bar f$ is also Anosov and there exist a homeomorphism $h\colon M\to M$ such that
$$
h\circ\bar f=f\circ h
$$
and
$$
d_{C^0}(h,id_M)<\delta.
$$
Moreover, such homeomorphism $h$ is unique.
}

{\bf Structural stability for Anosov flows. } {\it Let $g^t\colon M\to M$ be an Anosov flow. Then, for every $\delta>0$ there exists $\varepsilon>0$ such that if a flow $\bar g^t\colon M\to M$ is $C^1$ $\varepsilon$-close to $g^t$ 
then $\bar g^t$ is also Anosov and there exist an orbit equivalence $h\colon M\to M$, (\ie $h$ is a homeomorphism which maps orbits of $\bar g^t$ to orbits of $g^t$) 
such that
$$
d_{C^0}(h,id_M)<\delta.
$$
}
The homeomorphism $h$ given by structural stability is merely H\"older continuous and, in general, not even $C^1$. Note that the uniqueness part of structural stability fails for flows. Indeed, if $h$ is an orbit equivalence which is $C^0$ close to $id_M$ then $g^\beta\circ h \circ \bar g^\alpha$ is another orbit equivalence which is $C^0$ close to identity provided that the functions $\alpha, \beta\colon M\to\mathbb R$ are $C^0$ close to the zero function.

\subsection{Scheme of the proof of topological rigidity}
Below we outline our approach to topological rigidity of Anosov bundles in a broad-brush manner. Definitions and precise statements of results appear in further sections.

Assume that $M\to E\to X$ is a smooth fiber bundle over a compact base $X$ whose total space $E$ is equipped with a fiberwise Anosov diffeomorphism or a fiberwise Anosov flow. To show that such Anosov bundle is topologically trivial we first enlarge the structure group of the bundle
$$
\textup{Diff}(M)\subset \textup{Top}(M)
$$
and, from now on, view $M\to E\to X$ as a topological bundle. Employing Anosov dynamics in the fibers, we can identify nearby fibers using homeomorphisms which come from structural stability. This allows to reduce the structure group of the bundle
$$
\textup{Diff}(M)\subset \textup{Top}(M)\supset F,
$$
where $F$ is the group of certain self conjugacies of a fixed Anosov diffeomorphism of $M$ (self orbit equivalences in the flow case), which are homotopic to $id_M$.

The rest of the proof strongly depends on particular context. We only remark that, in the case when $E$ is equipped with a fiberwise Anosov diffeomorphism, group $F$ is discrete because of uniqueness part of structural stability. (Indeed, the only self conjugacy of Anosov diffeomorphism which is close to identity is the identity self conjugacy.) In the case of flows, the group $F$ is larger, however we are able to show that $F$ is contractible, \ie we further reduce the structure group to a trivial group which implies that the bundle is trivial as $\textup{Top}(M)$-bundle.


\section{Bundles admitting fiberwise Anosov diffeomorphisms}
\label{section_anosov_bundles}

Let $M\to E\to X$ be a smooth bundle. A fiber preserving diffeomorphism $f\colon E\to E$ is called a {\it fiberwise Anosov diffeomorphism} if the restrictions $f_x\colon M_x\to M_x$, $x\in X$, are Anosov diffeomorphisms. A smooth bundle $M\to E\to X$ is called {\it Anosov} if it admits a fiberwise Anosov diffeomorphism.
\begin{theorem}\label{thm_anosov_sc}
 Let $X$ be a closed simply connected manifold or a finite simply connected simplicial complex. Assume $M\to E\stackrel{p}{\to}X$ is an Anosov bundle. Then the bundle $p\colon E\to X$ is, in fact, topologically trivial.
\end{theorem}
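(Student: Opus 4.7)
My plan is to follow the broad scheme of Section~2.3, reducing the structure group of the bundle to a discrete group via structural stability and then invoking the simple connectivity of $X$. Fix a basepoint $x_0\in X$, identify the abstract fiber $M$ with $M_{x_0}$, and put $f_0:=f|_{M_{x_0}}$. The central observation is that the centralizer
$$
Z(f_0):=\{h\in\textup{Top}(M) : h\circ f_0 = f_0\circ h\}
$$
is a discrete subgroup of $\textup{Top}(M)$ in the compact-open topology: by the uniqueness clause in structural stability, any $h\in Z(f_0)$ sufficiently $C^0$-close to $\textup{id}_M$ must equal $\textup{id}_M$. Consequently, every continuous family $\{\psi_s\}_{s\in[0,1]}\subset Z(f_0)$ with $\psi_0=\textup{id}_M$ is constantly $\textup{id}_M$.

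For each $x\in X$ and each continuous path $\gamma\colon[0,1]\to X$ from $x_0$ to $x$, I would use structural stability along $\gamma$ to build a topological conjugacy $H^\gamma_x\colon M\to M_x$ from $f_0$ to $f_x$. Since $[0,1]$ is contractible, the pullback bundle $\gamma^*E\to[0,1]$ admits a smooth trivialization, so the restriction of $f$ becomes a continuous family $\{g_t\}_{t\in[0,1]}$ of Anosov diffeomorphisms of $M$ with $g_0=f_0$. A standard continuation argument based on the local existence/uniqueness clauses of structural stability together with the compactness of $[0,1]$ then yields a unique continuous family $\{h_t\colon M\to M\}_{t\in[0,1]}$ with $h_0=\textup{id}_M$ satisfying $h_t\circ f_0=g_t\circ h_t$. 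Composing $h_1$ with the trivialization at the endpoint defines $H^\gamma_x$.

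The only nontrivial point---which I expect to be the main obstacle---is showing that $H^\gamma_x$ depends only on $x$ and not on $\gamma$. Given two paths $\gamma^0,\gamma^1$ from $x_0$ to $x$, the simple connectivity of $X$ provides a homotopy $\{\gamma^s\}_{s\in[0,1]}$ rel endpoints. Carrying out the preceding continuation argument parametrically over the contractible square $[0,1]^2$ produces a family $s\mapsto H^{\gamma^s}_x$ that varies continuously in $s$. Then $\psi_s:=(H^{\gamma^0}_x)^{-1}\circ H^{\gamma^s}_x$ is a continuous family in $Z(f_0)$ with $\psi_0=\textup{id}_M$, and hence identically $\textup{id}_M$ by the first paragraph. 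Thus $H^{\gamma^0}_x=H^{\gamma^1}_x$, so we may write $H_x$ for the common value. The map $X\times M\to E$ sending $(x,m)$ to $H_x(m)$ is a fiber-preserving continuous bijection of compact Hausdorff spaces, hence a homeomorphism, and its inverse furnishes the desired topological trivialization.
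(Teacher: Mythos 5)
Your proposal is correct and follows essentially the same route as the paper: local structural stability continued along paths via compactness of $[0,1]$, uniqueness of nearby conjugacies (which you phrase as discreteness of the centralizer $Z(f_0)$, exactly the point the paper makes via the uniqueness clause), simple connectivity to get path-independence, and continuity in the base point to assemble the trivialization. The only cosmetic difference is that you work in a trivialization of the pulled-back bundle over $[0,1]$ rather than directly with the fibers $M_{\gamma(t)}$, which changes nothing of substance.
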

\begin{theorem}\label{thm_anosov_nil}
 Let $X$ be a closed manifold or a finite simplicial complex. Assume that $M\to E\stackrel{p}{\to}X$ is a fiber homotopically trivial Anosov bundle, whose fiber $M$ is an infranilmanifold. Then the bundle $p\colon E\to X$ is, in fact, topologically trivial.
\end{theorem}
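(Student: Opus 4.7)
The plan is to implement the strategy outlined in Section~2: use structural stability of Anosov diffeomorphisms to reduce the structure group of $p\colon E\to X$ to a group $F$ of self-conjugacies of a reference Anosov diffeomorphism $f_0\colon M\to M$ that are homotopic to $\textup{id}_M$, and then use the infranilmanifold hypothesis to prove $F$ is trivial.

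First I would fix a basepoint $x_0\in X$, identify the abstract fiber $M$ with $M_{x_0}$, and set $f_0:=f_{x_0}$. Working in smooth local bundle trivializations, for every $x$ in a neighborhood $U$ of $x_0$ the restriction $f_x$ is $C^1$-close to $f_0$, so structural stability produces a unique homeomorphism $h_x\colon M_x\to M_{x_0}$ which is $C^0$-close to the local bundle identification and satisfies $h_x\circ f_x=f_0\circ h_x$. Uniqueness forces a cocycle condition on overlaps, so these homeomorphisms patch together into a continuous reduction of the bundle's structure group. Continuation along a loop $\gamma$ based at $x_0$ yields a monodromy homeomorphism $\phi_\gamma$ of $M_{x_0}$ which commutes with $f_0$, and this assignment defines a homomorphism $\rho\colon\pi_1(X,x_0)\to Z_{\textup{Top}}(f_0)$ into the topological centralizer of $f_0$.

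The fiber homotopy trivialization $q\colon E\to M$ then enters: it provides a global reference homotopy equivalence on each fiber with respect to which the holonomy $\phi_\gamma$ is built up from pieces homotopic to the identity, so $\rho$ factors through the subgroup $F\subset Z_{\textup{Top}}(f_0)$ of self-conjugacies homotopic to $\textup{id}_M$. Next, $F$ is shown to be trivial. By the Franks-Manning theorem, $f_0$ is topologically conjugate to an affine Anosov automorphism $A$ of the infranilmanifold $M$, so it suffices to show that every self-homeomorphism $h$ of $M$ commuting with $A$ and homotopic to $\textup{id}_M$ is the identity. Passing to a finite nilmanifold cover and then to the simply connected nilpotent Lie group $N$ covering $M$, one lifts $h$ to $\tilde h\colon N\to N$ at bounded distance from $\textup{id}_N$; after absorbing a translation (which is possible since $1$ is not an eigenvalue of the hyperbolic linear part of $A$) the lift commutes with a lift $\tilde A$ of $A$. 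Iterating the commutation relation and using hyperbolicity of $\tilde A$ on the stable and unstable directions forces the bounded displacement $\tilde h-\textup{id}_N$ to vanish in exponential coordinates, so $h=\textup{id}_M$. Hence $\rho$ is trivial, and the structure-group reduction globalizes to the desired topological trivialization of $p$.

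The main obstacle is verifying the second step rigorously: that the fiber homotopy trivialization truly forces the monodromy into the homotopic-to-identity component of $\textup{Homeo}(M)$, as opposed to other components of $Z_{\textup{Top}}(f_0)$ (which, for example, contains the cyclic group generated by $A$ itself). The third step is a standard bounded-intertwiner computation on nilpotent Lie groups, with the infranilmanifold case reducing to the nilmanifold case by passing to a suitable finite cover.
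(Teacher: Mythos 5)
Your first two steps (patching the structural-stability conjugacies into a principal bundle $\mathcal F(X)\to X$ with discrete structure group, and using $q$ together with Franks--Manning to see that the structure group is the centralizer $F$ of an affine Anosov diffeomorphism $L$ inside $\textup{Top}_0(M)$) match the paper's proof. The argument breaks at the final step: the claim that $F$ is trivial is false. For a hyperbolic automorphism $L$ of $\T^n$, the translations by the $|\det(L-I)|$ fixed points of $L$ commute with $L$ and are isotopic to $\textup{id}$, so $F$ is a nontrivial finite group; your lifting argument only shows that, after absorbing the translation $c$ with $(L-I)c\in\Z^n$, the shifted lift is the identity, i.e.\ $h$ is the translation by $c$ --- not that $h=\textup{id}$. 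Worse, the strategy of proving the monodromy $\rho\colon\pi_1(X)\to F$ trivial cannot be repaired: take $E=\T^n\times[0,1]/(x,1)\sim(x+c,0)$ with $c$ a fixed point of $L$; then $(x,t)\mapsto(Lx,t)$ is a fiberwise Anosov diffeomorphism, the bundle is fiber homotopically (indeed smoothly) trivial, yet the holonomy of $\mathcal F(\S^1)\to\S^1$ is the nontrivial element $T_c\in F$. So topological triviality does not come from triviality of the flat $F$-bundle.

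What the paper does instead, and what is missing from your plan, is Lemma~\ref{lemma_anosov_nil}: using Mal$'$cev theory and Walters' theorem one produces a \emph{connected} Lie subgroup $S\subset\textup{Top}_0(M)$ (of affine/nil-translation type, the identity component of $\N^G$) containing $F$, such that the inclusion $S\to G_0(M)$ is a weak homotopy equivalence. One then enlarges the structure group from the discrete group $F$ to $S$ and shows that the associated principal $S$-bundle $\mathcal F(X)\times_F S\to X$ is fiber homotopically trivial, via the map $(\varphi,s)\mapsto q\circ\varphi\circ s$ into $G_0(M)$ and the homotopy inverse of $S\simeq G_0(M)$; a fiber homotopically trivial principal bundle admits a section and is therefore trivial, which trivializes $E$. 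This is precisely where the fiber homotopy triviality hypothesis is used beyond forcing the holonomy into $F$, and it is the step your proposal lacks.
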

\begin{remark}
In the above theorem the assumption that the bundle is fiber homotopically trivial is a necessary one. To see this let $A, B\colon \T^n\to\T^n$ be two commuting hyperbolic automorphisms. Then the bundle  $\T^n\to E_A\stackrel{p}{\to}\S^1$, whose total space is defined as
$$
E_A=\T^n\times [0,1]/(x,0)\sim(Ax, 1),
$$
is Anosov. Indeed, the map
$$
E_A\ni (x,t)\mapsto (Bx, t)\in E_A
$$
is a fiberwise Anosov diffeomorphism.
\end{remark}
\begin{remark}
Theorem~\ref{thm_anosov_nil} can be generalized to a wider class of bundles whose fiber $M$ is only homeomorphic to an infranilmanifold. The modifications required in the proof are straightforward.
\end{remark}

We will explain in Section~\ref{sec_non_Anosov} how results of~\cite{FG2} imply that there are ``lots'' of smooth, fiber  homotopically trivial bundles $M\to E\to \mathbb S^d$ whose infranilmanifold fiber $M$ supports an Anosov diffeomorphism, but which are not Anosov bundles (and are not topologically trivial).

Two fiber bundles $M_1\to E_1\stackrel{p}{\to}X$ and $M_2\to E_2\stackrel{p_2}{\to}X$ over the same base $X$ are called {\it fiber homotopically equivalent} if there exists a fiber preserving continuous map $q\colon E_1\to E_2$ covering $id_X$ such that the restriction $q|_{M_{1,x}}\colon M_{1, x}\to M_{2,x}$ is a homotopy equivalence for each $x\in X$. Similarly, two fiber bundles $M_1\to E_1\stackrel{p}{\to}X$ and $M_2\to E_2\stackrel{p_2}{\to}X$ over the same base $X$ are called {\it topologically equivalent} if there exists a fiber preserving homeomorphism $h\colon E_1\to E_2$ covering $id_X$ such that the restriction $h|_{M_{1,x}}\colon M_{1,x}\to M_{2,x}$ is a homeomorphism for each $x\in X$. Theorem~\ref{thm_anosov_nil} can be generalized to the setting of non-trivial fiber bundles in the following way.
\begin{add}\label{add_23}
Let $X$ be a closed manifold or a finite simplicial complex. Assume that $M_1\to E_1\stackrel{p}{\to}X$ and $M_2\to E_2\stackrel{p_2}{\to}X$ are fiber homotopically equivalent Anosov fiber bundles over the same base $X$ whose fibers $M_1$ and $M_2$ are nilmanifolds. Then the bundles $p_1\colon E_1\to X$ and $p_2\colon E_2\to X$ are, in fact, topologically equivalent.
\end{add} 
\begin{remark}
Addendum~\ref{add_23} can be generalized to a wider class of bundles whose fiber $M_1$ and $M_2$ are only homeomorphic to nilmanifolds. The modifications required in the proof are  straightforward.
\end{remark}

We prove Theorem~\ref{thm_anosov_sc} now and postpone the proofs of Theorem~\ref{thm_anosov_nil} and Addendum~\ref{add_23} to Section~\ref{sec_thm_anosov}.
\begin{proof}[Proof of Theorem~\ref{thm_anosov_sc}]
Fix $x_0\in X$. For each $x\in X$ consider a path $\gamma\colon[0,1]\to X$ that connects $x_0$ to $x$. Because Anosov diffeomorphisms are structurally stable, for each $t\in[0,1]$ there exists a neighborhood of $t$, $\mathcal U\subset[0,1]$ and a continuous family of homeomorphisms $h_{s,t}\colon M_{\gamma(t)}\to M_{\gamma(s)}$ for $s\in \mathcal U$ such that
$$
h_{t,t}=id_{M_{\gamma(t)}}\;\;\;\mbox{and}\;\;\; h_{s,t}\circ f_{\gamma(s)}=f_{\gamma(t)}\circ h_{s,t}.
$$
Since $[0,1]$ is compact we can find a finite sequence $0=t_0<t_1<\ldots <t_N=1$ and homeomorphisms $h_{t_{i-1},t_i}$ that conjugate $f_{\gamma(t_{i-1})}$ to $f_{\gamma(t_i)}$. Clearly
$$
h_x=h_{t_{N-1},t_N}\circ h_{t_{N-2},t_{N-1}}\circ\ldots \circ h_{t_{0},t_1}
$$
is a conjugacy between $f_{x_0}$ and $f_x$. Because the ``local" conjugacies $h_{s,t}$ are unique and depend continuously on $s$, the resulting homeomorphism $h_x\colon M_x\to M_{x_0}$ does not depend on the arbitrary choices that we made. Moreover, two homotopic paths that connect $x_0$ to $x$ will yield the same homeomorphism $h_x$. Hence, since $X$ is simply connected, $h_x$ depends only on $x\in X$. It is also clear from the construction that $h_x$ depends on $x$ continuously. Therefore $E\ni y\mapsto h_{p(y)}(y)\in M_{x_0}$ gives the posited topological trivialization. 
\end{proof}

We will say that a smooth bundle $M\to E\stackrel{p}{\to}X$ is {\it smoothly trivial} if there exists a continuous map $r\colon E\to M$ such that the restriction $r|_{M_x}\colon M_x\to M$ is a diffeomorphism for each $x\in X$. If the base $X$ is a smooth closed manifold then the total space is also a smooth manifold. (This can be seen by approximating fiberwise smooth transition functions between the charts of the bundle by smooth transition functions.) In this case, the trivialization $r$ can be smoothed out so that the restrictions $r|_{M_x}\colon M_x\to M$ depend smoothly on $x\in X$. Hence, then map  $E\to X\times M$ given by $y\mapsto (p(y), r_{p(y)}(y))$ is fiber preserving diffeomorphism.
\begin{theorem}
 \label{thm_example}
 For each $d\ge 2$ there exists a smooth Anosov bundle $\T^n\to E\stackrel{p}{\to}\S^d$ which is not smoothly trivial.
\end{theorem}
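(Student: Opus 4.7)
The plan is to build the examples by an exotic clutching construction, producing smooth bundles over $\S^d$ which are topologically trivial (as they must be, by Theorem~\ref{thm_anosov_sc}) but smoothly non-trivial, together with a compatible fiberwise Anosov diffeomorphism. Fix a hyperbolic linear automorphism $A : \T^n \to \T^n$ with $n$ chosen large depending on $d$, and let $p \in \T^n$ be a fixed point of $A$ around which $A$ is linear in suitable coordinates.

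I would select a smooth clutching map $\alpha : \S^{d-1} \to \mathrm{Diff}(\T^n)$ representing a non-trivial class in $\pi_{d-1}(\mathrm{Diff}(\T^n))$, supported in a small neighborhood $U$ of $p$ and arranged to be equivariant with respect to the linear $A$-action on $U$. Such equivariant classes should be extractable from the Farrell--Hsiang, Hatcher--Wagoner, and Igusa computations of the pseudo-isotopy spaces of tori, in the spirit of~\cite{FG2}. The resulting clutched bundle
\[
E_\alpha = \bigl(\T^n \times D^d_+\bigr) \,\sqcup\, \bigl(\T^n \times D^d_-\bigr) \,/\, (y,z)_+ \sim (\alpha(z) y, z)_-
\]
is smoothly non-trivial because $\alpha$ is non-trivial in $\pi_{d-1}(\mathrm{Diff}(\T^n))$: a smooth trivialization of $E_\alpha$ would supply a null-homotopy of $\alpha$ in $\mathrm{Diff}(\T^n)$.

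To endow $E_\alpha$ with a smooth fiberwise Anosov diffeomorphism, I would set $f = A \times \mathrm{id}$ outside $U \times \S^d$. Inside the clutched region, the $A$-equivariance of $\alpha$ means that $A$ commutes with the twisting, so the prescription $y \mapsto Ay$ extends consistently across both trivializing hemispheres to a smooth fiber-preserving map. Taking $U$ small, the restriction of $f$ to each fiber differs from $A$ by a $C^1$-small perturbation, and so remains Anosov by structural stability. The outcome is the desired smooth Anosov bundle over $\S^d$.

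The main obstacle is the production of the equivariant clutching map $\alpha$: one needs non-trivial classes in $\pi_{d-1}(\mathrm{Diff}(\T^n))$ that can be represented by diffeomorphisms compactly supported near a fixed point of $A$ and commuting with the hyperbolic linearization there. The non-equivariant versions of such classes exist by classical pseudo-isotopy theory, but imposing equivariance with respect to a hyperbolic $\mathbb Z$-action is a substantive additional constraint; verifying that the Whitehead-group-valued obstructions that detect non-triviality survive this equivariance is, I expect, the technical heart of the proof.
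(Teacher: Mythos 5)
The construction you propose cannot get off the ground, because the equivariant clutching map you need does not exist except trivially. If each $\alpha(z)$, $z\in\S^{d-1}$, commutes with the hyperbolic automorphism $A$, then by rigidity of centralizers of hyperbolic toral automorphisms (Adler--Palais; cf.\ Walters~\cite{W}, which this paper itself uses in Section~5) each $\alpha(z)$ is an affine map of $\T^n$; an affine map that is the identity outside a small ball $U$ is the identity. Hence any family $\alpha\colon\S^{d-1}\to\textup{Diff}(\T^n)$ that is supported near the fixed point and commutes with $A$ is constantly the identity, and your clutched bundle is the trivial bundle. So the issue you flag as ``the technical heart'' --- producing nontrivial equivariant classes --- is not a hard verification but an impossibility; exact commutation with the hyperbolic map is exactly what must be abandoned. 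The auxiliary claim that shrinking $U$ makes the fiber maps $C^1$-close to $A$ is also unjustified: rescaling a fixed, isotopically nontrivial, compactly supported diffeomorphism into a smaller ball does not decrease its $C^1$ distance from the identity, so structural stability cannot be invoked that way.

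The paper's route is designed precisely to avoid this obstruction. The clutching diffeomorphism is not required to commute with $L$: one takes $\alpha\in\textup{Diff}_{k+d-1}(\DD^{n+d-1},\partial)$, a diffeomorphism preserving the first $k+d-1$ coordinates, placed in a product chart adapted to the stable/unstable splitting near a fixed point, so that it preserves the local unstable plaques. The fiberwise map over the northern hemisphere is then defined as $\alpha_r\circ\bar L\circ\alpha_1^{-1}$ along an isotopy $i_t$ that drags the support back to $\bar L\circ i$, and the Anosov property of the fiber maps --- which are \emph{not} small perturbations of $L$ --- is proved by invariance of the unstable foliation $\W^u_L$ together with a cone argument, after rescaling the torus metric so that the return time to the support region is large (Proposition~\ref{prop_anbundle2}, following~\cite{FG}). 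Likewise, smooth nontriviality is not read off from a class in $\pi_{d-1}(\textup{Diff}(\T^n))$ (whose nonvanishing you assert but do not prove): it is detected by the exotic twist sphere $\Sigma_\alpha$ with $[\alpha]\neq 0$ in the Gromoll subgroup $\Gamma_{k+d}^{n+d}$, via smoothing theory --- inequivalent concordance classes of smoothings of $\S^d\times\T^n$ combined with Hsiang--Wall rigidity of $\DD^k\times\T^n$ (Proposition~\ref{prop_anbundle1}). Both the dynamical and the topological halves of your outline would need to be replaced by arguments of this kind.
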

\begin{remark}
 The construction of examples of Anosov bundles for the above theorem relies on the idea of construction of ``exotic'' Anosov diffeomorphism from~\cite{FG}.
\end{remark}
\begin{add}
 \label{add_anosov}
 There exists smooth fiber homotopically trivial Anosov bundle $\T^n\to E\stackrel{p}{\to}\S^1$ which is not smoothly trivial.
\end{add}
\begin{remark}
 Note that Anosov bundles posited above are topologically trivial by Theorems~\ref{thm_anosov_sc} and~\ref{thm_anosov_nil}.
\end{remark}

Because the proof of Addendum~\ref{add_anosov} is virtually the same as the proof of Theorem~\ref{thm_example} but requires some alternations in notation, we prove Theorem~\ref{thm_example} only.
To explain the construction of our examples of Anosov bundles we first need to present some background on the Kervaire-Milnor group of homotopy spheres.

\subsection{Gromoll filtration of Kervaire-Milnor group}

A {\it homotopy $m$-sphere} $\Sigma$ is a smooth manifold which is homeomorphic to the standard $m$-sphere $\S^m$. Kervaire and Milnor showed that if $m\ge 5$ then the set of oriented diffeomorphisms classes of homotopy $m$-spheres forms a finite abelian group $\Theta_m$ under the connected sum operation~\cite{KM}. 

One way to realize the elements of $\Theta_m$ is through so called twist sphere construction. A {\it twist sphere} $\Sigma_g$ is obtained from two copies of a closed disk $\mathbb D^{m}$ by pasting the boundaries together using an orientation preserving diffeomorphism $g\colon\S^{m-1}\to\S^{m-1}$. We will denote by $\textup{Diff}(\cdot)$ the space of orientation preserving diffeomorphisms. It is easy to see that the map $ \textup{Diff}(\S^{m-1})\ni g\mapsto \Sigma_g\in\Theta_m$ factors through to a homomorphism
$$
\pi_0\textup{Diff}(\S^{m-1})\to\Theta_m,
$$
which was known to be a group isomorphisms for $m\ge 6$~\cite{C}.

Fix $l\in[0,m-1]$ and view the $(m-1)$-disk $\DD^{m-1}$ as the product $\DD^l\times\DD^{m-1-l}$. 
Let $\textup{Diff}_l(\DD^{m-1},\partial)$ be the group of diffeomorphisms of the $(m-1)$-disk that preserve first $l$-coordinates and are identity in a neighborhood of the boundary $\partial\DD^{m-1}$. Then we have
$$
\textup{Diff}_l(\DD^{m-1},\partial)\hookrightarrow\textup{Diff}(\DD^{m-1},\partial)\hookrightarrow\textup{Diff}(\S^{m-1}),
$$
where the last inclusion is induced by a fixed embedding $\DD^{m-1}\hookrightarrow\S^{m-1}$. These inclusions induce homomorphisms
of corresponding groups of connected components
$$
\pi_0\textup{Diff}_l(\DD^{m-1},\partial)\hookrightarrow\pi_0\textup{Diff}(\DD^{m-1},\partial)\hookrightarrow\pi_0\textup{Diff}(\S^{m-1})\simeq\Theta_m.
$$
The image of $\pi_0\textup{Diff}_l(\DD^{m-1},\partial)$ in $\Theta_m$ does not depend on the choice of the embedding $\DD^{m-1}\hookrightarrow\S^{m-1}$ and is called the {\it Gromoll subgroup} $\Gamma_{l+1}^m$. Gromoll subgroups form a filtration
$$
\Theta_m=\Gamma_1^m \supseteq \Gamma_2^m \supseteq \: \ldots \:
\supseteq \Gamma_m^m = 0.
$$
Cerf~\cite{C} proved that $\Gamma_1^m=\Gamma_2^m$ for $m\ge 6$. Higher Gromoll subgroups are also known to be non-trivial in many cases, see~\cite{ABK}; see also~\cite{FO1}. In particular, $\Gamma_{2u-2}^{4u-1}\neq 0$ for $u\ge 4$.

\subsection{Proof of Theorem~\ref{thm_example}: construction of the bundle}
\label{sec_bundle_cnstr}
Fix an Anosov automorphism $L\colon \T^n\to\T^n$ whose unstable distribution $E^u$ has dimension $k$. Let $q$ be a fixed point of $L$. Choose a small product structure neighborhood $U\simeq \DD^k\times\DD^{n-k}$ in the proximity of fixed point $q$ so that $L(U)$ and $U$ are disjoint. Also choose an embedded disk $\DD^{d-1}\hookrightarrow\S^{d-1}$.  Then we have the product embedding $i\colon\DD^{d-1}\times\DD^{k}\times\DD^{n-k}\hookrightarrow \S^{d-1}\times\T^n$.

Now pick a diffeomorphism $\alpha\in\textup{Diff}_{k+d-1}(\DD^{n+d-1},\partial)$, consider $i\circ\alpha\circ i^{-1}$ and extend it by identity to a diffeomorphism $\alpha_1\colon\S^{d-1}\times\T^n\to\S^{d-1}\times\T^n$. Now we employ the standard clutching construction, that is, we paste together the boundaries $\DD^d_-\times\T^n$ and $\DD^d_+\times\T^n$ using $\alpha_1\colon\partial\DD^d_-\times\T^n\to\partial\DD^d_+\times\T^n$. This way we obtain a smooth bundle $\T^n\to E_\alpha\stackrel{p}{\to}\S^d$. Clearly this bundle does not depend on the choice of the product neighborhood $U$ and the choice of embedding $\DD^{d-1}\hookrightarrow\S^{d-1}$, \ie on the choice of embedding $i$. 
\begin{prop}\label{prop_anbundle1}
 If $[\alpha]\neq 0$ in the group $\Gamma_{k+d}^{n+d}$, $d\ge 2$, then $\T^n\to E_\alpha\stackrel{p}{\to}\S^d$ is not smoothly trivial.
\end{prop}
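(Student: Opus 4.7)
The plan is to argue by contradiction: I assume the bundle $\T^n\to E_\alpha\stackrel{p}{\to}\S^d$ is smoothly trivial and deduce that the class $[\alpha]$ must vanish in $\Theta_{n+d}$, which forces its class in $\Gamma_{k+d}^{n+d}\subseteq\Theta_{n+d}$ to be zero, contradicting the hypothesis. The first ingredient is the standard clutching reformulation of bundle triviality: $E_\alpha$ is smoothly trivial if and only if the clutching function $\alpha_1\colon\S^{d-1}\to\textup{Diff}(\T^n)$, pointed at the identity (since $\alpha_1^x=\textup{id}$ for $x$ outside the chosen disk $\DD^{d-1}\subset\S^{d-1}$), is null-homotopic, equivalently $[\alpha_1]=0$ in $\pi_{d-1}\textup{Diff}(\T^n)$.

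The next step exploits the standard currying adjunction to relate $\alpha$ to the clutching. The element $\alpha\in\textup{Diff}_{k+d-1}(\DD^{n+d-1},\partial)$ corresponds to a pointed map $\S^{d-1}\to\textup{Diff}_k(\DD^n,\partial)$, hence a class $[\alpha]_\pi\in\pi_{d-1}\textup{Diff}_k(\DD^n,\partial)$. Since the construction of $\alpha_1$ is precisely ``forget the $k$-coordinate preservation, then extend each diffeomorphism by the identity through the chart $U\hookrightarrow\T^n$,'' the clutching class $[\alpha_1]$ is the image of $[\alpha]_\pi$ under the natural composite
\[
\pi_{d-1}\textup{Diff}_k(\DD^n,\partial) \longrightarrow \pi_{d-1}\textup{Diff}(\DD^n,\partial) \stackrel{\text{ext.}}{\longrightarrow} \pi_{d-1}\textup{Diff}(\T^n).
\]
The first step therefore tells us that $[\alpha]_\pi$ is sent to zero in $\pi_{d-1}\textup{Diff}(\T^n)$.

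The core technical step is to localize a null-homotopy back into the chart $U$. Given a null-homotopy $h_s\in\textup{Diff}(\T^n)$ of $\alpha_1$, I lift to the universal cover $\R^n\to\T^n$ to obtain a $\Z^n$-equivariant null-homotopy $\tilde h_s\in\textup{Diff}(\R^n)$ of $\tilde\alpha_1$. Since $\alpha_1$ is supported in the small chart $U$, its lift $\tilde\alpha_1$ is supported in the pairwise disjoint family $\{\tilde U_0+m\}_{m\in\Z^n}$ of lifts of $U$. Using compactness of the parameter space $\S^{d-1}\times[0,1]$ together with the separation of the lifts, one performs a parametric cutoff of $\tilde h_s$ near the boundary of a chosen lift $\tilde U_0$ to produce a null-homotopy of $\alpha$ through $\textup{Diff}(\DD^n,\partial)$. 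This yields $[\alpha]_\pi=0$ in $\pi_{d-1}\textup{Diff}(\DD^n,\partial)$.

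Inverting the adjunction, $[\alpha]_\pi=0$ means $\alpha$ is trivial in $\pi_0\textup{Diff}_{d-1}(\DD^{n+d-1},\partial)$; its image in $\Theta_{n+d}$, which is the Gromoll subgroup $\Gamma_d^{n+d}$, must then be zero. Since $\Gamma_{k+d}^{n+d}\subseteq\Gamma_d^{n+d}$, the class $[\alpha]\in\Gamma_{k+d}^{n+d}$ vanishes as well, contradicting the hypothesis. The main obstacle in this plan is the cutoff procedure in the third paragraph: at intermediate times the isotopy $\tilde h_s$ need not be supported in $\bigsqcup_m(\tilde U_0+m)$, so extracting an honest compactly-supported null-homotopy of $\alpha$ requires a careful parametric argument ensuring that the cutoff preserves the diffeomorphism property at every time and that contributions from different lifts do not re-contaminate $\tilde U_0$.
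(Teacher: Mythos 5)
There is a genuine gap, and it sits exactly where you flag it: the ``parametric cutoff'' in your third paragraph. What you need there is that the extension-by-identity map $\pi_{d-1}\textup{Diff}(\DD^n,\partial)\to\pi_{d-1}\textup{Diff}(\T^n)$ does not kill the class $[\alpha]_\pi$ (equivalently, that a null-homotopy of $\alpha_1$ through $\textup{Diff}(\T^n)$ can be compressed to one through diffeomorphisms supported in the chart $U$). This is not a technical detail to be handled by a cutoff: at intermediate times the diffeomorphisms $h_s$ (or their lifts $\tilde h_s$) are arbitrary diffeomorphisms of the torus, nowhere close to being supported in the lifts of $U$, and there is no general procedure for truncating an isotopy to a prescribed support while staying inside the diffeomorphism group. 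Indeed, if such a compression always existed, the map $\pi_{d-1}\textup{Diff}(\DD^n,\partial)\to\pi_{d-1}\textup{Diff}(M)$ would be injective for every closed $M$ and every chart; the failure of precisely this kind of compression is what makes homotopy groups of diffeomorphism groups (and the Gromoll filtration itself, where classes with constrained support can die after the constraint is forgotten) nontrivial. So the step you defer is essentially equivalent to the proposition itself, and asserting it via a cutoff is circular in effect.

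The paper closes this gap with inputs of a very different nature: assuming a fiber-preserving diffeomorphism $f\colon E_\alpha\to\S^d\times\T^n$ (normalized to be the identity over $\DD^d_-$), it compares $f$ with the Alexander-trick homeomorphism $g$, identifies $E_\alpha\cong(\S^d\times\T^n)\#\Sigma_\alpha$ so that $g$ is a smoothing which is inequivalent to the standard one because $\S^d\times\T^n$ is stably parallelizable and $\Sigma_\alpha$ is exotic (this is where $[\alpha]\neq0$ in $\Theta_{n+d}$ is used), while $f$ is tautologically a trivial smoothing; it then derives a contradiction by showing $f|_{\DD^d_+\times\T^n}$ and $g|_{\DD^d_+\times\T^n}$ are topologically concordant rel boundary, using the Hsiang--Wall rigidity $\mathcal S(\DD^k\times\T^n,\partial)=\ast$ for $k+n\ge5$ together with $\pi_iG(\T^n)=0$ for $i>1$. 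If you want to keep your clutching/currying framework (which is a perfectly reasonable reformulation, and your first two steps and the final bookkeeping with $\Gamma_{k+d}^{n+d}\subset\Theta_{n+d}$ are fine), you must replace the cutoff by an argument of comparable strength -- e.g. detect $\Sigma_\alpha$ through concordance classes of smoothings of $\S^d\times\T^n$ and invoke the Hsiang--Wall structure set computation -- at which point you have essentially reproduced the paper's proof.
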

\begin{proof}
 Detailed proofs of similar results were given in~\cite{FJ89} and~\cite{FO1}. So we will only sketch the proof of this result; but enough to show how it follows from arguments in~\cite{FJ89} and~\cite{FO1}. 
 
 Our proof proceeds by assuming that there exists a fiber preserving diffeomorphism $f\colon E_\alpha\to \S^d\times\T^n$ and showing that this assumption leads to a contradiction. One may assume that $f|_{\DD^d_-\times\T^n}$ is the identity map (we view $\S^d\times\T^n$ as $\DD^d_-\times\T^n$ and $\DD^d_+\times\T^n$ glued together using the identity map). Also note that, by the Alexander trick, there is a ``canonical" homeomorphism $g\colon E_\alpha\to \S^d\times\T^n$ which is fiber preserving and satisfies $g|_{\DD^d_-\times\T^n}=id$. One also easily sees that $E_\alpha$ is diffeomorphic to $\S^d\times\T^n\#\Sigma_\alpha$ and that under this identification $g$ becomes the ``obvious homeomorphism"
 $$
 \S^d\times\T^n\#\Sigma_\alpha\to \S^d\times\T^n.
 $$
 Here, $\Sigma_\alpha$ is the exotic twist sphere of $\alpha$. 
 
 Now $f$ and $g$ are two smoothings of the topological manifold $\S^d\times\T^n$. Recall that a homeomorphism 
 $$
 \varphi\colon N\to \S^d\times\T^n,
$$
where $N$ is a closed smooth $(d+n)$-dimensional manifold, is called a {\it smoothing} of $\S^d\times\T^n$. Two smoothings $\varphi_i\colon N_i\to \S^d\times\T^n$, $i=0,1$, are {\it equivalent} if there exists a diffeomorphism $\psi\colon N_0\to N_1$ such that the composite $\varphi_1\circ\psi$ is {\it topologically pseudo-isotopic} (or {\it concordant}) to $\varphi_0$; \ie there exists a homeomorphism
$$
\Phi\colon N_0\times[0,1]\to\S^d\times\T^n\times[0,1]
$$
such that $\Phi|_{N_0\times\{i\}}\colon N_0\times\{i\}\to \S^d\times\T^n\times\{i\}$ is $\varphi_0$ if $i=0$ and $\varphi_1\circ\psi$ if $i=1$.

Since $\S^d\times\T^n$ is stably parallelizable, it is well-known that the smoothing $g\colon \S^d\times\T^n\#\Sigma_\alpha\to \S^d\times\T^n$ is inequivalent to the ``trivial smoothing" $id\colon \S^d\times\T^n\to \S^d\times\T^n$; cf.~\cite{FJ89} or~\cite{FO1}. But the smoothing $f\colon E_\alpha\to \S^d\times\T^n$ is obviously equivalent to $id\colon \S^d\times\T^n\to \S^d\times\T^n$, since $f$ is a diffeomorphism. Therefore, $f$ and $g$ are inequivalent smoothings of $\S^d\times\T^n$.

We now proceed to contradict this assertion. Since $f|_{\DD^d_-\times\T^n}=g|_{\DD^d_-\times\T^n}$, it suffices to show that $f|_{\DD^d_+\times\T^n}$ is topologically pseudo-isotopic  to $g|_{\DD^d_+\times\T^n}$ relative to boundary. But this follows from the basic Hsiang-Wall topological rigidity result~\cite{HW} that the homotopy-topological structure set $\mathcal S(\DD^k\times\T^n,\partial)$ contains exactly one element when $k+n\ge 5$ together with the fact (mentioned in the proof of Proposition~\ref{prop_trivialization}) that $\pi_iG(\T^n)=0$ for all $i>1$.
\end{proof}
In the proof above we only used the assumption that diffeomorphism $f\colon E_\alpha\to \S^d\times\T^n$ is fiber preserving in order to be able to assume that $f_{\DD^d_-\times \T^n}$ is identity. Notice that this step also works for diffeomorphisms which are fiber preserving only over the southern hemisphere $\DD^d_-$ of $\S^d$. Hence, from the above proof, we conclude that there does not exist  a diffeomorphism $E_\alpha\to \S^d\times\T^n$ which is fiber preserving over the southern hemisphere $\DD^d_-$ of $\S^d$. This observation justifies the following definition.
 A smooth bundle $\T^n\to E\to\S^d$ is {\it weakly smoothly trivial } if there exists a diffeomorphism $E\to \S^d\times\T^n$ which is fiber preserving over the southern hemisphere $\DD^d_-$ of $\S^d$. By the above observation, we have the following somewhat stronger result.
 \begin{add}\label{add_weak_trivial}
  If $[\alpha]\neq 0$ in the group $\Gamma_{k+d}^{n+d}$, $d\ge 2$, then $\T^n\to E_\alpha\stackrel{p}{\to}\S^d$ is not even weakly smoothly trivial.
 \end{add}
This result will be used in Section~\ref{sec_flows} while proving Theorem~\ref{thm_example_flow}.

\subsection{Proof of Theorem~\ref{thm_example}: construction of the fiberwise Anosov diffeomorphism}
\label{sec_const}
For each $(x,y)\in \DD^d_-\times \T^n$ define 
$$
f(x,y)=(x, L(y)).
$$
Because of clutching we automatically have 
\begin{equation}\label{eq_boundary}
 f(x,y)=\alpha_1\circ \bar L\circ\alpha_1^{-1} (x,y),
\end{equation}
where $(x,y)\in\partial\DD^d_+\times\T^n$ and $\bar L\colon\S^{d-1}\times\T^n\to\S^{d-1}\times\T^n$ is given by $(x,y)\mapsto (x, L(y))$. Hence it remains to extend $f$ to the interior of the disk $\DD^d_+$.

Take a smooth path of embeddings $i_t\colon \DD^{d-1}\times\DD^{k}\times\DD^{n-k}\hookrightarrow \S^{d-1}\times\T^n$, $t\in[0,1]$, that satisfies the following properties.
\begin{enumerate}
 \item $i_t=\bar L\circ i$ for all $t\in[0,\delta]$, where $\delta>0$ is small;
 \item $i_1=i$;
 \item $p\circ i_t= p\circ i$ for all $t\in[0,1]$;
 \item the images $\textup{Im}(i_t)$, $t\in[0,1]$, belong to a small neighborhood of $\S^{d-1}\times\{q\}$, but do not intersect $\S^{d-1}\times\{q\}$;
 \item $i_t(x,\cdot,\cdot)\colon\DD^{k}\times\DD^{n-k}\to\{x\}\times\T^n$ is a local product structure chart for $L\colon\T^n\to\T^n$ for all $t\in[0,1]$ and $x\in\DD^{d-1}$.
\end{enumerate}
Denote by $\alpha_t\colon \S^{d-1}\times\T^n\to\S^{d-1}\times\T^n$ the composition $i_t\circ\alpha\circ i_t^{-1}$ extended by identity to the rest of $\S^{d-1}\times\T^n$.

Now we use polar coordinates $(r,\varphi)$ on $\DD^d_+$, $r\in[0,1]$, $\varphi\in\S^{d-1}$, to extend $f$ to the interior of $\DD^d_+$. Let
$$
f((r,\varphi),y)=\alpha_r\circ \bar L\circ \alpha_1^{-1}((r,\varphi),y).
$$
When $r=1$ this definition is clearly consistent with~(\ref{eq_boundary}). When $r\in[0,\delta]$ we have
\begin{multline*}
 f((r,\cdot),\cdot)=\alpha_r\circ\bar L\circ \alpha_1^{-1}=i_r\circ\alpha\circ i_r^{-1}\circ\bar L\circ i_0\circ\alpha\circ i_0^{-1}\circ(\bar L\circ i_1)\circ\alpha^{-1}\circ i_1^{-1}\\
 = i_0\circ\alpha\circ i_0^{-1}\circ i_0\circ\alpha^{-1}\circ i_1^{-1}=i_0\circ i_1^{-1}=\bar L,
\end{multline*}
for points in the image of $i_1$ and it is obvious that $f((r,\cdot),\cdot)=\bar L$ outside of the image of $i_1$. Note that because the path $\alpha_r$ is locally constant at $r=0$ diffeomorphism $f$ is smooth at $r=0$. 

Therefore we have constructed a fiber preserving diffeomorphism $f\colon E_\alpha\to E_\alpha$. The restrictions to the fibers $f_x\colon \T^n_x\to \T^n_x$ are clearly Anosov when $x\in \DD^{d}_-$ or close to the origin of $\DD^{d}_+$. Note that our construction depends on the choice of the diffeomorphism $\alpha\in\textup{Diff}_{k+d-1}(\DD^{n+d-1},\partial)$, the choice of embedding $i$ and the choice of isotopy $i_t$, $t\in[0,1]$.

\begin{prop}\label{prop_anbundle2}
 For each number $d\ge 2$, each Anosov automorphism $L\colon\T^n\to\T^n$ whose unstable distribution is $k$-dimensional and each diffeomorphism $\alpha\in\textup{Diff}_{k+d-1}(\DD^{n+d-1},\partial)$ there exists an embedding $i\colon\DD^{d-1}\times\DD^{k}\times\DD^{n-k}\hookrightarrow \S^{d-1}\times\T^n$ and an isotopy $i_t$, $t\in[0,1]$ such that the fiber preserving diffeomorphism $f=f(\alpha,i, i_t)\colon E_\alpha\to E_\alpha$ of the total space of the fiber bundle $\T^n\to E_\alpha \stackrel{p}{\to}\S^d$ is a fiberwise Anosov diffeomorphism.
\end{prop}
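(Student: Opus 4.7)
The plan is to verify that each fiberwise restriction $f_{(r,\varphi)}\colon\T^n_{(r,\varphi)}\to\T^n_{(r,\varphi)}$ is Anosov by constructing invariant cone fields, exploiting the special form of $\alpha$ together with a careful choice of $i$ and $i_t$ that places the modification region of each fiber into a small set well-separated under the dynamics of $L$.

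First I would record the fiberwise formula. Tracing through the definition of $f$ in Section~\ref{sec_const}, one sees that in the fiber over $(r,\varphi)$ with $\varphi=\iota(x_0)$ (where $\iota\colon\DD^{d-1}\hookrightarrow\S^{d-1}$ is the embedding underlying $i$),
\[
f_{(r,\varphi)} = (\alpha_r)_\varphi\circ L\circ (\alpha_1)_\varphi^{-1},
\]
where $(\alpha_t)_\varphi = j_t(x_0)\circ\alpha_{\mathrm{fib}}\circ j_t(x_0)^{-1}$ is supported in $V_t(\varphi)=j_t(x_0)(\DD^k\times\DD^{n-k})$ and is the identity outside. For $\varphi\notin\iota(\DD^{d-1})$ the modifications are trivial and $f_{(r,\varphi)}=L$ is immediately Anosov, so I focus on $\varphi\in\iota(\DD^{d-1})$.

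The key algebraic input is the form of $\alpha$. Since $\alpha\in\textup{Diff}_{k+d-1}(\DD^{n+d-1},\partial)$ we have $\alpha(x,u,s)=(x,u,\beta(x,u,s))$, so the fiber restriction $\alpha_{\mathrm{fib}}(u,s)=(u,\beta(x_0,u,s))$ preserves every stable leaf of $L$ set-wise in the chart. Condition (5) on the isotopy guarantees that $j_t(x_0)$ aligns $\R^k\times\{0\}$ with $E^u$ and $\{0\}\times\R^{n-k}$ with $E^s$, so $D(\alpha_t)_\varphi$ preserves the stable bundle $E^s$ of $L$. Consequently $Df_{(r,\varphi)}$ preserves $E^s$; the required contraction on $E^s$ will come from a returns argument below. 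For the unstable direction I would argue with cones. Let $\lambda>1>\mu>0$ denote the rates of $L$, and set $C=\sup_{t,x}\|Dj_t(x)\|\cdot\|Dj_t(x)^{-1}\|\cdot\|\alpha\|_{C^1}$, a uniform bound coming from compactness in $t,x$ and the fact that $\alpha$ is fixed. Fix an integer $N$ with $\mu^N C^2<1$. Shrink the product-structure neighborhood $U$ of the fixed point $q$ so that the iterates $L^{-2N}(U),\ldots,U,\ldots,L^{2N}(U)$ are pairwise disjoint, and choose $i,i_t$ with image in $\iota(\DD^{d-1})\times U$ satisfying (1)--(5); existence of such an isotopy is straightforward. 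Using the block form $D\alpha_{\mathrm{fib}}=\begin{pmatrix}I&0\\ \partial_u\beta&\partial_s\beta\end{pmatrix}$ one checks that each application of $f_{(r,\varphi)}$ either equals $L$ (and contracts slopes of an unstable cone by the factor $\mu/\lambda$) or introduces a bounded additive tilt followed by $L$'s rescaling. In any block of $N$ consecutive iterates the orbit enters the modification region at most once, so cone slopes evolve as a geometric series with ratio $\mu/\lambda$ and bounded additive term; choosing the cone aperture $\kappa$ larger than the limiting value produces an invariant unstable cone field $C^u$. The same bookkeeping shows that $Df_{(r,\varphi)}^N$ expands $C^u$-vectors by at least $\lambda^{N-1}/C^2>1$ and contracts $E^s$-vectors by at most $\mu^N C^2<1$.

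The main obstacle will be making the cone and expansion estimates uniform in $(r,\varphi)$: one must simultaneously control $C$, the derivatives $\partial_u\beta,\partial_s\beta$, and the number of visits to $V_1(\varphi)\cup L^{-1}(V_r(\varphi))$ within each $N$-block. Compactness of $[0,1]\times\S^{d-1}$ and continuity of the isotopy reduce uniformity to a finite list of pointwise estimates, and disjointness of sufficiently many $L$-iterates of $U$ is precisely what bounds the number of visits. Once uniform cone invariance and expansion on $C^u$, together with uniform contraction on $E^s$, are in place, the standard cone criterion produces the hyperbolic splitting on each fiber, and $f=f(\alpha,i,i_t)$ is a fiberwise Anosov diffeomorphism of $E_\alpha$.
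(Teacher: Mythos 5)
Your overall strategy---use the triangular form of $\alpha$ in the adapted charts to get an invariant foliation/distribution, beat the bounded distortion of the fixed diffeomorphism $\alpha$ by making visits to the modification region rare, and recover the complementary distribution by a cone argument---is the same as the paper's, which itself only sketches this step and delegates the details to Section~3.3 of~\cite{FG}. (One cosmetic discrepancy: under the natural reading of property~(5) your computation is that the conjugated $\alpha$ preserves the stable plaques and the bundle $E^s$, with cones producing the unstable distribution, whereas the paper's sketch states the invariance for $\W_L^u$ and uses cones for the stable distribution; this is a labelling matter and does not affect the structure of the argument.)

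The genuine gap is in the middle, quantitative step. First, the requirement ``choose $i,i_t$ with image in $\iota(\DD^{d-1})\times U$'' is incompatible with property~(1): for $t\in[0,\delta]$ one has $\textup{Im}(i_t)=\bar L(\textup{Im}(i))\subset\iota(\DD^{d-1})\times L(U)$, and $L(U)\cap U=\emptyset$ by the construction of $U$; the isotopy necessarily sweeps from $L(U)$ back to $U$ through a small neighborhood of $\S^{d-1}\times\{q\}$. Consequently the set where $Df_{(r,\varphi)}\neq DL$ is not contained over $U$ but is a region accumulating near the fixed point $q$ (its distance to $q$ is comparable to its diameter), and making $L^{-2N}(U),\dots,L^{2N}(U)$ pairwise disjoint does not bound how often an orbit meets the modification region within an $N$-block: orbits shadowing $q$ (entering near $W^s(q)$, lingering, exiting near $W^u(q)$) can hit the region several times in quick succession, and pieces of the region are carried into other pieces by $L$ itself (the two ends of the isotopy differ exactly by $\bar L$), so no shrinking of $U$ alone yields ``at most one visit per $N$ consecutive iterates.'' What is actually needed, and what the paper's proof supplies, is a return-time statement of the form: visits occur in clusters of uniformly bounded cardinality, separated by gaps that can be made arbitrarily long. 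The paper gets this from the extra rescaling parameter $m$ (the metrics $h+mg$ and the rescaled data $i^m,i^m_t$), which shrinks the whole configuration toward $\S^{d-1}\times\{q\}$ so that the gap between clusters grows without bound (on the order of $\log m$) while the per-cluster distortion stays bounded by a constant depending only on $\alpha$ and the fixed isotopy; this is precisely the key lemma referred to Section~3.3 of~\cite{FG}. Your proposal has no counterpart of this rescaling, and the disjointness bookkeeping you substitute for it does not deliver the required estimate; once such a return-time estimate is in place, your cone and contraction computations would go through essentially as written.
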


Clearly Propositions~\ref{prop_anbundle1} and~\ref{prop_anbundle2} together with non-triviality results for Gromoll subgroups imply Theorem~\ref{thm_example}.

\begin{proof}[Sketch of the proof of Proposition~\ref{prop_anbundle2}]
 Let $h$ be the round metric on $\S^{d-1}$ and $g$ be the flat metric on $\T^n$. Choose and fix an embedding $i\colon\DD^{d-1}\times\DD^{k}\times\DD^{n-k}\hookrightarrow \S^{d-1}\times\T^n$ and isotopy $i_t$, $t\in[0,1]$ that satisfy properties 1-5 listed in Subsection~\ref{sec_const}. Then for some small $\varepsilon>0$ and all $t\in[0,1]$
 $$
 \textup{Im}(i_t)\in V_\varepsilon^{h+g}(\S^{d-1}\times\{q\}), 
 $$
 where $V_\varepsilon^{h+g}(\S^{d-1}\times\{q\})$ is the $\varepsilon$-neighborhood of $\S^{d-1}\times\{q\}$ in $\S^{d-1}\times\T^n$ equipped with Riemannian metric $h+g$. (Recall that $q$ is a fixed point of $L$.)
 
 For each $m\ge1$ consider the Riemannian metric $h+mg$ and corresponding $\varepsilon$-neighborhood $V_\varepsilon^{h+mg}(\S^{d-1}\times\{q\})$. These neighborhoods are isometric in an obvious way 
 $$I\colon V_\varepsilon^{h+g}(\S^{d-1}\times\{q\})\to V_\varepsilon^{h+mg}(\S^{d-1}\times\{q\})$$
 Then there is a unique choice of embeddings $i_t^m$, $t\in[0,1]$, that makes the diagram
  
 $$
 \xymatrix
 {
  \DD^{d-1}\times\DD^{k}\times\DD^{n-k}\;\ar[r]^{i_t}\ar[rd]^{i^m_t} & V_\varepsilon^{h+g}(\S^{d-1}\times\{q\}) \ar[d]^I &\\
 &  V_\varepsilon^{h+mg}(\S^{d-1}\times\{q\})
 }
 $$
 commute. The following lemma clearly yields Proposition~\ref{prop_anbundle2}.
 \begin{lemma}
  For a sufficiently large number $m\ge1$ the diffeomorphism $f=f(\alpha, i^m, i^m_t)\colon E_\alpha\to E_\alpha$ constructed following the procedure of Subsection~\ref{sec_const} is a fiberwise Anosov diffeomorphism. 
 \end{lemma}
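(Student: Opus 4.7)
The plan is to verify that each fiber map $f_x\colon \T^n_x\to\T^n_x$ is Anosov by producing a $Df_x$-invariant hyperbolic splitting via a graph transform, exploiting the block lower-triangular form of $Df_x$ in the $E^u\oplus E^s$ frame of $L$ that is forced by the fact that $\alpha$ preserves the first $k+d-1$ coordinates.

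First I would unpack the derivative. Since $\alpha\in\textup{Diff}_{k+d-1}(\DD^{n+d-1},\partial)$ preserves the first $k+d-1$ coordinates, and by property~(5) of $i_t$ the first $k$ fiber coordinates align with the unstable direction $E^u$ of $L$ inside the chart, the conjugate $\alpha^m_t|_\varphi$ preserves each local stable leaf of $L$ (and is the identity off the chart). Hence $D\alpha^m_t|_\varphi$ preserves the stable bundle $E^s$, and in a frame adapted to $E^u\oplus E^s$,
$$
Df_x(y)=\begin{pmatrix} L_u & 0\\ P(y) & Q(y)\end{pmatrix},
$$
with diagonal blocks $L_u$ on $E^u$ and $Q(y)=\beta_{r,v}\,L_s\,\beta_{1,v}^{-1}$ on $E^s$. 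In particular $E^s$ is $Df_x$-invariant for every $m\ge 1$, so half of the hyperbolic splitting comes for free; moreover the projection $\pi_{E^u}$ intertwines $Df_x$ with $L_u$, so expansion on $E^u$ is inherited from $L$.

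Next I would construct the $Df_x$-invariant unstable subbundle via the graph transform. Seek $E^u_x=\{(v,\sigma(y)v):v\in E^u_y\}$ for a bundle map $\sigma\colon E^u\to E^s$; invariance becomes $\sigma(f_x(y))\,L_u=P(y)+Q(y)\,\sigma(y)$, and its iterate is a contraction once the product $\|Q(f_x^{-1}y)\cdots Q(f_x^{-N}y)\|\cdot\|L_u^{-N}\|$ is uniformly $<1$. To estimate this product I would pass to zoomed coordinates $\tilde y=\sqrt m(y-q)$: by the definition $i^m_t=I\circ i_t$, the map $\alpha^m_t|_\varphi$ becomes a fixed, $m$-independent diffeomorphism $\alpha_t^0$ of $\R^n$ compactly supported in a fixed annular region $A_t\subset\R^n\setminus\{0\}$, $L$ becomes the linear map $DL$, and the fiber becomes the large torus $\sqrt m\cdot\T^n$ on which $\tilde f_x=DL$ outside $A_r\cup DL^{-1}(A_1)$. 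The annular geometry of $A_t$ (which excludes the fixed point $q$) together with the hyperbolicity of $DL$ forces every $\tilde f_x$-orbit to spend at most a uniform constant $K_0=O(1)$ of consecutive iterates inside the support, while for large $m$ the inter-visit time grows (since the chart stays fixed but the torus expands), so the total number of visits is $O(N/\log m)$. Thus the product of $Q$-factors is dominated by $\|L_s\|^N$ up to a subexponential factor, and the graph transform contracts for all $m$ sufficiently large; uniform expansion on the resulting $E^u_x$ and contraction on $E^s$ complete the Anosov splitting. Uniformity in $x\in\DD^d_+$ follows from continuity of the construction in $(r,\varphi)$ and compactness of $\S^d$.

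The main obstacle will be the quantitative control on the accumulated perturbation along orbits. The blocks $P$ and $Q$ have $C^1$ norms bounded uniformly in $m$ (indeed, the rescaling makes $D\alpha^m_t$ merely a reparametrization of $D\alpha_t^0$, not a small perturbation of the identity), so pointwise cone invariance can fail inside the perturbed annulus. The resolution is the combination of (i) the $K_0$-bound on consecutive chart visits, stemming from the fixed aspect ratio $\log(c_2/c_1)/\log(1/\|L_s\|)$ of the annulus together with $L$-contraction along $W^s(q)$, and (ii) the $\Omega(\log m)$ separation between successive visits on the large rescaled torus, coming from $DL$-expansion outside the support. Establishing (ii) uniformly in the orbit is the delicate quantitative step; once it is in place, the graph transform estimate closes and each $f_x$ is Anosov.
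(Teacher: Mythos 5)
Your proposal is correct and follows essentially the same route as the paper's sketch (which defers details to~\cite{FG}, Section~3.3): after rescaling by $m$ the perturbation becomes a fixed-size diffeomorphism supported in an annular region avoiding the fixed point $q$, the visit frequency of fiber orbits to this support is $O(1/\log m)$, and this yields uniform hyperbolicity along the $Df_x$-invariant half of the splitting, with the complementary invariant distribution produced by a graph-transform/cone argument. The only difference is cosmetic: since $\alpha$ preserves the first $d-1+k$ coordinates with $k=\dim E^u$, the conjugated perturbation preserves the stable plaques, so your formulation (invariant $E^s$, quotient action $L_u$, unstable bundle by graph transform) is the mirror image of the wording in the paper's sketch, which states that $\W^u_L$ is preserved and obtains the stable distribution by cones --- the same mechanism applied with the roles of the two foliations interchanged.
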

We do not provide a detailed proof of this lemma because a very similar argument was given in Section~3.3 of~\cite{FG}. 

The first step  is to notice that the unstable foliation $\W_L^u$ of $L\colon\T^n\to\T^n$ is invariant under the restrictions to the fiber $f_x\colon\T_x^n\to\T_x^n$, $x\in\DD^{d}_+$. This is because $i_t^m\circ\alpha\circ(i_t^m)^{-1}$ preserves foliation $\W_L^u$. Then one argues that for a sufficiently large $m$ the return time (under $f_x$) to $\cup_{t\in[0,1]}\textup{Im}(i_t)$ is large, which implies that $\W_L^u$ is indeed an expanding foliation for $f_x$.

The second step is to employ a standard cone argument to show that there exists an $Df_x$-invariant stable distribution transverse to $T\W_L^u$.
\end{proof}

\subsection{Non-Anosov fiber homotopically trivial bundles}
\label{sec_non_Anosov}
The following result demonstrates that the assumption {\it ``$M\to E\to X$ is an Anosov bundle''} in Theorem~\ref{thm_anosov_nil} cannot be replaced by  the assumption {\it ``the fiber $M$ supports an Anosov diffeomorphism.''}
\begin{theorem}[cf. Theorem~\ref{thm_2}]
\label{thm_212}
Let $M$ be any infranilmanifold that has dimension $\ge 10$, supports an affine Anosov diffeomorphism and has non-zero first Betti number. Then for each prime $p$ there exists a smooth bundle $M\to E\to \S^{2p-3}$ such that
 \begin{enumerate}
  \item the bundle $M\to E\to \S^{2p-3}$ is fiber homotopically trivial;
  \item the bundle $M\to E\to \S^{2p-3}$ is not topologically trivial (and, hence, is not Anosov by Theorem~\ref{thm_anosov_nil}).
 \end{enumerate}
 \end{theorem}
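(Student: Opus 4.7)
The plan is to adapt the construction strategy from Theorem~\ref{thm_2} (carried out in Section~\ref{sec_thm_2}), exploiting the hypothesis $b_1(M)>0$ to exhibit a bundle that is fiber homotopically trivial but topologically non-trivial; the non-Anosov conclusion will then follow by Theorem~\ref{thm_anosov_nil}. For $p\geq 3$ the base $\S^{2p-3}$ is simply connected and fiber homotopy triviality will come from an obstruction-theoretic argument paralleling Proposition~\ref{prop_trivialization}: since $M$ is aspherical, $\pi_n(G(M))=0$ for $n\geq 2$, and the only potential obstruction lies in $\pi_1$-level data, which I will arrange to vanish by construction. The degenerate case $p=2$ (base $\S^1$) must have fiber homotopy triviality verified directly, by choosing the clutching function to lie in the identity component of $G(M)$.

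For the construction itself, I would first use $b_1(M)>0$ to realize a primitive class $u\in H^1(M;\Z)$ by a smooth submersion $\phi\colon M\to\S^1$. The key step is then to combine $\phi$ with a $p$-primary torsion element of the stable smooth pseudo-isotopy spectrum in degree $2p-3$, of the type provided by the Hatcher--Waldhausen--Igusa framework (which detects the classical $\alpha_1$-family in $\pi_{2p-3}^s$). The dimension bound $\dim M\geq 10$ is imposed precisely to stay within the concordance-stable range. The resulting class $\alpha\in\pi_{2p-3}(\textup{Diff}(M,\partial))$ serves as a clutching function from which a smooth bundle $M\to E\to\S^{2p-3}$ is built in direct analogy with Section~\ref{sec_bundle_cnstr}. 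These ingredients together are what~\cite{FG2} is expected to supply.

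To show the bundle is not topologically trivial, I would argue by contradiction exactly as in Proposition~\ref{prop_anbundle1}: a fiber-preserving homeomorphism $E\to\S^{2p-3}\times M$ would force $\alpha$ to vanish in the \emph{topological} pseudo-isotopy group, and the Hsiang--Wall rigidity result~\cite{HW} together with topological concordance stability would then trivialize $\alpha$ at the level of the topological pseudo-isotopy spectrum as well. The element $\alpha$ is manufactured, however, from a class surviving the smooth-to-topological comparison map, yielding the contradiction.

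The principal obstacle is this last step: certifying that the $p$-torsion pseudo-isotopy class we introduce is visible not only smoothly but topologically on the infranilmanifold $M$. This is where both the primality of $p$ and the hypothesis $b_1(M)>0$ enter essentially---the primality because the relevant $\alpha_1$-family in the stable pseudo-isotopy spectrum is detected only $p$-primarily in degree $2p-3$, and the positive first Betti number because it is the $H^1$-class $u$ that transports the universal $\alpha_1$-element into a topologically nontrivial class on $M$. Once this detection is in hand, topological non-triviality follows, and Theorem~\ref{thm_anosov_nil} immediately excludes the bundle from being Anosov.
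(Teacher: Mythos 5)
Your strategy is essentially the paper's: obtain from~\cite{FG2} a spherical family of diffeomorphisms of $M$ that is detected topologically but dies in $G(M)$, build $E$ by clutching, read off fiber homotopy triviality from~(\ref{eq_homotopy_homotopy}), and get the non-Anosov conclusion from Theorem~\ref{thm_anosov_nil}. The paper's proof is exactly this and is very short: the hypotheses on $M$ are there to meet the hypotheses of Proposition~5 of~\cite{FG2} (Proposition~4 when $p=2$), which directly supplies a class $[\alpha]\in\pi_{2p-4}(\textup{Diff}_0(M))$ whose image in $\pi_{2p-4}(\textup{Top}_0(M))$ is non-zero; clutching with $\alpha$ gives a bundle over $\S^{2p-3}$ that is not topologically trivial, while the image of $[\alpha]$ in $\pi_{2p-4}(G_0(M))$ vanishes for $p>2$ because that group is trivial.

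Two concrete points in your sketch need repair. First, the degrees are off by one: a bundle over $\S^{2p-3}$ is clutched by a map $\S^{2p-4}\to\textup{Diff}(M)$, so the class must lie in $\pi_{2p-4}(\textup{Diff}_0(M))$, not in $\pi_{2p-3}(\textup{Diff}(M,\partial))$ (note also that $M$ is closed, so ``$\partial$'' plays no role); compare the proof of Theorem~\ref{thm_2}, where the pseudo-isotopy input likewise sits in degree $2p-4$. Second, your contradiction scheme for non-triviality is unnecessary and misdirected: once the image of $[\alpha]$ in $\pi_{2p-4}(\textup{Top}_0(M))$ is known to be non-zero, the clutched bundle cannot be trivial as a $\textup{Top}(M)$-bundle by the clutching classification alone, and Hsiang--Wall rigidity~\cite{HW} (used in Proposition~\ref{prop_anbundle1} to compare smoothings of $\S^d\times\T^n$) has no role here, all the more since the fiber is a general infranilmanifold rather than a torus. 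Relatedly, the ``principal obstacle'' you identify---topological, not merely smooth, detection of the class on $M$---is not a step you can leave to be arranged: it is precisely the content of the quoted Propositions~4 and~5 of~\cite{FG2}, and it is where the affine Anosov hypothesis (which your outline never uses), $b_1(M)\neq 0$, and $\dim M\ge 10$ enter. With these corrections your argument coincides with the paper's.
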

 
 For the proof of Theorem~\ref{thm_212} we need to recall the clutching construction which we use as a way to create fiber bundles over spheres.  Let $\S^d$, $d\ge 2$, be a sphere, let $N$ be a closed manifold and let $\alpha\colon \S^{d-1}\to\ \textup{Diff}(N)$ be a map. Let $\DD^d_-$ and $\DD^d_+$ be the southern and the northern hemispheres of $\S^d$, respectively. Consider manifolds $\DD^d_-\times N$ and $\DD^d_+\times N$ as trivial fiber bundles with fiber $N$. Note that there boundaries are both diffeomorphic to $\S^{d-1}\times N$. 
 
 Now define diffeomorphism $\bar\alpha\colon \S^{d-1}\times N\to \S^{d-1}\times N$ by the formula 
 $$(x,y)\mapsto (x,\alpha(x)(y)).$$
  {\it Clutching with $\alpha$} amounts to pasting together the boundaries of $\DD^d_-\times N$ and $\DD^d_+\times N$ using $\bar\alpha$. The resulting manifold $E$ is naturally a total space of smooth fiber bundle $N\to E\to\S^d$ because diffeomorphism $\bar\alpha$ maps fibers to fibers.

\begin{proof}
 We first consider the case when $p>2$. The assumptions (on the infranilmanifold $M$) of Proposition~5 of~\cite{FG2} are satisfied and we obtain a class $[\alpha]\in\pi_{2p-4}(\textup{Diff}_0(M))$ whose image under the natural inclusion $\pi_{2p-4}(\textup{Diff}_0(M))\to \pi_{2p-4}(\textup{Top}_0(M))$ is non-zero. 
 Then clutching with $\alpha\colon\S^{2p-4}\to \textup{Diff}_0(M)$ yields a smooth bundle $M\to E\to \S^{2p-3}$ which is not topologically trivial. 
 
 On the other hand, by~(\ref{eq_homotopy_homotopy}), the image of $\alpha$ in $\pi_{2p-4}(G_0(M))$ is zero. Hence  $M\to E\to \S^{2p-3}$ is fiber homotopically trivial. 
 
 The case when $p=2$ is analogous and uses Proposition~4 of~\cite{FG2}.
\end{proof}

\section{Bundles admitting fiberwise Anosov flows}
\label{sec_flows}
Here we generalize Theorems~\ref{thm_main} and~\ref{thm_main_general} to the setting of abstract Anosov flows.
\begin{theorem}
\label{thm_anosov_flows_sc}
Let $X$ be a  closed simply connected manifold or a finite simply connected simplicial complex and let $p\colon E\to X$ be
a fiber bundle whose fiber $M$ is a closed manifold. Assume that $E$ admits a $C^\infty$ flow which leaves the fiber $M_x$, $x\in X$, invariant and the restrictions of this flow to the fibers $g_x^t\colon M_x\to M_x$, $x\in X$, are transitive Anosov flows. Then the bundle $p\colon E\to X$ is topologically trivial.
\end{theorem}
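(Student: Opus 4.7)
The plan is to follow the broad strategy laid out in the ``Scheme of the proof of topological rigidity'' and parallel the proof of Theorem~\ref{thm_anosov_sc}, handling carefully the failure of uniqueness in structural stability for flows. Fix a basepoint $x_0\in X$ and set $M=M_{x_0}$, $g^t=g^t_{x_0}$. View $p\colon E\to X$ as a topological bundle. For each $x_1\in X$, choose a small smooth local trivialization of $E$ over a neighborhood $U$ of $x_1$; this presents the family $\{g^t_x\}_{x\in U}$ as a $C^1$-continuous deformation of $g^t_{x_1}$, and structural stability for Anosov flows produces a continuous family of orbit equivalences $h_{x_1,x}\colon M_x\to M_{x_1}$ that are $C^0$-close to the identity in the local trivialization.

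As in the proof of Theorem~\ref{thm_anosov_sc}, I would compose such local orbit equivalences along a path $\gamma$ from $x_0$ to $x$ to build an orbit equivalence $h_\gamma\colon M_x\to M_{x_0}$. Unlike the diffeomorphism case, $h_\gamma$ is not canonically defined, but the non-uniqueness (which by the excerpt consists of reparametrizations $g^\beta\circ\,\cdot\,\circ\bar g^\alpha$ with $\alpha,\beta$ being $C^0$-small functions) is confined to the group $F$ of self orbit equivalences of $g^t$ that are homotopic to $id_M$. Simple connectivity of $X$ then ensures that the monodromy around any loop lies in $F$, so this procedure yields a reduction of the structure group of the underlying topological $M$-bundle from $\textup{Top}(M)$ down to $F$.

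The central obstacle is to prove that $F$ is \emph{contractible}. Transitivity of $g^t$ is crucial here: together with the density of periodic orbits and the Anosov closing lemma, it should force every self orbit equivalence of $g^t$ that is sufficiently $C^0$-close to $id_M$ to be of the form $y\mapsto g^{\tau(y)}(y)$ for a unique continuous function $\tau\colon M\to \mathbb R$ close to $0$. The space of such $\tau$ is convex and hence contractible, which gives a deformation retraction of a neighborhood of $id_M$ in $F$ onto a point; a standard argument using that $F$ is a topological group under composition then upgrades this to contractibility of all of $F$. Once $F$ is known to be contractible, its classifying space is contractible, every $F$-bundle is trivial, and so the bundle $p\colon E\to X$, now viewed as an $F$-bundle, is topologically trivial. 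The most delicate technical points I expect are the continuous assembly of the local orbit equivalences along paths in $X$ and the precise derivation of the shift-function characterization of self orbit equivalences near the identity; beyond these, the conclusion is formal.
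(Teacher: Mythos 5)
Your reduction of the structure group is roughly in the spirit of the paper, but the heart of your argument --- the contractibility of $F$ --- has a genuine gap, in two ways. First, the local statement you prove (every self orbit equivalence $C^0$-close to $id_M$ is a shift $y\mapsto g^{\tau(y)}(y)$, which indeed follows from expansivity and is the uniqueness-up-to-sliding-along-orbits statement the paper quotes from~\cite{PSW}) only gives local contractibility of $F$ at the identity, and there is no ``standard argument'' upgrading local contractibility of a topological group to contractibility: Lie groups and discrete groups are locally contractible and usually not contractible. Second, and more fundamentally, the group you work with --- self orbit equivalences of $g^t$ that are merely \emph{homotopic} to $id_M$ --- is not known to be contractible for a general transitive Anosov flow: an element far from the identity may permute distinct freely homotopic periodic orbits, and such an element is not a flow shift and need not lie in the identity component. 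This is precisely why Theorem~\ref{thm_anosov_flows} carries the extra hypothesis that there are no freely homotopic periodic orbits, a hypothesis the authors explicitly do not know how to remove, and which is absent from the statement you are proving.

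The paper's proof of Theorem~\ref{thm_anosov_flows_sc} deals with this differently, and this is where simple connectivity is really used. The structure group is first reduced to the group $F$ of self orbit conjugacies satisfying the regularity conditions P3--P4 (bi-H\"older, differentiable along the flow with H\"older derivative), via a principal bundle $\mathcal F(X)\to X$ built from parametrized structural stability (\cite{KKPW},~\cite{LMM}). Then one passes to the space of \emph{markings} of orbits: the assembled marking spaces form a covering space $\EuScript M(X)\to X$, and $\pi_1(X)=1$ forces each component to be a one-sheeted cover, which cuts the structure group down to the subgroup $\bar F$ of those $\varphi$ inducing the trivial marking, i.e.\ $\varphi(\gamma)=\gamma$ for \emph{every} orbit $\gamma$. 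Only for this smaller group is contractibility proved, and the proof is global rather than local-convexity near the identity: one integrates $(\varphi^{-1})'-1$ over periodic orbits, applies the Livshitz theorem (this is where transitivity and the H\"older regularity P3--P4 are essential), and invokes the Jungreis--Hirsch rigidity of centralizers of Anosov flows~\cite{JH} to conclude $\varphi=g^{\beta_\varphi}$ for a continuous $\beta_\varphi$ depending continuously on $\varphi$; the retraction $\Delta_s(\varphi)=g^{(1-s)\beta_\varphi}$ then contracts $\bar F$. To repair your proposal you would need to replace your ``upgrade'' step by an argument of this kind, and in particular use $\pi_1(X)=1$ to arrange that the monodromy fixes every orbit, not merely that it is homotopic to the identity.
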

\begin{theorem}
\label{thm_anosov_flows}
Let $X$ be a closed manifold or finite simplicial complex and $p\colon E\to X$ be a fiber homotopically trivial bundle whose fiber $M$ is a closed manifold. Assume that $E$ admits a $C^\infty$ flow which leaves the fibers $M_x$, $x\in X$, invariant and the restrictions of this flow to the fibers $g_x^t\colon M_x\to M_x$, $x\in X$, satisfy the following conditions 
\begin{enumerate}
\item  flows $g_x^t\colon M_x\to M_x$ are transitive Anosov flows;
\item flows $g_x^t\colon M_x\to M_x$ do not have freely homotopic periodic orbits.
\end{enumerate} 
Then the bundle $p\colon E\to X$ is topologically trivial.
\end{theorem}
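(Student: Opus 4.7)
The plan is to follow the general scheme outlined after the structural stability statements: enlarge the structure group of $p\colon E\to X$ to $\textup{Top}(M)$, use structural stability of Anosov flows to reduce it to a small neighborhood of the identity inside the group of orbit self-equivalences of the abstract fiber flow $g^t\colon M \to M$, and then show that this subgroup is contractible, so that the bundle is trivial as a $\textup{Top}(M)$-bundle.

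First I would fix $x_0 \in X$, identify $(M, g^t) = (M_{x_0}, g_{x_0}^t)$, and let $q\colon E\to M$ be the given fiber homotopy trivialization. For each $x_1 \in X$, structural stability of the Anosov flow $g_{x_1}^t$ yields a neighborhood $V_{x_1} \subset X$ and a continuous family of orbit equivalences $\phi_{x_1, x}\colon M_{x_1} \to M_x$, $x \in V_{x_1}$, with $\phi_{x_1, x_1} = \textup{id}_{M_{x_1}}$. Combined with $q$ to identify each fiber with $M$, these produce local trivializations of $E$ whose transition functions are orbit self-equivalences of $(M, g^t)$, arbitrarily $C^0$-close to $\textup{id}_M$ after refining the cover. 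The fiber homotopy trivialization plays the role here that simple connectivity plays in the proof of Theorem~\ref{thm_anosov_sc}, ensuring that this reduction of structure group can be performed coherently over all of $X$.

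The heart of the argument is the contractibility of the subgroup $F \subset \textup{Top}(M)$ consisting of orbit self-equivalences of $(M, g^t)$ that are $C^0$-close to $\textup{id}_M$ (hence homotopic to $\textup{id}_M$). Let $h \in F$. For each periodic orbit $\gamma$ of $g^t$, the image $h(\gamma)$ is a periodic orbit of $g^t$ that is freely homotopic to $\gamma$; hypothesis (2) forces $h(\gamma) = \gamma$ as an oriented set. By hypothesis (1) periodic orbits are dense in $M$, so by continuity $h(O) = O$ as an oriented set for every orbit $O$ of $g^t$. Consequently, for each $z \in M$ there is a unique small $\tau_h(z) \in \mathbb R$ with $h(z) = g^{\tau_h(z)}(z)$, and $\tau_h$ depends continuously on $z$ and on $h$. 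The correspondence $h \mapsto \tau_h$ identifies $F$ with an open convex neighborhood of $0$ in $C^0(M; \mathbb R)$, and the linear homotopy
\[
h_s(z) \;=\; g^{(1-s)\tau_h(z)}(z), \qquad s \in [0,1],
\]
contracts $F$ to $\{\textup{id}_M\}$ through orbit equivalences, since each $h_s$ lies in $F$ with $h_0 = h$ and $h_1 = \textup{id}_M$.

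Once the structure group has been reduced to the contractible group $F$, the bundle admits a global section, yielding the desired topological trivialization $r\colon E \to M$. The main obstacle is the global coherence of the reduction in the second paragraph: structural stability only provides local orbit equivalences, and one needs the fiber homotopy trivialization $q$ to align them across $X$ so that the induced transition functions actually land in the small neighborhood $F$ on which the time-shift description of the third paragraph is valid. A secondary delicate point is the uniqueness and continuity of $\tau_h$ for $h$ near the identity---on a periodic orbit of period $T$ the number $\tau_h$ is a priori only defined modulo $T$, but the $C^0$-smallness of $h$ singles out a unique small continuous representative, which is exactly what makes the contraction work.
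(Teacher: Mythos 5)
Your overall scheme (reduce the structure group via structural stability, then contract the resulting group of self orbit equivalences) matches the paper's, but the core of your argument has a genuine gap: the group you contract is not the structure group you actually obtain. The set of orbit self-equivalences of $(M,g^t)$ that are $C^0$-close to $\mathrm{id}_M$ is not a group (compositions of nearly-identity maps drift away from the identity), and the transition functions produced by chaining local structural-stability conjugacies over a non-simply-connected base are constrained by the fiber homotopy trivialization $q$ only to be \emph{homotopic} to $\mathrm{id}_M$, not $C^0$-small. So the group $F$ that must be shown contractible consists of \emph{all} self orbit equivalences homotopic to the identity (in the paper, with additional bi-H\"older regularity and differentiability along the flow direction, properties P3--P4), and your "unique small $\tau_h(z)$" device is unavailable: an element of $F$ fixing every orbit can a priori shift points by large times, and on a dense orbit a small displacement in $M$ does not force a small time shift, since the orbit returns arbitrarily close to itself.

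Two of your intermediate steps also do not hold as stated. First, "$h$ fixes all periodic orbits, periodic orbits are dense, hence by continuity $h$ fixes every orbit" is not a valid deduction: orbits are not closed, the orbit space of an Anosov flow is non-Hausdorff, and a map fixing a dense family of orbits need not fix all of them by a naive limit argument. Second, even granting that every orbit is fixed, producing a \emph{continuous} function $\beta_\varphi$ with $\varphi=g^{\beta_\varphi}$, and showing that $\varphi\mapsto\beta_\varphi$ is continuous (needed for your homotopy to contract $F$ continuously), is exactly where the real work lies. The paper gets both via a chain you omit entirely: the regularity P4 plus the Newton--Leibniz formula shows the integral of $(\varphi^{-1})'-1$ over every periodic orbit vanishes; transitivity (assumption 1) allows the Livshitz Theorem to solve the cohomological equation and deform $\varphi$ along orbits to a true self-conjugacy fixing all periodic orbits; the Jungreis--Hirsch rigidity of centralizers of Anosov flows~\cite{JH} then identifies that self-conjugacy with some $g^{t_0}$, yielding $\varphi=g^{\beta_\varphi}$; and a separate local-product-structure argument (Claim~\ref{claim2}) establishes continuity of $\varphi\mapsto\beta_\varphi$. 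Without these ingredients---in particular without imposing the H\"older/differentiability-along-$Y$ conditions on the elements of $F_x$ that make Livshitz applicable---the contraction in your third paragraph is not justified.
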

\begin{remark}
  We only consider free homotopies of periodic orbits that preserve flow direction of the orbits. This way geodesic flows on negatively curved manifolds do not have freely homotopic periodic orbits.
\end{remark}
\begin{remark}
 By applying structural stability we see that the flows $g_x^t$ are all orbit equivalent. Hence if assumption 2 of Theorem~\ref{thm_anosov_flows} holds for one $x_0\in X$ then it automatically holds for all $x\in X$.
\end{remark}

Theorem~\ref{thm_anosov_flows_sc} obviously implies~Theorem~\ref{thm_main}.
Let us also explain how Theorem~\ref{thm_anosov_flows} implies Theorem~\ref{thm_main_general}. The associated sphere bundle $S(p)\colon SE\to X$ is fiber homotopically trivial by Remark~\ref{remark_hom_triv}. The total space $SE$ is equipped with fiberwise geodesic flow $g^t$. The flows on the fibers $g_x^t$, $x\in X$, are Anosov. Moreover they are transitive because they are ergodic with respect to fully supported Liouville measures by the work of Anosov~\cite{An}. Finally, assumption~2 of Theorem~\ref{thm_anosov_flows} is satisfied because each free homotopy class of loops on a negatively curved manifolds contains only one closed geodesic (which yields two periodic orbits which are not homotopic as oriented periodic orbits).

Another example of Anosov flow which satisfies assumption~2 of Theorem~\ref{thm_anosov_flows} is the suspension flow of an Anosov diffeomorphism of an infranilmanifold. 

\begin{theorem}\label{thm_example_flow} For any $d\ge 2$ there exists a smoothly non-trivial fiber bundle $M\to E'\stackrel{p'}{\to}\S^d$ which admits fiberwise Anosov flow which satisfies the assumptions of Theorem~\ref{thm_anosov_flows}.
\end{theorem}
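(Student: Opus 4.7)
The plan is to adapt the construction of Theorem~\ref{thm_example} to the flow setting. I fix an Anosov automorphism $L\colon\T^n\to\T^n$ with $k$-dimensional unstable distribution, take $M=\T^n_L$ to be its mapping torus, and let $g^t$ be the suspension Anosov flow on $M$. For each $d\geq 2$ I choose $n$ and $k$ so that the Gromoll subgroup $\Gamma^{n+d+1}_{k+d+1}$ is non-trivial; this is achievable by the results of~\cite{ABK}. Mimicking Section~\ref{sec_bundle_cnstr}, I pick a flowbox $U\simeq\DD^k\times\DD^{n-k}\times\DD^1\subset M$ around a closed orbit of $g^t$ (with the $\DD^1$-factor being the flow direction), arranged so that $U, g^1(U), g^2(U), \ldots$ are pairwise disjoint, fix an embedding $\DD^{d-1}\hookrightarrow\S^{d-1}$, form the product embedding $i\colon\DD^{d-1}\times U\hookrightarrow\S^{d-1}\times M$, and pick $\alpha\in\textup{Diff}_{k+d}(\DD^{n+d},\partial)$ representing a non-trivial element of $\Gamma^{n+d+1}_{k+d+1}$ whose preserved first $k+d$ coordinates correspond to $\DD^{d-1}\times\DD^k\times\DD^1$ (base, strong unstable, and flow directions). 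Clutching $\DD^d_-\times M$ and $\DD^d_+\times M$ via $\alpha_1=i\circ\alpha\circ i^{-1}$ extended by the identity yields a smooth bundle $M\to E'\stackrel{p'}{\to}\S^d$, and its smooth non-triviality follows from the exact analogue of Proposition~\ref{prop_anbundle1} with $\T^n$ replaced by $M$: since $M$ is an aspherical stably parallelizable nilmanifold, the Hsiang-Wall rigidity theorem applied to $\DD^d\times M$ (valid because $\dim M+d\geq 5$) combined with the vanishing $\pi_i G(M)=0$ for $i\geq 1$ justified below rules out a smooth trivialization by the usual smoothing-theoretic contradiction.

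For the fiberwise Anosov flow I mimic Section~\ref{sec_const} and Proposition~\ref{prop_anbundle2}. On $\DD^d_-\times M$ the fiberwise flow is the product $\bar g^t$. To extend across the clutching, I choose a smooth isotopy $i_r$, $r\in[0,1]$, of embeddings with $i_1=i$, $i_0=\bar g^{t_0}\circ i$ for a suitable $t_0$, keeping $i_r(x,\cdot,\cdot,\cdot)$ a local flowbox chart for $g^t$, and define the fiberwise generator over $(r,\varphi)\in\DD^d_+$ as $X_{(r,\varphi)}=(\alpha_r(\varphi,\cdot))_*X$, where $X$ generates $g^t$ and $\alpha_r=i_r\circ\alpha\circ i_r^{-1}$. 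Because $\alpha$ preserves the flow coordinate $\DD^1$, this vector field matches the clutched boundary and reduces to $X$ at the origin. The scaling trick of Proposition~\ref{prop_anbundle2}, namely replacing $i$ and $i_r$ by their rescalings $i^m,i^m_r$ after scaling the metric on $M$ by a large factor $m$, then guarantees that the return time of the fiberwise flow to $\bigcup_r\textup{Im}(i^m_r)$ is large, so that the strong unstable foliation $\W_{g^t}^{su}$ is uniformly expanded; a standard cone argument supplies the transverse strong stable distribution.

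To verify the hypotheses of Theorem~\ref{thm_anosov_flows}, I first check that $\pi_1(M)=\Z\ltimes_L\Z^n$ has trivial center: if $(m,s)$ commutes with every $(0,s')$ then $(L^m-I)s'=0$ for all $s'\in\Z^n$, forcing $m=0$ by hyperbolicity of $L$, and then $(0,s)$ is central iff $L^{m'}s=s$ for all $m'$, forcing $s=0$. Together with asphericity of $M$ this gives $\pi_i G(M)=0$ for $i\geq 1$, and the argument of Proposition~\ref{prop_trivialization} yields fiber homotopical triviality of $p'$. By structural stability each fiberwise flow is orbit equivalent to $g^t$, hence transitive Anosov. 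A periodic orbit of $g^t$ of period $m$ through $p\in\T^n$ lifts in the universal cover $\R^n\times\R$ to an element of the form $(m,(I-L^m)\tilde p)\in\pi_1(M)$, and a direct conjugation calculation in the semidirect product shows that the induced assignment from $\langle L\rangle$-orbits of period-$m$ periodic points of $L$ to conjugacy classes projecting to $m\in\Z$ is a bijection; hence no two periodic orbits of any fiberwise flow are freely homotopic. The main obstacle is the flow-extension step, where the scaling argument must be arranged carefully so that the fiberwise vector field has no equilibria and integrates to a genuine $C^\infty$ Anosov flow, paralleling the delicate cone estimates of Proposition~\ref{prop_anbundle2} with the additional constraint that the explicit orbit direction is respected throughout.
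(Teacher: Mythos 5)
You take a genuinely different route from the paper, and it stumbles exactly at the step you yourself flag as the main obstacle, so let me focus there. Your fiberwise generator $X_{(r,\varphi)}=(\alpha_r(\varphi,\cdot))_*X$ does match the clutching at $r=1$, but it does not ``reduce to $X$ at the origin.'' Writing $\alpha_0=\bar g^{t_0}\circ\alpha_1\circ\bar g^{-t_0}$ (your choice $i_0=\bar g^{t_0}\circ i$) and using $(\bar g^{-t_0})_*X=X$, one gets $(\alpha_0(\varphi,\cdot))_*X=(\bar g^{t_0})_*\bigl((\alpha_1(\varphi,\cdot))_*X\bigr)$, which differs from $X$ on $\bar g^{t_0}(\textup{Im}\,i)$ and still depends on $\varphi$; hence the field is not even well defined at the center of $\DD^d_+$, where all values of $\varphi$ collapse. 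The source of the error is the phrase ``because $\alpha$ preserves the flow coordinate'': a Gromoll diffeomorphism $(x,y)\mapsto(x,\alpha_x(y))$ fixes the $\DD^1$-coordinate but does not commute with translation in it, so it does not preserve the generating vector field; indeed, if $\alpha_x(y)$ were independent of the flow coordinate and equal to the identity near the boundary, $\alpha$ would be the identity, so no nontrivial Gromoll element commutes with the local flow. The cancellation that rescues the diffeomorphism case, $\alpha_r\circ\bar L\circ\alpha_1^{-1}=\bar L$ for $r$ near $0$, is an asymmetric composition with no analogue at the level of generators (one cannot compose a flow ``on one side only''), and even if a well-defined interpolation were produced, the interpolating fields are no longer conjugates of $X$, so the Anosov property would require a new cone argument in which the perturbation respects the weak-unstable foliation --- a constraint in tension with which coordinates a Gromoll diffeomorphism can preserve. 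As written, the construction of the fiberwise Anosov flow therefore fails.

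The paper sidesteps this entirely: it takes the fiberwise Anosov diffeomorphism $f\colon E_\alpha\to E_\alpha$ already built for Theorem~\ref{thm_example}, forms the fiberwise mapping torus $E_f\to\S^d$ (with fiber $\T^n_f$), and uses the suspension flow, for which transitivity and the absence of freely homotopic periodic orbits are immediate --- your semidirect-product computation is essentially the verification of the latter, and that part of your write-up is fine. The price is that smooth nontriviality of $E_f\to\S^d$ no longer follows from a connected-sum/smoothing argument on the nose: the paper proves it by lifting a hypothetical fiberwise trivialization to the $\Z^n$-covers, producing an $h$-cobordism between $E_\alpha$ and $\S^d\times\T^n$, and applying the $s$-cobordism theorem to manufacture a weak smooth trivialization of $E_\alpha\to\S^d$, contradicting Addendum~\ref{add_weak_trivial}. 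If you want to salvage your direct route, note also that your fiber is a solvmanifold (the mapping torus of a hyperbolic automorphism is not a nilmanifold), so the rigidity input in your analogue of Proposition~\ref{prop_anbundle1} must come from Farrell--Hsiang/Farrell--Jones rigidity for poly-$\Z$ fundamental groups rather than Hsiang--Wall; that is available in the literature, but the missing mechanism for extending the flow over the disk is the real gap.
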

This result is analogous to (and is based on) Theorem~\ref{thm_example}. 

For expository reasons we present the proof of Theorem~\ref{thm_anosov_flows} ahead of the proof of Theorem~\ref{thm_anosov_flows_sc}. Then we complete this section with the proof Theorem~\ref{thm_example_flow}.

\begin{proof}[Proof of Theorem~\ref{thm_anosov_flows}]
 Let $q\colon E\to M$ be a fiber homotopy trivialization. We identify $M$ with a particular fiber $M_{x_0}=p^{-1}(x_0)$. Then we can assume without loss of generality that $q|_{M}\colon M\to M$ is homotopic to identity. We will abbreviate the notation for the Anosov flow $g^t_{x_0}$ on $M_{x_0}=M$ to simply $g^t$. We use $Y$ to denote the Anosov vector field on $M$ defined by
 $$
 Y(a)=\frac{dg^t(a)}{dt}\Bigg{|}_{t=0},\;\; a\in M.
 $$
 Similarly $Y_x$ denotes the generating vector field for $g_x^t$, $x\in X$.
 
 For each $x\in X$ consider the space $F_x$ of all homeomorphisms $\varphi\colon M\to M_x$ which satisfy the following properties. 
 \renewcommand\labelenumi{P\theenumi.}
 \begin{enumerate}
 \item $\varphi\colon M\to M_x$ is an orbit conjugacy between $g^t$ and $g^t_x$, which preserves the flow direction;
 \item the map $q\circ \varphi$ is homotopic to $id_M$;
 \item $\varphi\colon M\to M_x$ is bi-H\"older continuous, \ie both $\varphi$ and $\varphi^{-1}$ are H\"older continuous with some positive H\"older exponent (which can depend on $\varphi$);
 \item $\varphi\colon M\to M_x$ is differentiable along $Y$; moreover, the derivative $\varphi'\colon M\to R$ along $Y$ defined by
 $$
 D\varphi(a)Y(a)=\varphi'(a)Y_x(\varphi(a))
 $$
 is a positive H\"older continuous function (whose H\"older exponent depends on $\varphi$). 
 \end{enumerate}
 \begin{remark}\label{remark_inverse}
 Note that if $\varphi\in F_x$ then $\varphi^{-1}\colon M_x\to M$ is differentiable along $Y_x$. Moreover the derivative of $\varphi^{-1}$ along $Y_x$ is given by $1/\varphi'\circ\varphi^{-1}$ and hence is also H\"older continuous.
 \end{remark}
In particular, we have defined the space $F\stackrel{\mathrm{def}}{=}F_{x_0}$ of self orbit conjugacies of $g^t$ that are homotopic to identity and satisfy regularity properties P3 and P4. It is routine to check that $F$ is a non-empty group when equipped with composition operation. By the same routine check $F$ acts on $F_x$ by pre-composition.
\begin{lemma}\label{lemma1}
The action of $F$ on the space $F_x$ is free and transitive for each $x\in X$.
\end{lemma}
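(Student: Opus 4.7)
The plan is to verify freeness and transitivity separately. Freeness is immediate: if $\varphi \in F_x$ and $h \in F$ satisfy $\varphi \circ h = \varphi$, then composing with $\varphi^{-1}$ on the left gives $h = id_M$, which is the identity of $F$. So I would concentrate on transitivity.

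For transitivity, given $\varphi_1, \varphi_2 \in F_x$, the natural candidate is $h := \varphi_1^{-1} \circ \varphi_2 \colon M \to M$, which tautologically satisfies $\varphi_1 \circ h = \varphi_2$, so the task reduces to showing that $h$ lies in $F$, i.e., satisfies P1--P4. Three of these are formal. For P1, $\varphi_2$ sends $g^t$-orbits to $g^t_x$-orbits preserving direction and $\varphi_1^{-1}$ sends $g^t_x$-orbits back to $g^t$-orbits preserving direction, so the composite is a direction-preserving self orbit conjugacy of $g^t$. Property P3 is clear, since composition of bi-H\"older homeomorphisms is bi-H\"older. For P4, Remark~\ref{remark_inverse} tells us that $\varphi_1^{-1}$ is differentiable along $Y_x$ with positive H\"older derivative $1/(\varphi_1' \circ \varphi_1^{-1})$; combining with the differentiability of $\varphi_2$ along $Y$ and applying the chain rule yields $h'(a) = \varphi_2'(a)/\varphi_1'(h(a))$, which is positive and H\"older (ratios and compositions of positive H\"older functions on a compact space are again positive H\"older).

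The main obstacle is verifying P2, i.e.\ $q \circ h \simeq id_M$, and this is the step that genuinely invokes the fiber-homotopy triviality of $p$. Since $q|_{M_x} \colon M_x \to M$ is a homotopy equivalence, it admits a homotopy inverse $\sigma \colon M \to M_x$. Applying P2 for $\varphi_1, \varphi_2$, namely $q|_{M_x} \circ \varphi_i \simeq id_M$, and composing on the left with $\sigma$, gives $\varphi_i \simeq \sigma \circ q|_{M_x} \circ \varphi_i \simeq \sigma$, so $\varphi_1 \simeq \varphi_2$ as maps $M \to M_x$. Post-composing this homotopy by $\varphi_1^{-1}$ on the left gives $h = \varphi_1^{-1} \circ \varphi_2 \simeq id_M$, and since $q|_M \simeq id_M$ by our normalization of $q$, we conclude $q \circ h = q|_M \circ h \simeq h \simeq id_M$, as required.

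I note that the no-freely-homotopic-periodic-orbits hypothesis of Theorem~\ref{thm_anosov_flows} does not appear necessary for Lemma~\ref{lemma1} itself; it is presumably used elsewhere in the argument (e.g.\ to produce or to control local continuity of elements of $F_x$ coming from structural stability), rather than in establishing that $F$ acts simply transitively on $F_x$ once these spaces have been defined.
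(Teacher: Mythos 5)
Your verification of freeness and of the fact that any two elements of $F_x$ lie in the same $F$-orbit is correct and essentially coincides with the paper's argument: the paper also reduces transitivity to checking that $\varphi_x^{-1}\circ\psi$ satisfies P1--P4, treats P1, P3, P4 as formal (your chain-rule computation $h'=\varphi_2'/(\varphi_1'\circ h)$, via Remark~\ref{remark_inverse}, is the intended justification), and verifies P2 by a homotopy chain using a homotopy inverse of $q|_{M_x}$, just as you do. Your closing observation that the no-freely-homotopic-periodic-orbits hypothesis plays no role here is also correct; it enters only later, in the contractibility of $F$ (Lemma~\ref{lemma4}).

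The genuine gap is that you never show $F_x\neq\emptyset$, and in the paper this is the bulk of the proof and its only substantive content. ``Free and transitive'' is used here in the torsor sense, and the statement feeds into Lemma~\ref{lemma3}, whose local trivializations are the maps $(z,\varphi)\mapsto\xi_z\circ\varphi_x\circ\varphi$ and hence require a chosen element $\varphi_x\in F_x$ for every $x$; with possibly empty fibers the principal-bundle claim collapses, so the existence statement cannot be waved away as vacuous. Producing $\varphi_x$ is not formal: one joins $x_0$ to $x$ by a path $\gamma$, uses structural stability to get, near each parameter value, orbit conjugacies $\xi_{s,r}$ between $g^t_{\gamma(s)}$ and $g^t_{\gamma(r)}$ that are $C^0$-close to the identity, and concatenates finitely many of them by compactness of $[0,1]$; the delicate point is that these conjugacies must be chosen bi-H\"older, differentiable along the flow direction, and with H\"older derivative along the flow, which the paper extracts from the implicit-function-theorem proof of structural stability (\cite[Theorem A.1]{LMM}, \cite[Proposition 2.2]{KKPW}). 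One also checks P2 for this $\varphi_x$, which follows because the construction deforms continuously from $id_M$ along the path. Your proposal, by contrast, only establishes that $F_x$ is either empty or a single $F$-orbit, so it misses the part of the lemma that actually carries the analytic weight and that the rest of the proof of Theorem~\ref{thm_anosov_flows} depends on.
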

\begin{proof}
Fix $x\in X$. It is obvious that $F$ acts freely on $F_x$.

Now we check that $F_x$ is non-empty. Consider a path $\gamma\colon[0,1]\to X$ that connects $x_0$ to $x$. Then we have a one parameter family of Anosov flows $g^t_{\gamma(s)}$, $s\in[0,1]$. By the structural stability, each $s\in[0,1]$ has a neighborhood $\mathcal U\subset[0,1]$ such that for each $r\in\mathcal U$ the flow $g^t_{\gamma(s)}$ is orbit conjugate to $g^t_{\gamma(r)}$ via orbit conjugacy $\xi_{s,r}$ which is $C^0$ close to identity (in a chart). This orbit conjugacy is bi-H\"older and it is easy to choose it so that it is differentiable along $Y_{\gamma(s)}$. The fact that $\xi_{s,r}$ can be chosen so that the derivative of $\xi_{s,r}$ along $Y_{\gamma(s)}$ is H\"older continuous is more subtle. One way to see this is from the proof of structural stability via the implicit function theorem, see \eg~\cite[Theorem A.1]{LMM},  where continuity of the derivative of $\xi_{s,r}$ along $Y_{\gamma(s)}$ is established. The same proof goes through to yield H\"older continuity, see~\cite[Proposition 2.2]{KKPW}. (Note that at this point we do not care about dependence of $\xi_{s,r}$ on the parameter $r$.) Next, since $[0,1]$ is compact, we can find a finite sequence $0=s_0<s_1<\ldots <s_N=1$ and orbit conjugacies $\xi_{s_{i-1},s_i}$ between  $g^t_{\gamma(s_{i-1})}$ and $g^t_{\gamma(s_{i})}$, $i=1\ldots N$. Then it is clear that
$$
\varphi_x\stackrel{\mathrm{def}}{=}\xi_{s_{N-1},s_N}\circ \xi_{s_{N-2},s_{N-1}}\circ\ldots\circ  \xi_{s_{1},s_0}
$$
belongs to the space $F_x$.

We proceed to checking that $F$ acts transitively on $F_x$. Take any $\psi\in F_x$. Then, since $\psi=\varphi_x\circ(\varphi_x^{-1}\circ\psi)$, we only need to check that $\varphi_x^{-1}\circ\psi\in F$. Clearly $\varphi_x^{-1}\circ\psi$ satisfies properties P1, P3 and P4. Property P2 also holds. Indeed,
$$
\varphi_x^{-1}\circ\psi\simeq \varphi_x^{-1}\circ id_{M_x}\circ\psi \simeq \varphi_x^{-1}\circ (q|_{M_x})^{-1}\circ q|_{M_x}\circ\psi\simeq id_M\circ id_M=id_M.
$$
(Here $(q|_{M_x})^{-1}$ stands for a homotopy inverse of $q|_{M_x}$).
\end{proof}
Let
$$
\mathcal F(X)=\bigsqcup_{x\in X}F_x.
$$
We equip $\mathcal F(X)$ with $C^0$ topology using charts of $p\colon E\to X$.
Group $F$ acts on $\mathcal F(X)$ by pre-composition. Clearly this action is continuous.

\begin{lemma}[Parametrized structural stability]\label{lemma2}
For each $x\in X$ there exists a neighborhood $\mathcal U$ of $x$ and a continuous family of homeomorphisms $\xi_z\colon M_x\to M_z$, $z\in\mathcal U$, such that
 \renewcommand\labelenumi{\theenumi.}
\begin{enumerate}
\item $\xi_x=id_{M_x}$;
\item $\xi_z$ is an orbit conjugacy between $g_x^t$ and $g_z^t$ for each  $z\in\mathcal U$;
\item $\xi_z$ is bi-H\"older continuous for each $z\in \mathcal U$;
\item $\xi_z$ is differentiable along $Y_x$ for all $z\in\mathcal U$; moreover, the derivative of $\xi_z$ along $Y_x$ is H\"older continuous and depends continuously on $z$, $z\in\mathcal U$.
\end{enumerate}
\end{lemma}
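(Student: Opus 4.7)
The plan is to adapt the implicit function theorem proof of structural stability from~\cite[Theorem A.1]{LMM} to a parameter-dependent setting, then layer on the~\cite[Proposition 2.2]{KKPW} argument for H\"older regularity of the derivative along the flow. The key observation is that the LMM and KKPW constructions are contraction mapping / IFT arguments in appropriate Banach spaces, and the input data $g_z^t$ depends continuously on $z$, so the resulting fixed points inherit continuous dependence on $z$ for free.

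First I would choose a local trivialization of $p\colon E\to X$ near $x$, identifying $p^{-1}(\mathcal U)$ with $M_x\times\mathcal U$ and expressing the fiberwise generators $Y_z$ as a family of $C^\infty$ vector fields on the fixed manifold $M_x$ depending continuously on $z\in \mathcal U$ in the $C^\infty$ topology, with $Y_z|_{z=x}=Y_x$. The problem then reduces to producing self orbit conjugacies $\xi_z\colon M_x\to M_x$ satisfying the four properties on the fixed manifold $M_x$. Following~\cite{LMM}, I would seek $\xi_z$ in the form $\xi_z(a)=\exp_a(\eta_z(a))$, where $\eta_z$ is a H\"older continuous section of the complement of $\R Y_x$ in $TM_x$ with respect to some fixed background metric, and encode the orbit conjugacy condition as a functional equation $\Phi(\eta_z,\sigma_z;z)=0$ in which $\sigma_z$ is an auxiliary time-change. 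The functional $\Phi$ depends continuously on $z$ because $g_z^t$ does, and its partial differential with respect to $(\eta,\sigma)$ at the base point $(0,0;x)$ is the invertible operator already used in~\cite{LMM} to prove plain structural stability. The IFT then yields a continuous family of solutions $(\eta_z,\sigma_z)$ on a possibly smaller neighborhood of $x$, and I set $\xi_z(a)=\exp_a(\eta_z(a))$. Properties~1, 2 and~3 are immediate from this construction and the choice of function spaces.

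Property~4 is the main technical point. The LMM framework already produces differentiability of $\xi_z$ along $Y_x$ with continuous derivative; the upgrade to H\"older continuity of this derivative is the content of~\cite[Proposition 2.2]{KKPW}. The step I would have to check most carefully is that the KKPW fixed-point argument runs simultaneously for all $z\in \mathcal U$, yielding a uniform H\"older exponent and continuous dependence of the H\"older derivative on $z$. This uniformity follows by shrinking $\mathcal U$ so that the hyperbolic splittings and expansion/contraction rates of $g_z^t$ vary by an arbitrarily small amount, which forces the contraction constants of the parameterized fixed-point operators to stay bounded away from $1$ uniformly in $z$; standard perturbation estimates in the appropriate H\"older norm then give continuity in $z$ of the solution, and hence of the derivative of $\xi_z$ along $Y_x$.
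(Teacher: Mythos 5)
Your proposal follows essentially the same route as the paper: reduce via a bundle chart to a parametrized family of Anosov flows on the fixed fiber $M_x$, then invoke the implicit function theorem proof of structural stability from~\cite[Theorem A.1]{LMM} for the continuous family of orbit conjugacies and~\cite[Proposition 2.2]{KKPW} for the H\"older continuity of the derivative along the flow and its continuous dependence on the parameter, with the one-parameter arguments adapting straightforwardly to the multi-parameter setting. The paper gives exactly this reduction and citation scheme rather than a detailed argument, so your sketch is consistent with (and slightly more explicit than) its proof.
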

\begin{remark} In fact, dependence on $z\in\mathcal U$ is smooth, but we won't need it.
\end{remark}
By using a chart about $x$ for the bundle $p\colon E\to X$ one readily reduces Lemma~\ref{lemma2} to a lemma about a smooth multi-parameter family of Anosov flows on $M_x$. In this form Lemma~\ref{lemma2}  was proved in~\cite[Proposition 2.2]{KKPW} using Moser's implicit function theorem approach to structural stability. A more detailed proof is in~\cite[Theorem A.1]{LMM}, however~\cite{LMM} does not address H\"older continuity of the derivative along the flow. Both~\cite{KKPW} and~\cite{LMM} treat one parameter families of Anosov flows but the proof can be adapted to multi-parameter setting in a straightforward way.
\begin{lemma}\label{lemma3} The map $\mathcal F(p)\colon \mathcal F(X)\to X$ that assigns to each $\varphi\in F_x\subset \mathcal F(X)$ its base-point $x$ is a locally trivial principal fiber bundle with structure group $F$.
\end{lemma}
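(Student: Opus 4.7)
The plan is to produce, for each $x\in X$, an $F$-equivariant local trivialization $\Phi_x\colon\mathcal U\times F\to \mathcal F(p)^{-1}(\mathcal U)$ over a path-connected neighborhood $\mathcal U$ of $x$. First I would apply Lemma~\ref{lemma2} to obtain the continuous family of orbit conjugacies $\xi_z\colon M_x\to M_z$, $z\in\mathcal U$, with $\xi_x=id_{M_x}$. Next, using Lemma~\ref{lemma1}, I would fix a reference element $\varphi_x\in F_x$ and set
$$
\Phi_x(z,\psi)=\xi_z\circ\varphi_x\circ\psi.
$$

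The main content is to check that $\xi_z\circ\varphi_x\circ\psi$ genuinely lies in $F_z$ for every $\psi\in F$ and every $z\in\mathcal U$. Properties P1, P3, and P4 are preserved under composition in a routine way: the composition of flow-direction-preserving orbit conjugacies is again such a conjugacy, the composition of bi-H\"older homeomorphisms is bi-H\"older, and the derivative of a composition along $Y$ factors as the product of the individual derivatives (evaluated on the respective generating vector fields), which remains H\"older continuous. The substantive point is property P2, i.e., that $q\circ\xi_z\circ\varphi_x\circ\psi\simeq id_M$. Here, continuity of $z\mapsto\xi_z$ together with $\xi_x=id_{M_x}$ supplies, after choosing a path in $\mathcal U$ from $x$ to $z$, a homotopy of maps $M\to E$ from $\varphi_x\circ\psi$ to $\xi_z\circ\varphi_x\circ\psi$; composing with $q$ gives $q\circ\xi_z\circ\varphi_x\circ\psi\simeq q\circ\varphi_x\circ\psi$. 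Since $q\circ\varphi_x\simeq id_M$ by property P2 applied to $\varphi_x\in F_x$, and since $q|_M\simeq id_M$ by the initial normalization of $q$, one concludes $q\circ\varphi_x\circ\psi\simeq\psi\simeq id_M$, the last homotopy coming from property P2 for $\psi\in F$.

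With $\Phi_x$ well-defined and landing in $F_z$ fiberwise, $F$-equivariance $\Phi_x(z,\psi\circ\sigma)=\Phi_x(z,\psi)\circ\sigma$ is immediate from the definition, and Lemma~\ref{lemma1} supplies bijectivity on each fiber (free and transitive action of $F$ on $F_z$). Continuity of $\Phi_x$ in the $C^0$-topology of $\mathcal F(X)$ follows from continuity of composition in the fibers of $p\colon E\to X$ together with the continuous dependence of $\xi_z$ on $z$; likewise the inverse $(z,\eta)\mapsto(z,\varphi_x^{-1}\circ\xi_z^{-1}\circ\eta)$ is continuous, using that $z\mapsto\xi_z^{-1}$ is $C^0$-continuous.

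The principal obstacle is the verification of property P2 outlined above, as it is the one step where all three ingredients---the parametrized family $\xi_z$ from Lemma~\ref{lemma2}, the homotopy property of the reference $\varphi_x$, and the normalization $q|_M\simeq id_M$---must be combined coherently. Everything else amounts to bookkeeping once Lemma~\ref{lemma2} and the free and transitive $F$-action from Lemma~\ref{lemma1} are in hand.
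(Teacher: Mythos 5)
Your proposal is correct and follows essentially the same route as the paper: the chart $(z,\psi)\mapsto \xi_z\circ\varphi_x\circ\psi$ over a neighborhood supplied by Lemma~\ref{lemma2}, with bijectivity on fibers from the free and transitive action in Lemma~\ref{lemma1}, continuity of the chart and its inverse from the parametrized structural stability, and equivariance by construction. Your explicit verification that $\xi_z\circ\varphi_x\circ\psi$ satisfies P1--P4 (in particular the homotopy argument for P2 using $\xi_x=id_{M_x}$ and the normalization of $q$) just spells out details the paper leaves as routine.
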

\begin{proof}
Take any $x\in X$. Then Lemma~\ref{lemma2} gives a neighborhood $\mathcal U$ and a continuous family of homeomorphisms $\xi_z\colon M_x\to M_z$, $z\in\mathcal U$. Consider a chart $\mathcal U\times F\to {\mathcal F(p)}^{-1}(\mathcal U)$ given by
$$
(z,\varphi)\mapsto \xi_z\circ\varphi_x\circ\varphi.
$$
By Lemma~\ref{lemma2} this chart is continuous. By Lemma~\ref{lemma1} it is injective and surjective. Also it is easy to see that the inverse is continuous. Finally, Lemma~\ref{lemma1} also implies that $\mathcal F(p)\colon\mathcal F(X)\to X$ is a principal bundle with respect to the fiberwise action of $F$ by pre-composition.
\end{proof}
\begin{lemma}\label{lemma4}
Group $F$ is contractible.
\end{lemma}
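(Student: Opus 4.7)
The plan is to identify $F$ with a convex subspace of a H\"older function space via a time-change coordinate, and then contract linearly. First I will show that every $\varphi\in F$ preserves each orbit of $g^t$ setwise, so that there is a well-defined real-valued function $c_\varphi\colon M\to\mathbb R$ with $\varphi(x)=g^{c_\varphi(x)}(x)$. Since $\varphi$ is an orbit equivalence, it permutes orbits; being homotopic to the identity, it sends each periodic orbit to a freely homotopic periodic orbit, which by assumption~2 of Theorem~\ref{thm_anosov_flows} must coincide with the original orbit. To extend orbit preservation to non-periodic orbits I will pass to the universal cover, lift $\varphi$ to the equivariant map $\tilde\varphi$ that is equivariantly homotopic to the identity, and argue that the induced permutation of orbits of $\tilde g^t$ fixes the lifts of the periodic orbits and is $\pi_1(M)$-equivariant, so by the rigidity of the orbit structure of a transitive Anosov flow it must be the identity. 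The equivariant homotopy also singles out a canonical $\mathbb R$-valued lift of $c_\varphi$ (not merely a residue modulo periods). Properties~P3 and P4 then translate into the statement that $c_\varphi$ is H\"older continuous and that $Y(c_\varphi)=\varphi'-1$ is H\"older continuous with $1+Y(c_\varphi)=\varphi'>0$ pointwise.

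Let $\mathcal C$ denote the set of continuous functions $c\colon M\to\mathbb R$ such that $c$ and $Y(c)$ are H\"older continuous and $1+Y(c)>0$ everywhere, equipped with the $C^0$ topology. I will verify that $\Psi\colon F\to\mathcal C$, $\Psi(\varphi)=c_\varphi$, is a homeomorphism with inverse $c\mapsto\varphi_c$, $\varphi_c(x)=g^{c(x)}(x)$; the positivity condition $1+Y(c)>0$ guarantees that $\varphi_c$ is an injective orbit equivalence, and the remaining properties P1--P4 then follow routinely. The set $\mathcal C$ is star-shaped at $0$: for any $c\in\mathcal C$ and $s\in[0,1]$ the pointwise bound $Y(c)>-1$ gives $1+sY(c)\ge\min\{1,\,1+Y(c)\}>0$, so $sc\in\mathcal C$. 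The linear homotopy $H_s(c)=sc$ therefore contracts $\mathcal C$ to $0\in\mathcal C$, and transporting this through $\Psi$ yields a continuous contraction of $F$ to the identity element $\varphi_0=\mathrm{id}_M$.

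The main obstacle is the orbit-preservation statement for non-periodic orbits. Preservation of periodic orbits is immediate from the no-freely-homotopic-periodic-orbits hypothesis, but extending this to all orbits requires the rigidity of the orbit space of a transitive Anosov flow (analogous to the rigidity results used in structural stability) and this is precisely where the full strength of the hypotheses of Theorem~\ref{thm_anosov_flows} enters in an essential way. Once orbit preservation is secured, the construction of $c_\varphi$ and verification of the homeomorphism $\Psi$ proceed with only the routine regularity checks already afforded by properties~P3--P4 and Remark~\ref{remark_inverse}.
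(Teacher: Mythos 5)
Your endgame (a linear contraction in the ``time--change'' coordinate, $c_\varphi\mapsto s\,c_\varphi$) is exactly the paper's contraction $\Delta_s(\varphi)=g^{(1-s)\beta_\varphi}$, but the way you propose to reach the coordinate itself has a genuine gap. The heart of Lemma~\ref{lemma4} is precisely the statement that every $\varphi\in F$ is of the form $\varphi=g^{\beta_\varphi}$ for a continuous $\beta_\varphi\colon M\to\R$, and this does not follow from ``$\varphi$ fixes every periodic orbit, hence by rigidity of the orbit structure it fixes every orbit.'' The hypothesis on freely homotopic periodic orbits only gives $\varphi(\gamma)=\gamma$ for periodic $\gamma$; upgrading this to all orbits cannot be done by a density argument, because the orbit space of a transitive Anosov flow is wildly non-Hausdorff, and the universal-cover version you invoke (a homeomorphism of the lifted orbit space fixing the lifts of periodic orbits must be the identity) is not an available off-the-shelf theorem: Hausdorffness and product structure of the lifted orbit space are delicate (known in dimension $3$, not in general), and even granting them you would still have to show that $\tilde\varphi$ fixes each individual lift of each periodic orbit. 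The paper's route to $\varphi=g^{\beta_\varphi}$ is genuinely dynamical and uses the regularity hypotheses P3--P4 in an essential way: the Newton--Leibniz computation shows the periodic-orbit integrals of $(\varphi^{-1})'-1$ vanish, the Livshitz theorem (transitivity plus H\"older regularity) produces $\alpha$ with $\alpha'=(\varphi^{-1})'-1$, the flow-along-orbits deformation $\varphi_s=g^{s\alpha}\circ\varphi$ turns $\varphi$ into a genuine self-conjugacy $\varphi_1$ commuting with $g^t$, and the Jungreis--Hirsch centralizer rigidity theorem~\cite{JH} then forces $\varphi_1=g^{t_0}$, whence $\varphi=g^{t_0-\alpha}$. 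In your scheme the H\"older conditions are used only to define the target function space, and the Livshitz/centralizer input is replaced by an unproved assertion; that assertion is the whole difficulty.

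A second, smaller issue: you declare the verification that $\Psi\colon\varphi\mapsto c_\varphi$ is a homeomorphism (in the $C^0$ topology) to be routine. It is not: even for a single $\varphi$ the continuous selection of $c_\varphi$ has a period ambiguity along periodic orbits (the paper sidesteps this because Livshitz hands it the globally defined H\"older function $t_0-\alpha$), and the continuity of $\varphi\mapsto\beta_\varphi$ is exactly Claim~\ref{claim2}, which the paper proves by a local-product-structure and dense-semi-orbit argument to exclude jumps of $\beta_\varphi$ by roughly a period; two maps can be $C^0$-close while their time functions differ by a large amount along an orbit that nearly closes up. So the convexity/star-shapedness observation is fine and matches the paper, but the two substantive inputs --- the representation $\varphi=g^{\beta_\varphi}$ via Livshitz and \cite{JH}, and the continuity statement of Claim~\ref{claim2} --- are missing from your proposal.
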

This lemma implies that the bundle $\mathcal F(p)\colon\mathcal F(X)\to X$ has a global (continuous) cross-section $\sigma\colon X\to \mathcal F(X)$ (and hence is a trivial bundle). Define
$
\tau\colon X\times M\to E
$
by
$$
\tau(x,a)=\sigma(x)(a).
$$
Clearly $\tau$ is a fiber preserving homeomorphism. The inverse $\tau^{-1}$ is the posited topological trivialization. Hence in order to finish the proof we only need to establish Lemma~\ref{lemma4}.\\
{\it Proof of Lemma~\ref{lemma4}.}
Let $\gamma$ be a periodic orbit of $g^t\colon M\to M$ and let $\varphi\in F$. Then $\varphi(\gamma)$ is also a periodic orbit which is homotopic to $\gamma$. Hence, by assumption 2 of the theorem, $\varphi(\gamma)=\gamma$ for each periodic orbit $\gamma$ and each $\varphi\in F$.

Pick a $\varphi\in F$ and recall that according to Remark~\ref{remark_inverse} $\varphi^{-1}$ is differentiable along $Y$ with a H\"older continuous derivative $(\varphi^{-1})'$. By applying Newton-Leibniz formula we obtain the following statement.
\begin{claim} For every periodic orbit $\gamma$ 
$$
\int_0^{T(\gamma)}(\varphi^{-1})'(\gamma(t))dt=T(\gamma),
$$
where $T(\gamma)$ is the period of $\gamma$.
\end{claim}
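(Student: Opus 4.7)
The plan is to reduce the claim to a one-dimensional change-of-variables computation on the circle $\gamma$, via the Newton-Leibniz hint given just before the claim. The paragraph preceding the claim shows that for every periodic orbit $\gamma$ and every $\varphi\in F$ we have $\varphi(\gamma)=\gamma$ as a set; the same reasoning applies to $\varphi^{-1}\in F$. Since $\varphi^{-1}$ is an orbit conjugacy preserving the flow direction, its restriction to $\gamma$ is an orientation-preserving self-homeomorphism of the topological circle $\gamma$. Parametrizing $\gamma$ by the flow, I would then produce an orientation-preserving degree-one homeomorphism $\sigma$ of $\R/T(\gamma)\Z$ such that
$$
\varphi^{-1}(\gamma(t))=\gamma(\sigma(t)) \qquad \text{for all } t\in\R.
$$

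Next I would differentiate both sides in $t$. On the right-hand side, the chain rule gives $\sigma'(t)\,Y(\gamma(\sigma(t)))$. On the left-hand side, property P4 applied to $\varphi^{-1}$ (see Remark~\ref{remark_inverse}) yields
$$
\tfrac{d}{dt}\varphi^{-1}(\gamma(t)) = D\varphi^{-1}(\gamma(t))\,Y(\gamma(t)) = (\varphi^{-1})'(\gamma(t))\,Y(\varphi^{-1}(\gamma(t))) = (\varphi^{-1})'(\gamma(t))\,Y(\gamma(\sigma(t))).
$$
Since $Y$ has no zeros on the periodic orbit $\gamma$, I can cancel the common nonzero vector $Y(\gamma(\sigma(t)))$ and obtain the pointwise identity $\sigma'(t)=(\varphi^{-1})'(\gamma(t))$. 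In particular, because $(\varphi^{-1})'$ is H\"older continuous, $\sigma$ is $C^1$ with H\"older derivative, so the fundamental theorem of calculus applies without any subtlety.

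Finally, integrating over one period and using that $\sigma$ is a degree-one circle homeomorphism,
$$
\int_0^{T(\gamma)}(\varphi^{-1})'(\gamma(t))\,dt = \int_0^{T(\gamma)}\sigma'(t)\,dt = T(\gamma),
$$
which is the claim. The only point that warrants explicit verification is that $\sigma$ really has degree one, but this is automatic: every orientation-preserving self-homeomorphism of a topological circle is isotopic to the identity and therefore of degree $+1$. (The global homotopy $\varphi\simeq \mathrm{id}_M$ from property P2 was already consumed in the preceding paragraph to force $\varphi(\gamma)=\gamma$ setwise; once this is known, no further global information is needed.) I expect no further obstacle; every remaining ingredient is either already recorded in P1--P4 or is standard one-variable calculus.
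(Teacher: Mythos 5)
Your argument is correct and is exactly what the paper's one-line appeal to the Newton--Leibniz formula intends: since $\varphi^{-1}(\gamma)=\gamma$, the restriction of $\varphi^{-1}$ to the orbit is a degree-one reparametrization $\sigma$ with $\sigma(t+T(\gamma))=\sigma(t)+T(\gamma)$, whose derivative is $(\varphi^{-1})'(\gamma(t))$, and integrating over one period gives the claim. The only cosmetic point is the order of quantifiers in your differentiation step: rather than differentiating $\gamma(\sigma(t))$ before knowing $\sigma$ is differentiable, define $\sigma$ locally as a flow-box time coordinate composed with $\varphi^{-1}\circ\gamma$, so that differentiability of $\sigma$ and the identity $\sigma'(t)=(\varphi^{-1})'(\gamma(t))$ follow directly from property P4 applied to $\varphi^{-1}$.
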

Therefore the integral of $(\varphi^{-1})'-1$ over each periodic orbit vanishes. Recall also that $g^t$ is a transitive Anosov flow. Hence we can apply the Livshitz Theorem (see, \eg~\cite[Theorem 19.2.1]{KH}) to $(\varphi^{-1})'-1$ and conclude that this function is a coboundary; that is, there exists a H\"older continuous function $\alpha\colon M\to\R$ which is differentiable along $Y$ and
\begin{equation}\label{star}
\alpha'=(\varphi^{-1})'-1,
\end{equation}
where $\alpha'$ is the derivative of $\alpha$ along $Y$.

Define the following homotopy along the orbits of $g^t$
\begin{equation}\label{2star}
\varphi_s(x)=g^{s\alpha(x)}(\varphi(x)),\;\; s\in[0,1]
\end{equation}
Clearly $\varphi_0=\varphi$. Now we calculate the derivative of $\varphi_1$ along $Y$
$$
\varphi_1'(x)=(g^{\alpha(x)})'(\varphi(x))\varphi'(x)=(1+\alpha'(\varphi(x)))\varphi'(x)\stackrel{(\ref{star})}{=}(\varphi^{-1})'(\varphi(x))\varphi'(x)=1.
$$
Thus $\varphi_1$ is time preserving self orbit conjugacy of $g^t$, \ie $\varphi_1$ is a true self conjugacy. Also it clear from~(\ref{2star}) that $\varphi_1$ and $\varphi$ coincide on the space of orbits and hence, $\varphi_1(\gamma)=\gamma$ for every periodic orbit $\gamma$. Hence we are able to apply the main result of~\cite{JH} to conclude that $\varphi_1=g^{t_0}$ for some $t_0\in\R$. 

When $s=1$ equation~(\ref{2star}) becomes
$$
g^{t_0}(x)=g^{\alpha(x)}(\varphi(x))
$$ 
or
$$
\varphi(x)=g^{t_0-\alpha(x)}(x).
$$
Hence for each $\varphi\in F$ there exists a continuous function $\beta_\varphi\colon M\to \R$ such that $\varphi=g^{\beta_\varphi}$.
\begin{claim}\label{claim2}
The map $F\ni\varphi\mapsto\beta_\varphi\in C^0(M,\R)$ is continuous.
\end{claim}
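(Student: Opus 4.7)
The plan is to reduce the continuity question to the identity of $F$, establish a dichotomy from positivity of the minimum period of the Anosov flow, rule out the ``large'' branch of the dichotomy, and conclude. First, I would use the group structure of $F$: given $\varphi_n\to\varphi_0$ in $C^0$, set $\psi_n:=\varphi_n\circ\varphi_0^{-1}\in F$. A direct computation from $\varphi_n=g^{\beta_{\varphi_n}}$ yields $\psi_n=g^{\beta_{\psi_n}}$ with $\beta_{\psi_n}=(\beta_{\varphi_n}-\beta_{\varphi_0})\circ\varphi_0^{-1}$, so that $\|\beta_{\psi_n}\|_\infty=\|\beta_{\varphi_n}-\beta_{\varphi_0}\|_\infty$ and $\|\psi_n-id_M\|_\infty=\|\varphi_n-\varphi_0\|_\infty\to 0$. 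It therefore suffices to show that $\psi_n\to id_M$ in $C^0$ implies $\beta_{\psi_n}\to 0$ in $C^0$. I would also note that $\beta_\varphi$ is uniquely determined as a continuous function: on the dense set of non-periodic points the orbit map $t\mapsto g^t(x)$ is injective, pinning $\beta_\varphi$ down there, and continuity propagates the uniqueness to all of $M$.

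Next I would exploit positivity of $T_{\min}>0$, the infimum of periods of periodic orbits of the transitive Anosov flow $g^t$ (standard: for each $T>0$ there are only finitely many periodic orbits of period at most $T$). For $\epsilon\in(0,T_{\min}/4)$, set
\[
m_\epsilon:=\min\bigl\{d(g^t(y),y):y\in M,\ \epsilon\le|t|\le T_{\min}/2\bigr\},
\]
a minimum over a compact set that is strictly positive, since a zero would yield a periodic orbit of period at most $T_{\min}/2<T_{\min}$. If $\|\psi_n-id_M\|_\infty<m_\epsilon$, then $d(g^{\beta_{\psi_n}(x)}(x),x)<m_\epsilon$ forces $\beta_{\psi_n}(x)\notin[\epsilon,T_{\min}/2]\cup[-T_{\min}/2,-\epsilon]$ for every $x$. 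Because $\beta_{\psi_n}(M)$ is the continuous image of the connected space $M$, and hence itself connected, either $\beta_{\psi_n}(M)\subset(-\epsilon,\epsilon)$ (the \emph{small} case) or $\beta_{\psi_n}(M)\cap[-T_{\min}/2,T_{\min}/2]=\emptyset$ (the \emph{large} case).

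The main obstacle is to rule out the large case. Suppose for contradiction that, along a subsequence, $\beta_{\psi_n}>T_{\min}/2$ throughout $M$. Since every element of $F$ preserves each periodic orbit of $g^t$ setwise (as established earlier in the proof of Lemma~\ref{lemma4} using hypothesis~(2) of Theorem~\ref{thm_anosov_flows}), the restriction $\psi_n|_\gamma$ to each periodic orbit $\gamma$ of period $T_\gamma$ is a circle homeomorphism $C^0$-close to $id_\gamma$. This forces $\beta_{\psi_n}|_\gamma$ to lie in a small neighborhood of some $k_\gamma T_\gamma$ with $k_\gamma$ a positive integer, the positivity of $k_\gamma$ coming from the large-case assumption; continuity of $\beta_{\psi_n}$ on the connected circle $\gamma$ makes $k_\gamma$ well-defined. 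Combining continuity of $\beta_{\psi_n}$ on $M$ with density of periodic orbits having incommensurable periods (a generic feature of transitive Anosov flows, certainly satisfied by geodesic flows of negatively curved manifolds and by suspensions of Anosov diffeomorphisms), one is expected to derive an inconsistency among the integers $k_\gamma$, yielding the contradiction. An alternative route, which I would attempt in parallel, is to revisit the Livshitz decomposition $\beta_\psi=t_0-\alpha$ obtained in the proof of Lemma~\ref{lemma4} and establish continuous dependence of $\alpha$ on $\psi$ directly through the cohomological equation.

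Once the large case is excluded, the small case supplies $\|\beta_{\psi_n}\|_\infty<\epsilon$ for all sufficiently large $n$; since $\epsilon\in(0,T_{\min}/4)$ was arbitrary we conclude $\beta_{\psi_n}\to 0$ in $C^0$, completing the proof.
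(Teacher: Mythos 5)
Your reduction to the case $\psi_n\to id_M$ is correct (the identity $\beta_{\varphi_n\circ\varphi_0^{-1}}=(\beta_{\varphi_n}-\beta_{\varphi_0})\circ\varphi_0^{-1}$, the uniqueness of $\beta_\varphi$, and the equality of the two $C^0$ distances all check out), and so is the dichotomy obtained from $m_\epsilon>0$ and connectedness of $\beta_{\psi_n}(M)$. But the whole content of Claim~\ref{claim2} sits in the exclusion of the ``large'' branch, and there you do not give a proof: you say an inconsistency among the integers $k_\gamma$ ``is expected.'' The mechanism you sketch has concrete problems. First, the localization of $\beta_{\psi_n}|_\gamma$ near $k_\gamma T_\gamma$ requires $\|\psi_n-id_M\|_\infty$ to be small compared with how closely the embedded circle $\gamma$ returns to itself inside $M$, a quantity that degenerates as $T_\gamma\to\infty$; so for a fixed $n$ you control only the finitely many short orbits. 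Second, continuity of $\beta_{\psi_n}$ imposes no relation among the values $k_\gamma T_\gamma$ on different periodic orbits: the image of $\beta_{\psi_n}$ only has to be an interval inside $(T_{\min}/2,\infty)$, and $\beta_{\psi_n}$ may interpolate freely off the (nowhere dense) union of periodic orbits; to extract an inconsistency you would need a modulus of continuity for $\beta_{\psi_n}$ uniform in $n$, which you do not have. Third, ``density of periodic orbits with incommensurable periods'' is simply false in the generality of Theorem~\ref{thm_anosov_flows}: for the suspension of an Anosov diffeomorphism with constant roof function---an example the theorem is explicitly meant to cover---all periods are integers, hence pairwise commensurable.

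The paper rules out exactly this large-discrepancy situation by a different dynamical argument, which is the ingredient your proof is missing. Assuming $d_{C^0}(\varphi,\psi)<\varepsilon$ while $|\beta_\varphi(x)-\beta_\psi(x)|>2\varepsilon$ at some $x$, one observes that $\varphi(x)$ and $\psi(x)$ lie on the same orbit but in different connected components of its intersection with a local product structure box of size $\rho$ (with $\varepsilon\lll\rho$), so the time gap is in fact larger than $\rho$; one then perturbs $x$ so that its forward orbit is dense, follows the configuration along that orbit using the uniform $\varepsilon$-closeness of $\varphi$ and $\psi$, concludes that $\psi(x)$ lies on the weak stable manifold of $\varphi(x)$ so the discrepancy to the local orbit decays along the forward orbit, and by density deduces $d_{C^0}(\beta_\varphi,\beta_\psi)<\varepsilon$ everywhere, a contradiction. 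Some argument of this type (dense orbit plus the stable foliation, or an expansivity/shadowing argument) is what your large case needs; your parallel suggestion of proving continuous dependence of the Livshitz primitive $\alpha$ on $\psi$ through the cohomological equation is likewise only a proposal. As written, the proof has a genuine gap at its central step.
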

\begin{proof}
Let $\rho>0$ be the constant associated to the local product structure of the stable and unstable foliations of $g^t$. 

Assume that $\varphi$ is a discontinuity point of the map $\varphi\mapsto\beta_\varphi$. Then there exists $\varepsilon>0$ and $\psi\in F$ such that 
\begin{equation}\label{3star}
d_{C^0}(\varphi,\psi)<\varepsilon\;\;\;\; \text{and}  \;\;\;d_{C^0}(\beta_\varphi,\beta_\psi)>2\varepsilon.
\end{equation}
We can assume that $\varepsilon\lll\rho$.

Pick a point $x\in M$ such that $|\beta_\varphi(x)-\beta_\psi(x)|>2\varepsilon$. 
Consider a neighborhood $\mathcal U$ of size $\rho$ about $\varphi(x)$.
Let $\mathcal O(x)=\{g^t(x): t\in\R\}$. Then $\psi(x)\in\mathcal O(x)\cap\mathcal U$. By~(\ref{3star}) points $\psi(x)$ and $\varphi(x)$ belong to different connected components of $\mathcal O(x)\cap\mathcal U$ as indicated on Figure~\ref{fig1}. It follows that, in fact, $|\beta_\varphi(x)-\beta_\psi(x)|>\rho$.

\begin{figure}[htbp]
\begin{center}
\includegraphics{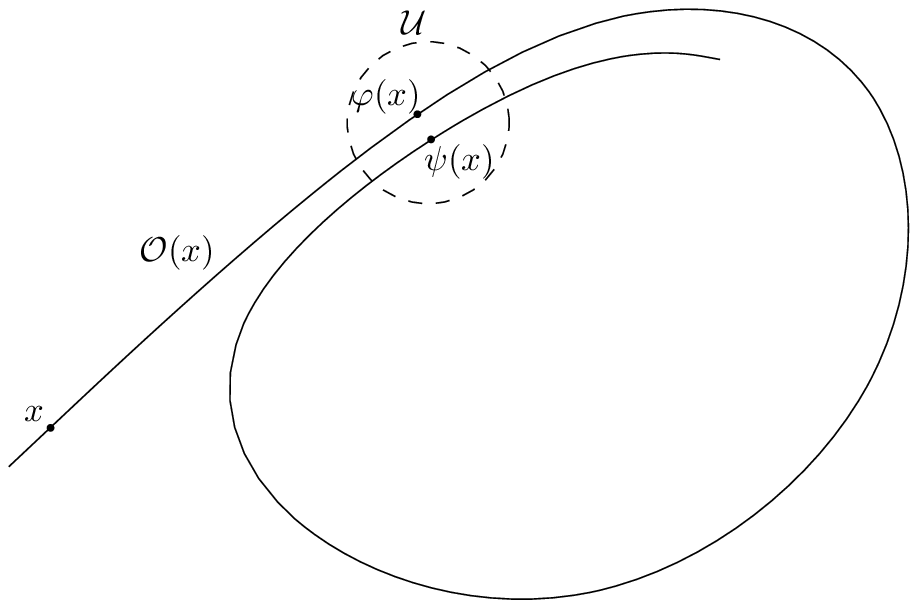}
\end{center}
 \caption{}
\label{fig1}
\end{figure}

By continuity the picture on Figure~\ref{fig1} is robust. Therefore we can perturb $x$ (we continue to denote this perturbation by $x$) to ensure that $\mathcal O^+(x)=\{g^t(x): t\ge 0\}$ is dense in $M$. 

Note that as a point $y$ moves along $\mathcal O^+(x)$ the points $\varphi(y)$ and $\psi(y)$ sweep the positive semi-orbits $\mathcal O^+(\varphi(x))$ and $\mathcal O^+(\psi(x))$ respectively. But $d(\varphi(y),\psi(y))<\varepsilon$ for all $y$. Hence the picture in the local product structure neighborhood of $\varphi(y)$ remains the same as $y$ moves along $\mathcal O^+(x)$. We conclude that $\psi(x)$ belongs to the weak stable manifold of $\varphi(x)$. Hence the distance from $\psi(y)$ to the local orbit $\{g^t(\varphi(y)): t\in(-\varepsilon, \varepsilon)\}$ goes to zero as $y$ moves along $\mathcal O^+(x)$. Because $\mathcal O^+(x)$ is dense, we have that, in fact, the distance between $\psi(z)$ and the local orbit $\{g^t(\varphi(z)): t\in(-\varepsilon, \varepsilon)\}$ is zero for all $z\in M$. This means that $d_{C^0}(\beta_\varphi,\beta_\psi)<\varepsilon$, which yields a contradiction.
\end{proof}
Define $\Delta\colon[0,1]\times F\to F$ by
$$
\Delta_s(\varphi)=g^{(1-s)\beta_\varphi}.
$$
Claim~\ref{claim2} implies that $\Delta$ is continuous. Clearly $\Delta_0=id_F$ and $\Delta_1$ maps $F$ to $id_M$. Hence $\Delta$ deformation retracts $F$ to a point. This finishes the proof of Lemma~\ref{lemma4} and, hence, the proof of Theorem~\ref{thm_anosov_flows}.
\end{proof}

\begin{proof}[Proof of Theorem~\ref{thm_anosov_flows_sc}]
A substantial part of the proof is exactly the same as in the preceding proof, yet there some very different ingredients which, in particular, allow to avoid the use of the assumption on non-homotopic periodic orbits of Theorem~\ref{thm_anosov_flows} and utilize triviality of $\pi_1(X)$ instead. In order to keep the exposition coherent we have allowed a few repetitions from the preceding proof.

As before, we fix a base-point $x_0\in X$ and identify $M$ with $M_{x_0}=p^{-1}(x_0)$. Further we abbreviate the notation for the Anosov flow $g_{x_0}^t$ on $M_{x_0}=M$ to $g^t$. We use $Y$ to denote the Anosov vector field on $M$ and $Y_x$ for the vector field which generates $g_x^t$, $x\in X$.

For each $x\in X$ consider the space $F_x$ of all homeomorphisms $\varphi\colon M\to M_x$ which satisfy the following properties. (Cf. the definition of $F_x$ in the proof of Theorem~\ref{thm_anosov_flows}.)
 \renewcommand\labelenumi{P\theenumi.}
 \begin{enumerate}
 \item $\varphi\colon M\to M_x$ is an orbit conjugacy between $g^t$ and $g^t_x$, which preserves the flow direction;
 \item[P3.] $\varphi\colon M\to M_x$ is bi-H\"older continuous, \ie both $\varphi$ and $\varphi^{-1}$ are H\"older continuous with some positive H\"older exponent (which can depend on $\varphi$);
 \item[P4.] $\varphi\colon M\to M_x$ is differentiable along $Y$; moreover, the derivative $\varphi'\colon M\to R$ along $Y$ defined by
  $$
 D\varphi(a)Y(a)=\varphi'(a)Y_x(\varphi(a))
 $$
 is a positive H\"older continuous function (whose H\"older exponent depends on $\varphi$);
 \end{enumerate}
 
Note that, in particular, we have defined the space $F\stackrel{\mathrm{def}}{=}F_{x_0}$ of certain special self orbit conjugacies of $g^t$. Following closely the arguments in the proof of Theorem~\ref{thm_anosov_flows} we obtain the following claims.
\begin{enumerate}
 \item[ 1.] Space $F$ is a non-empty group when equipped with composition operation.
\item[ 2.] The action of $F$ on the space $F_x$ by pre-composition is free and transitive for each $x\in X$.
\end{enumerate}
Now let
$$
\mathcal F(X)=\bigsqcup_{x\in X}F_x.
$$
We equip $\mathcal F(X)$ with $C^0$ topology using charts of $p\colon E\to X$. 

\medskip
\begin{enumerate}
\item[\  3.] The map $\mathcal F(p)\colon \mathcal F(X)\to X$ that assigns to each $\varphi\in F_x\subset \mathcal F(X)$ its base-point $x$ is a locally trivial principal fiber bundle with structure group $F$.
\end{enumerate}
\medskip

We call the bijection between the space of orbits of $g^t$ and the space of orbits of $g^t_x$ induced by a $\varphi\in F_x$ a {\it marking} of orbits of $g^t_x$. There is an obvious map $F_x\to \EuScript M_x$ to the space $\EuScript M_x$ of all possible markings of orbits of $g^t_x$. Let
$$
\EuScript M(X)=\bigsqcup_{x\in X}\EuScript M_x.
$$
The assembled map $mark\colon \mathcal F(X)\to \EuScript M(X)$ induces a topology on $\EuScript M(X)$. This topology can be described as follows. Markings $m_x\in\EuScript M_x$ and $m_y\in\EuScript M_y$ are close if and only if $x$ is close to $y$ and there exists an orbit conjugacy $\xi_{xy}\colon M_x\to M_y$ between $g_x^t$ and $g_y^t$ which is $C^0$ close to identity (in a chart) and takes the marking $m_x$ to the marking $m_y$. Note that, by structural stability, for any $y$ sufficiently close to $x$ there exist an orbit conjugacy $\xi_{xy}$. Moreover, among orbit conjugacies $C^0$ close to identity, $\xi_{xy}$ is unique up to homotopy that moves points a short distance along the orbits (see~\cite[Section~5]{PSW} for the details and discussion of this uniqueness property). It immediately follows that the map
$$
\EuScript M(p)\colon\EuScript M(X)\to X
$$
which takes $m\in\EuScript M_x\subset\EuScript M(X)$ to its base-point $x$ is a covering map. Because $X$ is simply connected the restriction of $\EuScript M(p)$ to each connected component of $\EuScript M(X)$ is a one sheeted covering. Denote by
$
\bar{\EuScript M}(X)
$ 
 the connected component containing the trivial self marking of $g^t$ in $\EuScript M(X)$. Let 
$$
\bar{\mathcal F}(X)=(mark)^{-1} (\bar{\EuScript M}(X))
$$
and let $\bar F$ be the preimage of the trivial self marking, \ie
$$
\bar F=\{\varphi\in F: \varphi\; \textup{induces trivial self marking of}\; g^t\}.
$$
Clearly $\bar{\mathcal F}(X)$ is open and closed subbundle of $\mathcal F(X)$ and, using the third claim above, it is straightforward to verify that $\bar{\mathcal F}(X)\to X$ is a principal fiber bundle with structure group $\bar F$.

By the definition of $\varphi \in\bar F$, we have $\varphi(\gamma)=\gamma$ for all orbits $\gamma$. 
With this information on the structure group $\bar F$  we can show that $\bar F$ is contractible and then deduce that bundle $p\colon E\to X$ is topologically trivial. The technical conditions P3 and P4 guarantee that this last step can be carried out in exactly the same way as in the proof of Theorem~\ref{thm_anosov_flows}.
\end{proof}

\begin{proof}[Proof of Theorem~\ref{thm_example_flow}]
Let $\T^n\to E\stackrel{p}{\to}\S^d$ be the smoothly non-trivial fiber bundle constructed in Section~\ref{sec_bundle_cnstr} and let $f\colon E\to E$ be the fiberwise Anosov diffeomorphism constructed in Section~\ref{sec_const} for the proof of  Theorem~\ref{thm_example}. Let $E_f$ be the mapping torus of $f$, \ie
$$
E_f=E\times[0,1]/(x,1)\sim (f(x),0).
$$
Then $E_f$ is the total space of the fiber bundle $\T^n_f\to E_f\stackrel{p_f}{\to}\S^d$ whose fibers $\T^n_{f,x}$, $x\in \S^d$, are the mapping tori of $f|_{\T^n_x}$, $x\in\S^d$. It is easy to check that the suspension flow on $E_f$ is a fiberwise Anosov flow which satisfies the assumptions of Theorem~\ref{thm_anosov_flows}. To complete the proof we will show that $p_f\colon E_f\to\S^d$ is not smoothy trivial.

The fundamental group of the mapping torus fiber $\T^n_{f,x}$ is $\Z_{\;f_*}\!\!\ltimes\Z^n$, where $f_*\colon \Z^n\to \Z^n$ is the automorphism of $\pi_1(\T^n)=\Z^n$ induced by $f|_{\T^n_x}$. The inclusion $\T^n_{f,x}\subset E_f$ induces a canonical isomorphism $\pi_1(\T^n_{f,x})\to\pi_1(E_f)$. Hence, both $\pi_1(E_f)$ and $\pi_1(\S^d\times\T^n_f)$ are canonically identified with $\Z_{\;f_*}\!\!\ltimes\Z^n$.

Now, assume to the contrary that there exists a fiber preserving diffeomorphism $\varphi\colon E_f\to \S^d\times\T^n_f$. For both, $E_f$ and $\S^d\times\T^n_f$, consider covers that correspond to the normal subgroup $\Z^n\subset\Z_{\;f_*}\!\!\ltimes\Z^n$, Clearly, the covering spaces are $E\times\R$ and $\S^d\times\T^n\times \R$, respectively.

\begin{claim} Diffeomorphism $\varphi\colon E_f\to \S^d\times\T^n_f$ admits a lifting $\tilde\varphi\colon E\times\R\to \S^d\times\T^n\times\R$.
\end{claim}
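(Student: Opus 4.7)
The plan is to apply the standard lifting criterion from covering space theory. The cover $E\times\R \to E_f$ corresponds to the normal subgroup $\Z^n \subset \Z_{\;f_\ast}\!\!\ltimes\Z^n = \pi_1(E_f)$, and likewise for $\S^d\times\T^n\times\R \to \S^d\times\T^n_f$. Hence $\varphi$ lifts to a map $\tilde\varphi\colon E\times\R \to \S^d\times\T^n\times\R$ if and only if $\varphi_\ast$ sends the $\Z^n$ subgroup to the $\Z^n$ subgroup under the canonical identifications $\pi_1(E_f) \cong \Z_{\;f_\ast}\!\!\ltimes\Z^n \cong \pi_1(\S^d\times\T^n_f)$. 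So the problem reduces to a purely algebraic one.

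Because $\varphi$ is fiber preserving, the fiber restriction $\varphi|_{\T^n_{f,x}}\colon \T^n_{f,x}\to\T^n_{f,\varphi(x)}$ is a diffeomorphism of mapping tori, and via the canonical isomorphisms of the previous paragraph the induced map $\varphi_\ast$ agrees with the automorphism of $\Z_{\;f_\ast}\!\!\ltimes\Z^n$ induced by this fiber restriction. Consequently, $\varphi_\ast$ corresponds to \emph{some} automorphism of $\Z_{\;f_\ast}\!\!\ltimes\Z^n$, and it would suffice to show that $\Z^n$ is a characteristic subgroup.

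To verify characteristicness, I would examine the abelianization. A direct commutator computation gives $[(1,0),(0,v)] = (0,(f_\ast - I)v)$, so the commutator subgroup contains $(f_\ast - I)\Z^n \times \{0\}$. Since $f_\ast$ is a hyperbolic integer matrix, $1$ is not an eigenvalue of $f_\ast$, so $f_\ast - I$ is invertible over $\Q$ and $(f_\ast - I)\Z^n$ has finite index in $\Z^n$. Therefore the abelianization of $\Z_{\;f_\ast}\!\!\ltimes\Z^n$ has the form $\Z \oplus A$ with $A := \Z^n/(f_\ast - I)\Z^n$ a finite abelian group, where the $\Z$ factor records the first coordinate. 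The image of $\Z^n$ in this abelianization is precisely the torsion part $A$, while any element $(k,v)$ with $k\ne 0$ projects nontrivially onto the $\Z$ factor. Thus $\Z^n$ is characterized as the preimage of the torsion subgroup of $H_1(\Z_{\;f_\ast}\!\!\ltimes\Z^n)$, a description invariant under every group automorphism.

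With $\Z^n$ characteristic, one has $\varphi_\ast(\Z^n) = \Z^n$ under the canonical identifications, and the lifting criterion produces $\tilde\varphi$. The main subtlety to handle carefully is the \emph{canonicity} of the identification $\pi_1(E_f) \cong \pi_1(\T^n_{f,x})$ (which uses that $\S^d$ is simply connected, together with the relevant portion of the long exact sequence of the bundle $p_f$), and the corresponding identification on the target; once these are in place, the fiber-preserving nature of $\varphi$ makes $\varphi_\ast$ genuinely an automorphism of $\Z_{\;f_\ast}\!\!\ltimes\Z^n$, after which the algebraic argument above closes the proof.
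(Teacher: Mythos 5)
Your proposal takes essentially the same route as the paper: reduce to the covering-space lifting criterion and show that the subgroup $\Z^n\subset\Z_{\;f_*}\!\!\ltimes\Z^n$ is preserved by every automorphism (hence by $\varphi_*$ under the canonical identifications), using hyperbolicity of $f_*$. If anything, your characterization of $\Z^n$ as the preimage of the torsion subgroup of the abelianization is slightly more careful than the paper's phrasing, which calls $\Z^n$ the commutator subgroup, whereas the commutator subgroup is really $(f_*-I)\Z^n$, a finite-index subgroup of $\Z^n$; your torsion argument correctly supplies the invariance that the paper's argument needs.
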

\begin{proof}
Using the fact that $f_*$ is hyperbolic one can check that $\Z^n\subset\Z_{\;f_*}\!\!\ltimes\Z^n$ is the commutator subgroup. It follows that the subgroup $\Z^n$ is invariant under all automorphisms of $\Z_{\;f_*}\!\!\ltimes\Z^n$. Hence $\varphi_*(\Z^n)=\Z^n$. Therefore, by the lifting criterion, $\varphi$ admits a lifting to the covers that correspond to $\Z^n$. (Recall that both $\pi_1(E_f)$ and $\pi_1(\S^d\times\T^n_f)$ are canonically identified with $\Z_{\;f_*}\!\!\ltimes\Z^n$.)
\end{proof}
Note that $\tilde\varphi$ is a smooth trivialization of the bundle $E\times\R\to\S^d$. We cut $\S^d\times\T^n_f$ along $\tilde\varphi(E\times\{0\})$. For sufficiently large $t$ the submanifold $\S^d\times\T^n\times\{t\}$ is disjoint with $\tilde\varphi(E\times\{0\})$ and by cutting again along $\S^d\times\T^n\times\{t\}$ we obtain a cobordism $W^{d+n+1}$ such that $\partial^+W^{d+n+1}=E$ and $\partial^-W^{d+n+1}=\S^d\times\T^n$. It is easy to check that $W^{d+n+1}$ is an $h$-cobordism. Since the Whitehead group $Wh(\pi_1(\S^d\times\T^n))$ vanishes the $s$-cobordism theorem applies (by construction in Section~\ref{sec_bundle_cnstr}, $d+n\ge 7$) and we conclude that $E$ is diffeomorphic to $\S^d\times\T^n$. Moreover, $W^{d+n+1}$ smoothly fibers over $\S^d$ with structure group $\textup{Diff}(\T^n\times [0,1], \T^n\times\{0\})$, \ie its structure group is the group of all smooth pseudo-isotopies $\cP(\T^n)$ of the torus $\T^n$. This bundle must of course be trivial over each of the two hemispheres  $\DD^d_\pm$ of $\S^d$. We proceed to use this fact to construct a weakly smooth trivialization of the bundle $p\colon E\to\S^d$. (Recall the definition preceding Addendum~\ref{add_weak_trivial}.) This will contradict Addendum~\ref{add_weak_trivial} according to which $p\colon E\to\S^d$ is not weakly smoothly trivial. And this contradiction will finish the proof of Theorem~\ref{thm_example_flow}. So it remains to construct $g$.

We obtain $g|_{\DD^d_-\times\T^n}$ from the trivialization of the bundle 
$$
\T^n\times[0,1]\to W^{d+n+1}|_{\DD^d_-}\to \DD^d_-
$$
mentioned above. Then we apply the $s$-cobordism theorem to the $h$-cobordism $(W^{d+n+1}|_{\DD^d_+},\DD^d_+\times\T^n)$ to extend the product structure on $W^{d+n+1}|_{\DD^d_+\cap\DD^d_-}$ to all of $W^{d+n+1}|_{\DD^d_+}$. Note that this extension need not be fiber preserving; \ie $g$ needn't be a smooth bundle trivialization. But it is, at least, a weak smooth trivialization.
\end{proof}

\section{Proof of Theorem~\ref{thm_2}}\label{sec_thm2}
\label{sec_thm_2}
Let $M$ be a smooth closed $n$-dimensional manifold and let $f\colon M\to M$ be a diffeomorphism. The differential map $Df\colon TM\to TM$ is linear when restricted to fibers and, hence, induces a bundle self map of $SM$ which we still denote by $Df$. In this way the group $\textup{Diff}(M)$ of all self diffeomorphisms of $M$ also acts on $SM$ and, hence,  embeds into $\textup{Diff}(SM)$.

\subsection{Strategy of the proof}\label{sec_strategy}
We will fix a closed orientable real hyperbolic manifold $M$, whose dimension $n$ is an odd integer and $n\gg 2p-3$ (relative to Igusa's stable range). Our goal is to find a map $\varphi\colon\S^{2p-4}\to \textup{Diff}(M)$ such that the induced map
$$
D\varphi\colon\S^{2p-4}\to \textup{Diff}(SM),
$$
when composed with the natural inclusion $\sigma\colon\textup{Diff}(SM)\hookrightarrow\textup{Top}(SM)$, represents a  non-zero element in $\pi_{2p-4}(\textup{Top}(SM))$. Applying the clutching construction  to $\alpha$ yields the smooth bundle $M\to E\to \S^{2p-3}$ posited in Theorem~\ref{thm_2}. (The definition of clutching was given in Section~\ref{sec_non_Anosov}.) Then notice that
$$
SM\to SE\to\S^{2p-3}
$$
is the result of the clutching construction applied to $D\varphi$. This bundle is not topologically trivial if and only if the class of the composition $\sigma\circ D\varphi$ in the group $\pi_{2p-4}(\textup{Top}(SM))$ is not zero. So it remains to do the construction.

\subsection{The clutching map}
\label{sec_52}
We will use $\cP^s(\cdot)$ and $\cP(\cdot)$ to denote the smooth and topological pseudo-isotopy functors, respectively.

Identify $S^1\times\DD^{n-1}$ with a closed tubular neighborhood of an essential simple closed curve in $M$ which represents an indivisible element of $\pi_1(M)$. Abbreviate $S^1\times\DD^{n-1}$ by $T\subset M$ and note that $\partial T=S^1\times\S^{n-2}$. Denote the ``top'' of a pseudo-isotopy $\cP(T), \cP(M)$, {\it etc.}, by $f_1$, \ie $f_1\colon M\to M$ is $f|_{M\times\{1\}}$. This gives a map $\cP(T)\to\textup{Top}(T)$. 

Consider the map $v\colon\cP^s(T)\to\textup{Top}(SM)$ defined by the following composition
\begin{equation}\label{eq_ii}
\cP^s(T)\stackrel{\iota}{\to}\cP^s(M)\stackrel{\textup{top}}{\longrightarrow}\textup{Diff}(M)\stackrel{D}{\to}\textup{Diff}(SM)\hookrightarrow\textup{Top}(SM),
\end{equation}
where $\iota$ is induced by the inclusion $T\hookrightarrow M$ (\ie $\iota(f)$ coincides with $f$ on $T$ and is identity on $M\backslash T$). We will show that $\pi_{2p-4}(v)(\pi_{2p-4}(\cP^s(T)))$ is a non-trivial group. Then Theorem~\ref{thm_2} follows as discussed in the  Section~\ref{sec_strategy} above.

Since $T$ is parallelizable, we can (and do) fix an identification of $ST$ with $T\times\S^{n-1}$ as bundles over $T$. This allows us to define a map
$$
\widehat{\,\,\,}\colon \cP(T)\to\cP(SM)
$$
as follows
$$
\widehat{f}|_{(SM\backslash S(Int(T)))\times[0,1]}=id
$$
and
$$
\widehat{f}|_{T\times \S^{n-1}\times[0,1]}=f\times id.
$$
Note that $f\mapsto\widehat f$ is a group homomorphism.  

Denote by $u$ the composite map
\begin{equation}\label{eq_i}
\cP^s(T)\hookrightarrow\cP(T)\stackrel{\widehat{\,\,\,}}{\to}\cP(SM)\stackrel{\textup{top}}{\longrightarrow}\textup{Top}(SM).
\end{equation}

Let $\Z_p^\infty$ denote the countably infinite direct sum of copies of the cyclic group $\Z_p$ of order $p$.
\begin{lemma}\label{lemma_1_thm2} There exist a subgroup $\Z_p^\infty\subset \pi_{2p-4}(\cP^s(T))$ which maps monomorphically into $\pi_{2p-4}\textup{Top}(SM)$ under $u_*=\pi_{2p-4}(u)$, \ie under the homomorphism induced by $u$.
\end{lemma}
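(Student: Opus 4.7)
The plan is to locate the $\Z_p^\infty$ inside $\pi_{2p-4}(\cP^s(T))$ using classical pseudo-isotopy theory, and then to show, by combining smoothing theory with a detection invariant on $\textup{Top}(SM)$, that this subgroup injects under $u_\ast$. Because $\pi_1(T)=\Z$ and the hypothesis $n\gg 2p-3$ puts us in Igusa's pseudo-isotopy stable range in degree $2p-4$, the Hatcher-Wagoner--Waldhausen--Igusa machinery identifies $\pi_{2p-4}(\cP^s(T))$ with a shifted homotopy group of the smooth Whitehead spectrum $Wh^{Diff}(T)$, and the Bass-Heller-Swan decomposition of $K_\ast(\Z[\Z])$ exhibits the $p$-primary piece of the image of $J$ in $K_{2p-3}(\Z)$ as a summand. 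A $\Z_p^\infty$ subgroup is produced by iterating this basic construction along countably many disjoint product-structure tubes embedded around the core circle $\S^1\subset T$, exactly in the spirit of Propositions~4 and~5 of~\cite{FG2}.

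Next I would track these classes through the three maps that compose $u$. The inclusion $\cP^s(T)\hookrightarrow\cP(T)$ is injective on odd-primary torsion in the stable range by Burghelea-Lashof smoothing theory (the relative smoothing obstruction being concentrated at the prime $2$ and outside our degree range). The hat construction $\widehat{\,\,\,}\colon\cP(T)\to\cP(SM)$, once the trivialization $ST\cong T\times\S^{n-1}$ is fixed, is literally external multiplication by $\textup{id}_{\S^{n-1}}$; under the Waldhausen identification of $\cP$ with the topological Whitehead spectrum this becomes a transfer-type map which is injective on the $K_\ast(\Z[\Z])$ summand, since the fiber $\S^{n-1}$ is simply connected so no Nil classes are introduced or destroyed.

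The main obstacle will be the final step: showing that $\textup{top}_\ast\colon\pi_{2p-4}\cP(SM)\to\pi_{2p-4}\textup{Top}(SM)$ does not kill the classes. For this I would employ a parametrized higher torsion invariant defined directly on $\pi_{2p-4}\textup{Top}(SM)$---the Igusa-Klein higher Reidemeister torsion, or equivalently a Dwyer-Weiss-Williams parametrized $A$-theory characteristic---and verify nonvanishing on the generators by an explicit computation. The indivisibility of the essential circle $\S^1\subset T\subset M$ together with the Farrell-Jones isomorphism conjecture for the hyperbolic group $\pi_1(M)$ should guarantee that the $K$-theoretic information carried by our classes is faithfully transcribed into a nonzero invariant of $\textup{Top}(SM)$. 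Should direct higher-torsion detection prove too delicate, a backup is to analyze the restriction fibration $\cP(SM,\partial)\to\cP(SM)\stackrel{\textup{top}}{\to}\textup{Top}(SM)$ and use the same Farrell-Jones input to bound the $p$-primary rank of $\pi_{2p-4}\cP(SM,\partial)$, thereby preventing our $\Z_p^\infty$ from lying in the image of the connecting map.
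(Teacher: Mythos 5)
Your construction of the subgroup $\Z_p^\infty$ does not work, and this is the first genuine gap. The linearization you invoke cannot produce infinitely generated torsion: since $\Z[\Z]$ is regular, Bass--Heller--Swan gives $K_*(\Z[\Z])\cong K_*(\Z)\oplus K_{*-1}(\Z)$, which is finitely generated in every degree, and the image-of-$J$ part of $K_{2p-3}(\Z)$ is a single finite cyclic group. The infinitely generated $p$-torsion in $\pi_{2p-4}(\cP^s(T))$ lives in the difference between $A$-theory and linear $K$-theory (the $A$-theoretic nil phenomena for the circle), and the paper obtains it by citing Proposition~4.5 of~\cite{FO2} together with the remarks on pp.~1420--21 of~\cite{FO3}; crucially, that construction already comes with the property that $\Z_p^\infty$ injects into $H_0(\Z_2;\pi_k(\ccP(T)))$. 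Your device of ``iterating the construction along countably many disjoint tubes around the core circle'' cannot substitute for this: all such tubes are ambiently isotopic inside $T$, so repetition of embeddings gives no mechanism for linear independence. Relatedly, your claim that $\cP^s(T)\to\cP(T)$ is injective on odd-primary torsion is stronger than what is available; the correct statement (Lemma~4.1 of~\cite{FJ87}) is an isomorphism modulo the Serre class of finitely generated abelian groups, which suffices only because the subgroup in question is infinitely generated.

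The second gap is the detection step in $\textup{Top}(SM)$, which you leave to speculative machinery (higher Reidemeister torsion ``should'' be nonzero, with a vague backup plan), whereas this is exactly where the lemma's content lies. The paper's route is concrete: it stabilizes to $\ccP^s$, $\ccP$, passes to the $\Z_2$-coinvariants $H_0(\Z_2;\pi_k(\ccP(SM)))$ --- the involution ``turning a pseudo-isotopy upside down'' is precisely what governs the passage from pseudo-isotopies to homeomorphisms via the argument for Theorem~D of~\cite{FO3}, with $N$ replaced by $SM$ --- and then proves injectivity of $\Z_p^\infty\to H_0(\Z_2;\pi_k(\ccP(SM)))$ by a diagram chase resting on two specific inputs: (i) Hatcher's proposition that the composite $q_*\circ\widehat{\,\,\,}_*$ is multiplication by $\chi(\S^{n-1})=2$ (here $n-1$ is even), which is injective on $p$-torsion for odd $p$, together with $q_*$ being an isomorphism in the range $k<n-1$; and (ii) the main theorem of~\cite{FJ87}, which makes $i_*\colon\pi_k(\ccP(ST))\to\pi_k(\ccP(SM))$ an isomorphism onto a $\Z_2$-module direct summand, so that $H_0(i_*)$ is monic. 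None of these ingredients --- the $\Z_2$-coinvariants, the Euler-characteristic-$2$ relation, or the split injection coming from the dynamics/$K$-theory rigidity of~\cite{FJ87} --- appears in your proposal, and appealing to the Farrell--Jones conjecture in the abstract does not by itself transcribe the classes into nonzero elements of $\pi_{2p-4}(\textup{Top}(SM))$.
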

\begin{cor}
 The image $v_*(\Z_p^\infty)$ in $\pi_{2p-4}(\textup{Top}(SM))$ is not finitely generated and, hence, is non-trivial.
\end{cor}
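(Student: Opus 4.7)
The plan is to compare $v$ with $u$ via a fiberwise linear correction factor and reduce the problem to Lemma~\ref{lemma_1_thm2}. For $f\in\cP^s(T)$, both $u(f)$ and $v(f)$ equal the identity on $SM\setminus ST$ (for $u$ by construction; for $v$ because $\iota(f)_1$ is the identity on $M\setminus T$, so its differential is the identity on the complement of $ST$). Set $\Psi(f):=v(f)\circ u(f)^{-1}$. A direct computation in the trivialization $ST\cong T\times\S^{n-1}$ coming from the fixed parallelization of $TT$ shows that $\Psi(f)$ is the identity on $SM\setminus ST$ and acts on $ST$ by
$$
\Psi(f)(x,\xi)=\bigl(x,\,Df_1(f_1^{-1}(x))\xi/\|Df_1(f_1^{-1}(x))\xi\|\bigr).
$$
Hence $\Psi(f)$ covers $\textup{id}_M$, is fiberwise projective-linear, and equals the identity in a neighborhood of $\partial ST$ (since $f_1=\textup{id}$ in a neighborhood of $\partial T$). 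Consequently $\Psi$ factors as a continuous map
$$
\Psi\colon\cP^s(T)\longrightarrow\textup{Maps}\bigl((T,\partial T),(O(n),I)\bigr)\hookrightarrow\textup{Top}(SM),
$$
and $v(f)=\Psi(f)\circ u(f)$ holds in $\textup{Top}(SM)$ for every $f\in\cP^s(T)$.

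Next, I would identify the homotopy type of the intermediate mapping space. Since $T=\S^1\times\DD^{n-1}$ is the trivial $(n-1)$-disk bundle over $\S^1$, its Thom space is
$$
T/\partial T=\textup{Th}(\S^1\times\R^{n-1})=\Sigma^{n-1}(\S^1_+)\simeq \S^{n-1}\vee \S^n,
$$
so
$$
\textup{Maps}\bigl((T,\partial T),(O(n),I)\bigr)\simeq\Omega^{n-1}O(n)\times\Omega^n O(n).
$$
Its homotopy group in degree $2p-4$ is $\pi_{n+2p-5}(O(n))\oplus\pi_{n+2p-4}(O(n))$. Each summand is finitely generated abelian, because the universal cover $\textup{Spin}(n)$ of $O(n)$ is a simply connected finite CW complex, so by Serre's theorem its homotopy groups (and hence those of $O(n)$ in positive degrees) are finitely generated. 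It follows that $\Psi_*\bigl(\pi_{2p-4}(\cP^s(T))\bigr)$ is a finitely generated subgroup of $\pi_{2p-4}(\textup{Top}(SM))$.

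Finally, since $p\ge 3$ forces $2p-4\ge 2$, the group $\pi_{2p-4}(\textup{Top}(SM))$ is abelian, and the identity $v(f)=\Psi(f)\circ u(f)$ (with $\circ$ the product in the topological group $\textup{Top}(SM)$) induces the additive relation $v_*=u_*+\Psi_*$ on $\pi_{2p-4}$. Assume for contradiction that $v_*(\Z_p^\infty)$ is finitely generated. Then
$$
u_*(\Z_p^\infty)=(v_*-\Psi_*)(\Z_p^\infty)\subseteq v_*(\Z_p^\infty)+\Psi_*(\Z_p^\infty),
$$
which is a finitely generated abelian group by the previous paragraph; hence every subgroup of it is finitely generated. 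But Lemma~\ref{lemma_1_thm2} yields $u_*(\Z_p^\infty)\cong\Z_p^\infty$, which is \emph{not} finitely generated --- a contradiction. The step requiring the most care will be the factorization of $\Psi$ through $\textup{Maps}((T,\partial T),(O(n),I))$: one must verify that the normalized derivative (valued a priori in $GL(n)/\R^+$) can be deformed into the $O(n)$-subgroup compatibly with the identity condition near $\partial T$, and that the standard identification $T/\partial T\simeq \S^{n-1}\vee \S^n$ is implemented consistently with the chosen parallelization. Everything else is routine homotopy and group theory.
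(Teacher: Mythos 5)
Your proposal is correct and is essentially the paper's own argument: you form the correction $\Psi(f)=v(f)\circ u(f)^{-1}$ (the paper uses $w(f)=u(f)\circ v(f)^{-1}$), observe it covers $id_M$ and factors through maps $(T,\partial T)\to(GL(n,\R),id)\simeq(O(n),I)$ using the parallelization of $T$, compute that the relevant homotopy group is $\pi_{n+2p-5}O(n)\oplus\pi_{n+2p-4}O(n)$, hence finitely generated by Serre, and conclude via $v_*=u_*+\Psi_*$ (or $u_*=w_*+v_*$) and Lemma~\ref{lemma_1_thm2}. The step you flag about deforming the normalized derivative into $O(n)$ is handled in the paper simply by the standard deformation retraction of $GL(n,\R)$ onto $O(n)$, so it is not a genuine gap.
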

\begin{proof}
First notice that each of the two compositions, (\ref{eq_ii}) and~(\ref{eq_i}), send $f\in\cP^s(T)$ to two maps $v(f)\colon SM\to SM$ and $u(f) \colon SM\to SM$ that cover the same map $\textup{top}(\iota(f))$of $M$.  Hence the map
$$
w(f)\stackrel{\mathrm{def}}{=}u(f)\circ v(f)^{-1}
$$
covers $id_M$ and, therefore, is equivalent to a smooth map $\varphi\colon M\to GL(n,\R)$ such that $\varphi(x)=id_{\R^n}$ when $x\notin T$.

Notice that
$$
u_*=w_*+v_*,
$$
where maps $u_*, w_*$ and $v_*$ are the functorially induced group homomorphisms $\pi_{2p-4}\cP^s(T)\to\pi_{2p-4}\textup{Top}(SM)$.

So if $w_*(\Z_p^\infty)$ and $v_*(\Z_p^\infty)$ were both finitely generated, then, $u_*(\Z_p^\infty)$ would also be finitely generated. Since it is not, by Lemma~\ref{lemma_1_thm2}, we conclude that either image $w_*(\Z_p^\infty)$ or image $v_*(\Z_p^\infty)$ is not finitely generated. We will presently show that  $w_*(\Z_p^\infty)$ is finitely generated and thus conclude that  $v_*(\Z_p^\infty)$ is not finitely generated proving the Corollary.  

For this consider the bundle
$$
GL(n,\R)\to Aut(TM)\to M,
$$
associated to the tangent bundle $\R^n\to TM\to M$ in the sense of Steenrod~\cite{Steen}. 
The fiber of this bundle is the the space of all invertible linear transformations of the fiber $T_xM$, the tangent space at $x\in M$. (Caveat: this is not the principal frame bundle associated to $TM$.) Let $\Gamma$ be the space of all smooth cross-sections to this bundle. Note that there is a canonical map
$$
\Gamma\to\textup{Diff}(SM)
$$
and, as we observed above, $w$ factors through the composite map
$$
\Gamma_0\to\textup{Diff}(SM)\to\textup{Top}(SM),
$$
where $\Gamma_0$ is the subspace of $\Gamma$ consisting of all cross-sections which map points outside of $T$ to $id$ in the fiber over that point. (Indeed, every element in $\Gamma_0$ is equivalent to a smooth map $\varphi\colon M\to GL(n,\R)$ such that $\varphi(x)=id_{\R^n}$ when $x\notin T$.) Hence, to prove that the image $w_*(\Z_p^\infty)$ is finitely generated, it suffices to show the following claim.
\begin{claim}
Let $k=2p-4$. Then the group $\pi_k\Gamma_0$ is finitely generated.
\end{claim}
Because $T$ is parallelizable, $\Gamma_0$ is clearly homeomorphic to the space of all smooth maps
$$
(T,\partial T)\to (GL(n,\R), id)
$$
and therefore
\begin{multline*}
\pi_k\Gamma_0\simeq [\DD^k\times T,\partial(\DD^k\times T); O(n,\R), id]\\
=[S^1\times\DD^{n+k-1},\partial(S^1\times\DD^{n+k-1}); O(n,\R), id]\\
=[\S^{n+k}\vee\S^{n+k-1}, \mbox{wedge pt.}; O(n,\R), id]\\
=\pi_{n+k}O(n,\R)\oplus\pi_{n+k-1}O(n,\R).
\end{multline*}
Hence, by Serre's thesis, this group is finitely generated.
\end{proof}

\subsection{Proof of Lemma~\ref{lemma_1_thm2}}
Let $\ccP^s(\cdot)$ and $\ccP(\cdot)$ denote the stable smooth and topological pseudo-isotopy (homotopy) functors, respectively; cf.~\cite[\textsection 1]{Hat}. The map $\widehat{\,\,\,}\colon P(T)\to P(SM)$, defined in Section~\ref{sec_52}, induces a map $\ccP(T)\to\ccP(SM)$ which we still denote by $\widehat{\,\,\,}$. (Recall that the are working in the stable dimension range.)

The argument given in~\cite[pp. 1419-20]{FO3} for proving Theorem~D
 of that paper carries over, when the manifold $N$ in~\cite{FO3} is replaced by $SM$, to prove Lemma~\ref{lemma_1_thm2} provided we can find a subgroup $\Z_p^\infty$ of $\pi_k(\ccP^s(T))$, $k=2p-4$, such that the composite map
 \begin{equation}\label{eq_star}
 \Z_p^\infty\subset\pi_k(\ccP^s(T))\to\pi_k(\ccP(T))\stackrel{\widehat{\,\,\,}_*}{\longrightarrow}\pi_k(\ccP(SM)) \to H_0(\Z_2; \pi_k(\ccP(SM)))
 \end{equation}
 is one-to-one. For this reduction we use the fact that the homomorphism induced by inclusion
 $$
 \pi_k(\ccP^s(T))\to\pi_k(\ccP(T))
 $$
 is an isomorphism modulo the Serre class of finitely generated abelian groups; cf.~\cite[Lemma 4.1]{FJ87}. To find such $\Z_p^\infty$ consider the following commutative diagram
$$
\xymatrix{
\Z_p^\infty    \ar^{\subset\,\,\,\,\,\,\,\,}[r]   &  \pi_k(\ccP^s(T))  \ar[r]   &  \pi_k(\ccP(T)) \ar[rr]    & &   H_0(\Z_2;\pi_k(\ccP(T)))\\
\Z_p^\infty    \ar^{\times 2}[u] \ar^{\subset\,\,\,\,\,\,\,\,}[r]   &  \pi_k(\ccP^s(T))  \ar^{\times 2}[u] \ar[r]   &  \pi_k(\ccP(T)) \ar^{\times 2}[u] \ar[r] & \pi_k(\ccP(ST)) \ar[ul]^{q_*}\ar[r] \ar[d]^{i_*}   &    H_0(\Z_2;\pi_k(\ccP(ST))) \ar[u]^{H_0(q_*)}\ar[d]_{H_0(i_*)}\\
&&& \pi_k(\ccP(SM)) \ar[r] & H_0(\Z_2;\pi_k(\ccP(SM)))
}
$$
{\it Comments on the diagram:}
\begin{enumerate}
 \item The letter $q$ denotes the bundle projections in both of the bundles $ST\to T$ and $SM\to M$. (The latter one does not appear in the diagram.)
 \item The triangle in this diagram commutes because of~\cite[Proposition on p.~18]{Hat}. (Recall that the dimension $n$ is an odd integer.)
 \item The subgroup $\Z_p^\infty$ in $\pi_k(\ccP^s(T))$ comes from~\cite[Proposition 4.5]{FO2} using the remarks on the last three lines of page 1420 of~\cite{FO3} and the first three lines on the next page. By its construction $\Z_p^\infty$ maps in a one-to-one fashion into $H_0(\Z_2; \pi_k(\ccP(N)))$ via the composition of the maps given on the top line of the diagram.
 \item\label{comment_4} The functorially induced maps
 $$
 \begin{multlined}
 q_*\colon \pi_k(\ccP^s(ST))\to\pi_k(\ccP^s(T)),\\
 q_*\colon \pi_k(\ccP^s(SM))\to\pi_k(\ccP^s(M))
 \end{multlined}
 $$
 are $\Z_2$-modules isomorphisms because of~\cite[Corollary 5.2 and Lemma on p. 18]{Hat} and the facts that $k<n-1$ and $n-1$ is even. The action of $\Z_2$ on $\ccP(T)$ and $\ccP(M)$ are induced by ``turning a pseudo-isotopy upside down.'' (See~\cite[pp. 6-7]{Hat} for more details.) Consequently, the functorially induced map
 $$
 H_0(q_*)\colon H_0(\Z_2; \pi_k(\ccP^s(ST))\to H_0(\Z_2; \pi_k(\ccP^s(T))
 $$
 is an isomorphism.
 \item The inclusion maps of $T$ into $M$ and $ST$ into $SM$ are both denoted by $i$. Then
$$
 \begin{multlined}
 i_*\colon \pi_k(\ccP^s(T))\to\pi_k(\ccP^s(M)),\\
 i_*\colon \pi_k(\ccP^s(ST))\to\pi_k(\ccP^s(SM))
\end{multlined}
$$
denote the functorially induced group homomorphisms which are both $\Z_2$-module maps. Furthermore,
$$
H_0(i_*)\colon H_0(\Z_2;\pi_k(\ccP^s(ST)))\to H_0(\Z_2;\pi_k(\ccP^s(SM)))
$$
is the group homomorphism induced functorially by $ i_*$.
\item The main result of~\cite{FJ87} shows that $i_*$ is an isomorphism onto a direct summand of $\pi_k(\ccP(M))$ as $\Z_2$-modules. Hence, so is
$
i_*\colon \pi_k(\ccP^s(ST))\to\pi_k(\ccP^s(SM))
$
by comment~\ref{comment_4} above (note that $q\circ i=i\circ q$). Consequently, $H_0(i_*)$ is monic. 
\end{enumerate}
Now ``chasing the diagram'' verifies that the composite map~(\ref{eq_star}) is one-to-one completing the proof of Lemma~\ref{lemma_1_thm2}, and thus also Theorem~\ref{thm_2}.


\section{Proof of Theorem~\ref{thm_anosov_nil}}\label{sec_thm_anosov}
Denote by $G(M)$ the space of self homotopy equivalences of $M$ and by $G_0(M)\subset G(M)$ the connected component of $id_M$. Also denote by $\textup{Top}(M)$ the group of self homeomorphisms of $M$ and by $\textup{Top}_0(M)$ subgroup of $\textup{Top}(M)$ which consists of homeomorphisms homotopic to $id_M$ (through a path of maps). The proof of Theorem~\ref{thm_anosov_nil} is based on the following lemma. 
\begin{lemma}\label{lemma_anosov_nil}
Let $M$ be a (closed) infranilmanifold.
 Then $\textup{Top}_0(M)$ contains a connected topological subgroup $S$ satisfying the following properties.
\begin{enumerate}
\item The group $S$ is a connected Lie group.
\item The composition $\sigma\colon S\to G_0(M)$ of the inclusion maps $S\subset \textup{Top}_0(M)$ and $\textup{Top}_0(M)\subset G_0(M)$ induces an isomorphism
$$
\sigma_*\colon \pi_i(S)\to \pi_iG_0(M)
$$
for all $i$.
\item If $L\colon M\to M$ is an  affine Anosov diffeomorphism of $M$, then 
$$F\stackrel{\mbox{def}}{=}\{\varphi\in\textup{Top}_0(M) : \varphi\circ L=L\circ\varphi\}
$$
is a subgroup of $S$.
\end{enumerate}
\end{lemma}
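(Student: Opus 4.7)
The plan is to construct $S$ explicitly from the algebraic structure of $M$. Write $M=\Gamma\backslash N$, where $N$ is a connected simply connected nilpotent Lie group and $\Gamma\subset N\rtimes\mathrm{Aut}(N)$ is a torsion-free discrete cocompact subgroup. Set $\Gamma_0=\Gamma\cap N$ (a lattice in $N$), $M_0=\Gamma_0\backslash N$ (the nilmanifold finite cover of $M$), and $G=\Gamma/\Gamma_0$ (a finite group acting on $M_0$ by affine diffeomorphisms). The center $Z(N)$ acts on $M_0$ by left translation, and since $Z(N)\cap\Gamma_0=Z(\Gamma_0)$ is a lattice in $Z(N)$, this action factors through the compact torus $T_0=Z(N)/Z(\Gamma_0)$. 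The group $G$ acts on $T_0$ via its image in $\mathrm{Aut}(N)$, and the fixed subtorus $T_0^G$ consists of precisely those translations of $M_0$ that descend to $M$. Define $S=(T_0^G)_0$, the identity component.

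Property (1) is then immediate, since $S$ is by construction a compact connected torus, in particular a connected Lie group. For property (2), I would invoke Gottlieb's computation~\eqref{eq_homotopy_homotopy}: for the aspherical manifold $M$, one has $\pi_1 G_0(M)=Z(\pi_1 M)=Z(\Gamma)$ and $\pi_i G_0(M)=0$ for $i\ge 2$. It remains to identify $\pi_1(S)$ with $Z(\Gamma)$ compatibly with $\sigma$. Torsion-freeness of $\Gamma$ together with Zariski-density of $\Gamma_0$ in $N$ forces every central element of $\Gamma$ into $\Gamma_0$ (an element outside $\Gamma_0$ has a nontrivial $\mathrm{Aut}(N)$-component which, by centrality, would have to act trivially on the Zariski-dense $\Gamma_0$, a contradiction). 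Consequently $Z(\Gamma)=Z(\Gamma_0)^G=Z(\Gamma_0)\cap Z(N)^G$, which is exactly the kernel of $\exp\colon\mathrm{Lie}(S)\to S$, i.e., $\pi_1(S)$. By naturality of Gottlieb's identification (evaluation of a loop of homeomorphisms at the basepoint), the induced map $\sigma_\ast\colon\pi_1(S)\to\pi_1 G_0(M)$ is this identification; in higher degrees both sides vanish.

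For property (3), let $\varphi\in F$ and lift to a $\Gamma$-equivariant homeomorphism $\tilde\varphi\colon N\to N$, which exists because $\varphi\simeq\mathrm{id}_M$. Choose a lift $\tilde L(x)=t_L\cdot A(x)$ of $L$ with $A\in\mathrm{Aut}(N)$ hyperbolic on $\mathrm{Lie}(N)$ (this uses that $L$ is affine and Anosov). The relation $L\varphi=\varphi L$ lifts to $\tilde L\tilde\varphi=\gamma_0\cdot\tilde\varphi\,\tilde L$ for some fixed $\gamma_0\in\Gamma$. Setting $\psi(x)=x^{-1}\tilde\varphi(x)$ yields a $\Gamma_0$-invariant map $\psi\colon N\to N$ which is therefore bounded (it factors through the compact $M_0$) and satisfies a cocycle identity over $\tilde L$ twisted by $\gamma_0$. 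Hyperbolicity of $dA$ on $\mathrm{Lie}(N)$, applied stratum by stratum along the lower central series, forces the only bounded solution to be a constant $\psi\equiv n_0$. Equivariance then places $n_0\in Z(N)$, and descent through the $G$-action places it in $Z(N)^G$. Thus $\tilde\varphi$ is left translation by $n_0\in Z(N)^G$; projecting and using that $\varphi$ is homotopic to the identity shows that $\varphi$ lies in the path-component $S=(T_0^G)_0$.

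The main obstacle is the Livshitz-type rigidity step in (3): proving that, inside $N$, the only bounded twisted cocycle over an affine Anosov map is a constant in the center. In the torus case this is immediate from hyperbolicity of $A-I$ on $\R^n$, but for non-abelian $N$ the cocycle takes values in $N$ and must be analyzed using the lower central series filtration and the Baker--Campbell--Hausdorff formula, reducing to abelian hyperbolicity problem on each graded piece; this is essentially where all of the dynamical content of the lemma sits. Once this rigidity is in place, properties (1) and (2) reduce to standard facts about lattices in nilpotent Lie groups together with Gottlieb's theorem.
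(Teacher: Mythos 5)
Your proposal takes a genuinely different, and in principle viable, choice of $S$ from the paper: the paper takes $S=\EuScript N^G_0$, the identity component of the $G$-fixed part of the full translation group $\EuScript N=N/\mathcal Z(\Gamma_0)$ acting on $M$ (a connected, generally non-compact nilpotent Lie group), whereas you take only its compact core, the subtorus of $G$-fixed \emph{central} translations. Both candidates have the homotopy type required for property (2) ($\pi_1\cong\mathcal Z(\Gamma_0)^G\cong\mathcal Z(\pi_1M)$, matched under Gottlieb's evaluation map, and vanishing higher homotopy), and your remark that $\mathcal Z(N)^G$ is a linear subspace, hence connected, correctly disposes of the danger that $F$ might land in a non-identity component of the fixed torus. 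For property (3) the paper does not redo any dynamics: it quotes Walters~\cite{W} through the closing argument of Appendix~C of~\cite{FG2}, while you propose to reprove the needed rigidity by a bounded-cocycle argument on $N$. So the skeleton is sound and, if completed, would even produce a smaller $S$ than the paper's.

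There are, however, two real gaps. First, the step you yourself flag---that the only bounded solution of the twisted cocycle equation over a hyperbolic affine map of $N$ is constant---is exactly the content of Walters' theorem (Theorem~2 of~\cite{W}); your outline indicates the right strategy (induction along the lower central series) but contains no proof, and this is where all the work lies if you do not simply cite~\cite{W}. Second, and incorrect as stated: ``Equivariance then places $n_0\in Z(N)$'' is false. Equivariance of $\tilde\varphi(x)=x\,n_0$ under $\pi_1(M)$ only yields $h(n_0)=n_0$ for all $h\in G$, i.e.\ $n_0\in N^G$, and $G$-fixed elements need not be central; yet centrality is indispensable for your choice of $S$, since a non-central right translation does not lie in your torus. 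Centrality must instead be extracted from the relation $\tilde L\tilde\varphi=\gamma_0\tilde\varphi\tilde L$: evaluating it one finds that the holonomy part of $\gamma_0$ acts as an inner automorphism of $N$, hence is trivial because $G$ embeds in $\mathrm{Out}(N)$, so that $A(n_0)n_0^{-1}\in\mathcal Z(N)\cap\Gamma_0$; then, since $a\mapsto A(a)a^{-1}$ is a bijection of $N$ for hyperbolic $A$ and the equation $A(a)a^{-1}=z$ already has a solution inside $\mathcal Z(N)$ (as $A-\mathrm{Id}$ is invertible on $\mathrm{Lie}(\mathcal Z(N))$), uniqueness forces $n_0\in\mathcal Z(N)$. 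A similar slip occurs earlier: for a central element $(h,u)$ of $\pi_1(M)$, the automorphism $h$ acts on $\Gamma_0$ as conjugation by $u^{-1}$, not trivially, so the conclusion $\mathcal Z(\pi_1M)\subset\Gamma_0$ again needs $G\hookrightarrow\mathrm{Out}(N)$ rather than Zariski density alone. With these repairs---or by quoting Walters as the paper does---your route goes through.
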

We proceed with the proof of Theorem~\ref{thm_anosov_nil} assuming Lemma~\ref{lemma_anosov_nil}.  Let $q\colon E\to M$ be a fiber homotopy trivialization of the Anosov bundle
$$
M\to E\stackrel{p}{\to}X.
$$ 
As in the proof of Theorem~\ref{thm_anosov_flows} we identify $M$ with a particular fiber $M_{x_0}=p^{-1}(x_0)$. Recall that we can assume without loss of generality that $q|_{M}\colon M\to M$ is homotopic to identity; moreover, by performing a homotopy in a chart we can (and do) assume that $q|_{M}=id_M$. Also recall that the bundle $M\to E\to X$ admits a fiberwise Anosov diffeomorphism. Recall that we denote by $f_x$, $x\in X$, the Anosov diffeomorphisms of the fibers $M_x$, $x\in X$. We will write  $f\colon M\to M$ for the restriction $f_{x_0}\colon M\to M$ of the Anosov bundle map to the fiber $M=M_{x_0}$.

For each $x\in X$ consider the space $F_x$ of all homeomorphisms $\varphi \colon M\to M_x$ which are conjugacies between $f$ and $f_x$ and which also satisfy the property ``$q\circ\varphi\colon M\to M$ is homotopic to $id_M$." It follows from the uniqueness part of the structural stability theorem that each $F_x$ is a discrete topological space. 

Let
$$
\mathcal F(X)=\bigsqcup_{x\in X}F_x.
$$
Also let $\mathcal F(p)\colon \mathcal F(X)\to X$ be the map that sends the fibers $F_x$ to their base-points, $x\in X$. By an argument similar to that given in the proof of Theorem~\ref{thm_anosov_flows}, $\mathcal F(p)\colon \mathcal F(X)\to X$ is a covering of $X$. Moreover, it is a regular covering space; \ie a principal bundle with structure group 
$$F_{x_0}=\{\varphi\in\textup{Top}_0(M) : \varphi\circ f=f\circ\varphi\}.$$ 
By work of Franks and Manning~\cite{Fr, Mann}, the Anosov diffeomorphism $f\colon M\to M$ is conjugate to an affine automorphism $L\colon M\to M$ via a conjugacy homotopic to $id_M$. This conjugacy gives an isomorphism between topological groups $F_{x_0}$ and $F$. Thus we can (and do) identify $F_{x_0}$ and $F$.

Notice that the associated $M$-bundle $\mathcal F(X)\times_FM$ is isomorphic to $E$ via $(\varphi, y)\mapsto \varphi(y)$, as $\textup{Top}(M)$-bundles. Hence we have reduced the structure group of the bundle $p\colon E\to X$ from $\textup{Top}(M)$ to the subgroup $F$ and, therefore, to $S$, where $S$ is the topological subgroup of $\textup{Top}_0(M)$ posited in Lemma~\ref{lemma_anosov_nil}. Consequently, it suffice to show that the principal $S$-bundle ${\mathcal F(X)\times_FS\to X}$ associated to $p\colon E\to X$ is trivial. 

Recall that the total space $\mathcal F(X)\times_FS$ is the quotient of $\mathcal F(X)\times S$ by the action of $F$, given by $(\varphi,s)\mapsto(\varphi\circ \psi,\psi^{-1}\circ s)$, $\psi\in F$.
Therefore the map $\eta\colon\mathcal F(X)\times_FS\to G_0(M)$ given by
$$
(\varphi,s)\stackrel{\eta}{\mapsto} q\circ \varphi\circ s 
$$
is well defined. Notice that because $q|_M=id_M$, map $\eta$ restricted to the fiber over $x_0$ is the composite map
$$
\sigma\colon S\to\textup{Top}_0(M)\to G_0(M).
$$

Because $G_0(M)$ has the homotopy type of a CW complex, which is a consequence of~\cite[Theorem 1]{M59}, map $\sigma$ is a homotopy equivalence by property~2 of Lemma~\ref{lemma_anosov_nil} and, hence, has a homotopy inverse $\sigma^{-1}$. Then the map $\sigma^{-1}\circ\eta\colon\mathcal F(X)\times_FS\to S$ is a homotopy equivalence when restricted to the fibers, \ie the bundle $\mathcal F(X)\times_FS\to X$ is fiber homotopically trivial. Hence it has a cross-section and consequently is a trivial $S$-bundle.

To finish the proof of Theorem~\ref{thm_anosov_nil} it remains to establish Lemma~\ref{lemma_anosov_nil}.
\begin{proof}[Proof of Lemma~\ref{lemma_anosov_nil}]
This is a consequence of the construction and arguments made in Appendix~C of~\cite{FG2}. In particular, $S$ is $\EuScript N_0^G$, \ie the connected component containing identity element of the Lie group $\N^G$ from Fact~23 of Appendix~C. We recall its definition.

The universal cover of $M$ is a simply connected nilpotent Lie group $N$ and $\pi_1(M)$,  considered as the group of deck transformations of $N$, is a discrete subgroup of $N\rtimes G$, where $G$ is a finite group of automorphisms of $N$ which maps monomorphically into $Out(N)$. Also $\pi_1(M)$ projects epimorphically onto $G$. Let $\Gamma=\pi_1(M)\cap N$, \ie the subgroup of $\pi_1(M)$ that acts by pure translations. The homogeneous space $\widehat M=N/\Gamma$ is a nilmanifold, which is a regular finite-sheeted cover of $M$ whose group of deck transformations can be identified with $G$. The natural action of $N$ on $\widehat M$ gives a representation
$$
\hat\rho\colon N\to\textup{Top}_0(\widehat M),
$$
whose kernel is $\mathcal Z(N)\cap\Gamma$, where $\mathcal Z(N)$ denotes the center of $N$. The image of this representation is the nilpotent Lie group $\N$, \ie
$$
\N=N/\mathcal Z(\Gamma).
$$
(It is a consequence of Mal$'$cev rigidity that $\mathcal Z(\Gamma)=\mathcal Z(N)\cap\Gamma$, see~\cite{Malcev}.)
Now $G$ acts on $\N$ by conjugation. Let $\N^G$ denote the subgroup of $\N$ which is fixed by $G$. The group $\N^G$ maps isomorphically onto a topological subgroup of $\textup{Top}_0(M)$ which we also denote by $\N^G$. Then the connected Lie group $S$ posited in Lemma is the connected component of identity element $\N^G_0$. The argument on the last 10 lines of Appendix~C (crucially using~\cite{W}) shows that $F$ is a subgroup of $S$, \ie verifies property 3 of Lemma.

So it remains to verify property 2. Recall~(\ref{eq_homotopy_homotopy})
 $$
 \pi_n(G_0(M))=
 \begin{cases}
  \mathcal Z(\pi_1(M)), & \mbox{if}\,\, n=1\\
  0, & \mbox{if}\,\, n\ge 2
 \end{cases}
 $$
 So we need a similar calculation for $\pi_n(S)$. 
 
 The group $S$ is a connected nilpotent Lie group, hence $\pi_n(S)=0$ for $n\ge 2$. To calculate $\pi_1(S)$, we use that
 $$
 \rho\colon N^{G}\to S
 $$
 is a covering space. To see this apply the functor $(\cdot)^G$ to the following (covering space) exact sequence
 $$
 1\to\mathcal Z(\Gamma)\to N\to \N\to 1,
 $$
 which yields the half exact sequence 
 $$
 1\to \mathcal Z(\Gamma)^G\to N^G\to \N^G\to  .
 $$
 Then, using a Lie algebra argument, we see that the image of $N^G$ in $\N^G$ is $\N^G_0=S$, since $N^G$ is connected by Fact~22 of~\cite[Appendix~C]{FG2}. Moreover, $N^G$ is contractible since it is a closed connected subgroup of the simply connected nilpotent Lie group $N$. Consequently,
 $$
 \pi_1(S)=\ker(\rho)=\mathcal Z(\Gamma)^G.
 $$
 \begin{claim} Group $\mathcal Z(\Gamma)^G$ is isomorphic to $\mathcal Z(\pi_1(M))$.
 \end{claim}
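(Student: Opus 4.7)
The plan is to identify $\mathcal Z(\pi_1(M))$ with $\mathcal Z(\Gamma)^G$ by first showing that every central element of $\pi_1(M)$ actually lies in the translation subgroup $\Gamma$, and then characterizing centrality within $\Gamma$ directly via the two natural conditions: commutativity with $\Gamma$ and $G$-invariance. The key inputs are Mal\textsc{\char13}cev rigidity (which is already invoked to give $\mathcal Z(\Gamma)=\mathcal Z(N)\cap\Gamma$) and the assumption that $G$ maps monomorphically into $\mathrm{Out}(N)$.

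First I would check the inclusion $\mathcal Z(\pi_1(M))\subseteq\Gamma$. Suppose $z\in\mathcal Z(\pi_1(M))$ and write $z=(n,g)\in N\rtimes G$. For every $\gamma\in\Gamma$ the relation $z\gamma z^{-1}=\gamma$ computes in the semidirect product to $n\,g(\gamma)\,n^{-1}=\gamma$, so the automorphism $g$ of $\Gamma$ coincides with conjugation by $n^{-1}$. By Mal\textsc{\char13}cev rigidity, automorphisms of $\Gamma$ extend uniquely to automorphisms of $N$, so $g$ and $\mathrm{Inn}(n^{-1})$ agree as automorphisms of $N$; hence $g$ is inner, and therefore trivial in $\mathrm{Out}(N)$. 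Since $G\hookrightarrow\mathrm{Out}(N)$, this forces $g=1$, i.e.\ $z\in\Gamma$.

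Next, for $z\in\Gamma$ I would unwind centrality in $\pi_1(M)$. Picking, for each $g\in G$, a lift $(n_g,g)\in\pi_1(M)$, one computes $(n_g,g)(z,1)(n_g,g)^{-1}=(n_g\,g(z)\,n_g^{-1},1)$. The conditions ``$z$ commutes with all of $\Gamma$'' and ``$z$ commutes with each $(n_g,g)$'' together are equivalent to centrality in $\pi_1(M)$. The first condition gives $z\in\mathcal Z(\Gamma)=\mathcal Z(N)\cap\Gamma$. The second, combined with $z\in\mathcal Z(N)$ (which makes $n_g\,g(z)\,n_g^{-1}=g(z)$), reduces to $g(z)=z$ for every $g\in G$, that is, $z\in\mathcal Z(\Gamma)^G$. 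Conversely, any $z\in\mathcal Z(\Gamma)^G$ clearly satisfies both conditions, so is central.

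The main obstacle, and the only substantive step, is the first paragraph: showing that a central element cannot have nontrivial $G$-component. Everything else is straightforward algebra in the semidirect product $N\rtimes G$. Combining the two paragraphs yields $\mathcal Z(\pi_1(M))=\mathcal Z(\Gamma)^G$, which is the desired isomorphism.
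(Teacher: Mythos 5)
Your proof is correct and follows essentially the same route as the paper: the same semidirect-product conjugation identity, the use of $\mathcal Z(\Gamma)=\mathcal Z(N)\cap\Gamma$, Mal\textsc{\char13}cev unique extension to see that a central element's $G$-component is inner and hence trivial (since $G\hookrightarrow Out(N)$), and surjectivity of $\pi_1(M)\to G$ to get $G$-invariance. The only difference is organizational — you first show central elements lie in $\Gamma$ and then characterize centrality on generators, whereas the paper proves the two inclusions directly — but the key steps coincide.
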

 \begin{proof} 
 An element $v$ is in $\mathcal Z(\Gamma)^G$ if and only if the following two conditions are satisfied.
 \begin{enumerate}
 \item $v\in\mathcal Z(\Gamma)$;
 \item $\forall h\in G$, $h(v)=v$.
 \end{enumerate}
 Then, the inclusion $\mathcal Z(\Gamma)^G\subset \mathcal Z(\pi_1(M))$ follows from the following identity
 \begin{equation}
 \label{eq_identity}
 (h,u)(id_N, v)(h,u)^{-1}=(id_N,uh(v)u^{-1})
 \end{equation}
 valid for all $h\in G$ and $u,v\in N$.
 
 To check the opposite inclusion $\mathcal Z(\pi_1(M))\subset\mathcal Z(\Gamma)^G$ take $(g,v)\in \mathcal Z(\pi_1(M))$. Then
 \begin{equation}\label{eq_identity2}
 uh(v)=vg(u)
 \end{equation}
 for all $(h,u)\in\mathcal Z(\pi_1(M))$. By choosing $h=id_N$ we obtain $g(u)=v^{-1}uv$ for all $u\in\Gamma$, and, hence, $g$ is an inner automorphism of $N$. But $G$ maps monomorphically into $Out(N)$; therefore, $g=id_N$ and $v\in\mathcal Z(\Gamma)$. Finally, to see that $(id_N,v)\in \mathcal Z(\pi_1(M))$ satisfies the condition 2 as well, we use the fact that the composition $\pi_1(M)\subset G\ltimes N\to G$ is an epimorphism in conjunction with~(\ref{eq_identity}) and~(\ref{eq_identity2}).
  \end{proof}
 
 Therefore, the above claim implies $\pi_1(S)\simeq\pi_1(G_0(M))$.
 
 
 
 It remains to show that $\sigma_*\colon\pi_1(S)\to\pi_1(G_0(M))$ is an isomorphism. However, $\pi_1(S)\subset\mathcal Z(\Gamma)$ and, hence, $\pi_1(S)$ is a finitely generated abelian group. Therefore it suffices to show that $\sigma_*$ is an epimorphism, but this is a consequence of Fact~22 of~\cite[Appendix~C]{FG2}.
\end{proof}

To prove Addeddum~\ref{add_23} we need the following modification of Lemma~\ref{lemma_anosov_nil}.
\begin{lemma}
 \label{lemma_anosov_nil2}
 Let $M$ be a (closed) nilmanifold. Then $\textup{Top}(M)$ contains a topological subgroup $S$ satisfying the following properties.
\begin{enumerate}
\item The group $S$ is a Lie group.
\item The composition $\sigma\colon S\to G(M)$ of the inclusion maps $S\subset \textup{Top}(M)$ and $\textup{Top}(M)\subset G(M)$ induces an isomorphism
$$
\sigma_*\colon \pi_i(S)\to \pi_iG(M)
$$
for all $i\ge 0$.
\item If $L\colon M\to M$ is an  affine Anosov diffeomorphism of $M$, then 
$$F\stackrel{\mbox{def}}{=}\{\varphi\in\textup{Top}(M) : \varphi\circ L=L\circ\varphi\}
$$ 
is a subgroup of $S$.
\end{enumerate}
\end{lemma}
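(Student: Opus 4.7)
The plan is to enlarge the identity-component Lie group $\N_0^G$ from the proof of Lemma~\ref{lemma_anosov_nil} to the full group of \emph{affine} self-homeomorphisms of the nilmanifold $M = N/\Gamma$. Since the finite group $G$ of the infranilmanifold case is now trivial, the new ingredient is to add non-identity components of $\textup{Top}(M)$ corresponding to outer automorphisms of $\Gamma$, so that $\pi_0(S) = \pi_0(G(M)) = \textup{Out}(\Gamma)$.

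\textbf{Construction of $S$.} Using Mal$'$cev rigidity~\cite{Malcev} to identify $\textup{Aut}(\Gamma)$ with $\textup{Aut}_\Gamma(N) := \{\phi \in \textup{Aut}(N) : \phi(\Gamma) = \Gamma\}$, form $\tilde S := N \rtimes \textup{Aut}(\Gamma)$ and let it act on $M$ by $(g,\phi)\cdot[x] = [g\phi(x)]$. A direct computation (invoking discreteness of $\Gamma$ in $N$) identifies the kernel of $\tilde S\to\textup{Top}(M)$ as the normal subgroup
\[
\tilde\Gamma := \{(g, \iota_g^{-1}) : g \in \Gamma\}, \qquad \iota_g(x) := gxg^{-1}.
\]
Setting $S := \tilde S/\tilde\Gamma$ and quotienting the short exact sequence $1 \to N \to \tilde S \to \textup{Aut}(\Gamma) \to 1$ by $\tilde\Gamma$ (noting $\tilde\Gamma \cap N = \mathcal Z(\Gamma)$ and $\tilde\Gamma$ surjects onto $\textup{Inn}(\Gamma)$) yields
\[
1 \to \N \to S \to \textup{Out}(\Gamma) \to 1,
\]
where $\N = N/\mathcal Z(\Gamma)$ is the connected nilpotent Lie group from Appendix~C of~\cite{FG2}. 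Since $\textup{Out}(\Gamma)$ is discrete, $S$ is a Lie group with identity component $S_0 = \N$, proving property~1.

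\textbf{Homotopy computation.} Asphericity of $M$ together with~\cite{Gott} gives $\pi_0(G(M)) = \textup{Out}(\Gamma)$, $\pi_1(G(M)) = \mathcal Z(\Gamma)$, and $\pi_i(G(M)) = 0$ for $i \geq 2$. On the $S$ side, the long exact sequence of the fibration $\N \to S \to \textup{Out}(\Gamma)$ (discrete quotient), combined with the covering $N \to \N$ (with $N$ contractible and deck group $\mathcal Z(\Gamma)$), yields the matching values. The map $\sigma_*$ is tautologically the identity on $\pi_0 = \textup{Out}(\Gamma)$ (an affine homeomorphism acts on $\Gamma$ by its underlying outer automorphism class) and the identity on $\pi_1$: a loop $t \mapsto [n_t] \in \N \subset S$ based at $\textup{id}_M$ lifts to a path in $N$ from $e$ to some $z \in \mathcal Z(\Gamma)$, and its image under $\sigma$, evaluated at $[e] \in M$, is the loop $t \mapsto [n_t]$ representing the same class $z \in \pi_1(M) = \Gamma$.

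\textbf{The centralizer $F \subset S$, and the main obstacle.} Given $\varphi \in \textup{Top}(M)$ commuting with $L$, its induced outer automorphism $[\varphi_\#] \in \textup{Out}(\Gamma)$ lifts by Mal$'$cev rigidity to $\Phi \in \textup{Aut}_\Gamma(N)$, and the relation $\varphi L = L\varphi$ forces $\Phi$ to commute with $L_\#$ modulo inner automorphisms. Choosing the translation part of the affine homeomorphism $A \in S$ lifting $\Phi$ so that $A\circ L = L \circ A$ (this is where hyperbolicity of $L$ is used to select a preferred translation via contraction on the unstable/stable splitting), one has $\psi := \varphi \circ A^{-1} \in \textup{Top}_0(M)$ commuting with $L$; the argument from the last ten lines of Appendix~C of~\cite{FG2} (specialized to trivial $G$) then places $\psi$ in $\N = S_0$, so that $\varphi = \psi \circ A \in S$. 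The main obstacle is precisely this compatible choice of translation part: one needs every outer automorphism in the centralizer of $L_\#$ to be realized by a genuine affine commutant of $L$, which amounts to the classical rigidity statement that the topological centralizer of an affine Anosov diffeomorphism of a nilmanifold coincides with its algebraic centralizer.
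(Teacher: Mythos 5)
Your construction of $S$ is the same as the paper's: your quotient $\bigl(N\rtimes \textup{Aut}(\Gamma)\bigr)/\tilde\Gamma$ is exactly the group $Af\!f(N/\Gamma)\simeq \textup{Aut}(\Gamma)\ltimes N/\sigma(\Gamma)$ of affine diffeomorphisms of the nilmanifold (your $\tilde\Gamma=\{(g,\iota_g^{-1}):g\in\Gamma\}$ is the paper's $\sigma(\Gamma)=\{R_a:a\in\Gamma\}$), and your verification of properties 1 and 2 via the exact sequence $1\to\N\to S\to \textup{Out}(\Gamma)\to 1$, Gottlieb's computation of $\pi_i G(M)$, and contractibility of $N$ matches the paper's (slightly terser) argument. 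So for properties 1 and 2 your proposal is correct and essentially identical to the paper.

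The genuine gap is property 3. Your plan does not prove it: you explicitly defer the crucial step --- that every $\varphi\in\textup{Top}(M)$ commuting with the affine Anosov diffeomorphism $L$ is affine --- to what you call ``the classical rigidity statement that the topological centralizer of an affine Anosov diffeomorphism of a nilmanifold coincides with its algebraic centralizer,'' which is precisely the assertion to be established, not an input you have secured. Moreover, the intermediate reductions you propose do not close the circle: the ``last ten lines of Appendix~C of \cite{FG2}'' argument that you invoke to place $\psi=\varphi\circ A^{-1}$ in $\N$ itself crucially uses Walters' theorem (as the paper notes in the proof of Lemma~\ref{lemma_anosov_nil}), and your claim that hyperbolicity lets you adjust the translation part of $A$ so that $A\circ L=L\circ A$ is asserted without argument (it requires solving a twisted equation for the translation, and in any case only realizes the outer class, not the full statement). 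The paper disposes of property 3 in one stroke by citing Theorem~2 of Walters \cite{W}, which says exactly that a homeomorphism commuting with an affine Anosov (indeed suitably ergodic/hyperbolic) affine transformation of a nilmanifold is itself affine; with that reference your detour through a commuting affine representative $A$ and the Appendix~C argument becomes unnecessary, since Walters' theorem directly gives $F\subset Af\!f(N/\Gamma)=S$. So the missing ingredient is a proof or citation of this centralizer rigidity; everything else in your proposal is sound.
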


We proceed with the proof of Addeddum~\ref{add_23} assuming Lemma~\ref{lemma_anosov_nil2}. 
\begin{proof}[Proof of Addeddum~\ref{add_23}]
By the argument used to prove Theorem~\ref{thm_anosov_nil} we can identify the nilmanifolds $M_i=N_i/\Gamma_i$, $i=1, 2$, with fibers $M_{i,x_0}$ for a fixed base point $x_0\in X$. By the argument used in the proof of Theorem~\ref{thm_anosov_nil} the Anosov diffeomorphisms $f_i\colon M_i\to M_i$ can be identified with affine Anosov diffeomorphisms $L_i$. 
We proceed as in the proof of Theorem~\ref{thm_anosov_nil} separately for each bundle $p_i\colon E_i\to X$. For $x\in X$ consider the fibers $F_{i,x}$ that consist of all homeomorphisms $\varphi \colon M_i\to M_{i,x}$ which are conjugacies between $f_i$ and $f_{i,x}$. (Note we drop the constraint that involves fiber homotopy equivalence.) By taking the union of these fibers we still obtain principal bundles
$$
\mathcal F(p_i)\colon \mathcal F_i(X)\to X,
$$
which reduce (inside of $\textup{Top}(M_i)$) the structure group of each bundle $p_i\colon E_i\to X$ to the group 
$$
F_i\stackrel{\mbox{def}}{=}\{\varphi\in\textup{Top}(M_i) : \varphi\circ L_i=L_i\circ\varphi\}.
$$

By Mal$'$cev's rigidity theorem, the given homotopy equivalence $M_1=M_{1,x_0}$ to $M_2=M_{2,x_0}$ determines an affine diffeomorphism between $M_1$ and $M_2$ via which we identify $M_1$ and $M_2$ to a common nilmanifold $M$ to which we apply Lemma~\ref{lemma_anosov_nil2}. In this way we reduce the structure group of each bundle, inside $\textup{Top}(M)$, to the Lie group $S$ posited in Lemma~\ref{lemma_anosov_nil2}.

Let $k_i\colon X\to BS$, $i=1,2$, be a continuous map which classifies the bundle $E_i$ in terms of its reduced structure group $S$. Here $BS$ denotes the classifying space for bundles with structure group $S$. Similarly, $BG(M)$ is the classifying space constructed by Stasheff~\cite{St} for homotopy $M$-bundles up to fiber homotopy equivalence, cf.~\cite[pages 3-7]{MM}. The inclusion map $S\subset G(M)$ induces a map
$$
\eta\colon BS\to BG(M),
$$
which corresponds to the forget structure map at the bundle level. Because of property~2 in Lemma~\ref{lemma_anosov_nil2}, $\eta$ is a weak homotopy equivalence and hence induces a bijection
$$
\eta_*\colon [X,BS]\to [X, BG(M)]
$$
between homotopy classes of maps, cf.~\cite[p. 405, Cor. 23]{S66}.

Since the bundles $E_1$ and $E_2$ are fiber homotopically equivalent, the composite maps $\eta\circ k_1$ and $\eta\circ k_2$ are homotopic. Consequently, $k_1$ is homotopic to $k_2$. Let $\sigma\colon BS\to B\textup{Top}(M)$ denote the map induced by inclusion $S\subset \textup{Top}(M)$. Then $\sigma\circ k_1$ is homotopic to $\sigma\circ k_2$ and therefore $E_1$ and $E_2$ are equivalent as $\textup{Top}(M)$-bundles.
\end{proof}

To finish the proof of Addeddum~\ref{add_23} it remains to establish Lemma~\ref{lemma_anosov_nil2}.
\begin{proof}[Proof of Lemma~\ref{lemma_anosov_nil2}]
We start by recollecting some terminology. First recall that $M=N/\Gamma$, where $N$ is a simply connected nilpotent Lie group and $\Gamma\subset N$ is a discrete cocompact subgroup. An affine diffeomorphism of $N$ is the composite of an automorphism $\alpha\in Aut(N)$ with a left translation $L_a\colon N\to N$, where $a\in N$, which is defined by
$$
x\mapsto ax,\;\;\;\textup{for all}\; x\in N.
$$
Likewise, the right translation $R_a\colon N\to N$ and the inner automorphism $I_a\colon N\to N$ are defined by 
$$
x\mapsto xa\;\;\;\textup{and}\;\; x\mapsto axa^{-1},
$$
respectively. Note that $L_a$ induces a self diffeomorphism of the left coset space $N/\Gamma$ (nilmanifold) given by the formula
$$
x\Gamma\mapsto ax\Gamma.
$$
The right translation $R_a$ also induce a self diffeomorphism when $a\in \Gamma$. Denote these diffeomorphisms  again by $L_a$ and $R_a$, respectively.

Mal$'$cev showed that every $\alpha\in Aut(\Gamma)$ induces an automorphism $\bar\alpha \in Aut(N)$, which in turn induces a self diffeomorphism (denoted again by) $\alpha\in\textup{Diff}(N/\Gamma)$ given by the formula
$$
x\Gamma\mapsto \bar\alpha(x)\Gamma.
$$
In particular, if $a\in\Gamma$ then the inner automorphism $I_a$ induces a self diffeomorphism $I_a\in\textup{Diff}(N/\Gamma)$. An {\it affine diffeomorphism} of $N/\Gamma$ is by definition the composite of maps of the form $\alpha$ and $L_a$ in $\textup{Diff}(N/\Gamma)$. Note that $a\mapsto L_a$ induces an homomorphism of $N$ onto a closed subgroup of $\textup{Top}(N/\Gamma)$ whose kernel is $\mathcal Z(N)\cap\Gamma=\mathcal Z(\Gamma)$; while $Aut(\Gamma)\to \textup{Diff}(N/\Gamma)$ induces an embedding of $Out(\Gamma)$ onto a discrete subgroup of $\textup{Top}(N/\Gamma)$. We note the following identities
$$
\alpha\circ L_a\circ\alpha^{-1}=L_{\alpha(a)};\;\; R_a=L_a\circ I_{a^{-1}}=I_{a^{-1}}\circ L_a,
$$
for all $a\in N$ and $\alpha\in Aut(N)$. From these one easily deduces the following equations
\begin{enumerate}
 \item $Af\!f(N)\simeq Aut(N)\ltimes N$;
 \item $Af\!f(N/\Gamma)\simeq Aut(\Gamma)\ltimes N/\sigma(\Gamma)$,
\end{enumerate}
where $\sigma\colon \Gamma\to Af\!f(N)$ is the monic anti-homomorphism given by $\sigma(a)=R_a$, $a\in\Gamma$. (Here $Af\!f(N)$ and $Af\!f(N/\Gamma)$ denote the closed subgroups of $\textup{Top}(N)$ and $\textup{Top}(N/\Gamma)$, respectively, consisting of all affine diffeomorphisms. Furthermore, $\sigma(\Gamma)$ is a discrete normal subgroup of $Af\!f(N)$.)

Under the canonical projection $Aut(\Gamma)\ltimes N$ to $Aut(\Gamma)$, $\sigma(\Gamma)$ maps onto the group of all inner automorphisms of $\Gamma$, $Inn(\Gamma)$ with kernel $\mathcal Z(\Gamma)$. Hence the canonical exact sequence
$$
1\to N\to Aut(\Gamma)\ltimes N\to Aut(\Gamma)\to 1
$$
yields the following quotient exact sequence
$$
1\to N/\mathcal Z(\Gamma)\to Af\!f(N/\Gamma)\to Out(\Gamma)\to 1
$$
Using this we easily deduce that the inclusion map $Af\!f(N/\Gamma)\subset G(M/\Gamma)$ induces an isomorphism on the homotopy groups $\pi_i$ for $i=0,1,2,\ldots $. We now define $S$ to be this subgroup $Af\!f(N/\Gamma)$ of $\textup{Top}(N/\Gamma)=\textup{Top}(M)$. Hence, we have already verified the properties~1 and~2 asserted in Lemma~\ref{lemma_anosov_nil2}. The remaining property~3 follows immediately from Theorem~2 in Walters' paper~\cite{W}.
\end{proof}
 
\begin{remark}
 We conjecture that Addendum~\ref{add_23} remains true in the more general situation where the fibers of the Anosov bundles $p_i\colon E_i\to X$, $i=1,2$, are only assumed to be infranilmanifolds. The proof given above for Addendum~\ref{add_23} would work in this more general setting provided Lemma~\ref{lemma_anosov_nil2} remained true when $M$ is only assumed to be an infranilmanifold.
\end{remark}


\section{Final remarks}
\subsection{A partially hyperbolic diffeomorphism of $(\S^d\times\T^n)\#\Sigma_\alpha$}\label{sec_61}
Here we explain that the fiberwise Anosov diffeomorphism $f\colon E_\alpha\to E_\alpha$ constructed in Section~\ref{section_anosov_bundles} can be viewed as a partially hyperbolic diffeomorphism and point out certain properties of this diffeomorphism.

First we need to recall some definitions from partially hyperbolic dynamics. Given a closed Riemannian manifold $N$ a diffeomorphism $f\colon N\to N$ is called {\it partially hyperbolic} if there
exists a continuous $Df$-invariant non-trivial splitting of the tangent bundle
$TN=E^s_f\oplus E^c_f\oplus E^u_f$ and positive constants
$\nu<\mu_-<\mu_+<\lambda$, $\nu<1<\lambda$,
and $C>0$ such that for $n>0$
 \begin{multline*}
{\;\;\;\;\;\;\;\;\;\;\;\;\;\;\;\;\;\;\;\;\;\;\;\;\;\;\|D(f^n)v\|\le C\nu^n\|v\|,\;\;\; v\in E^s_f,}\\
\shoveleft{\;\;\;\;\;\;\;\frac1C\mu_{-}^n\|v\|\le\|D(f^n)v\|\le C\mu_{+}^n\|v\|,\;\;\; v\in E_{f}^c,}\\
\shoveleft{\;\;\;\;\;\;\;\frac1C\lambda^n\|v\|\le\|D(f^n)v\|,\;\;\; v\in E_{f}^u.} \hfill
\end{multline*}
It is well known that the distributions $E^s_f$ and $E_f^u$ integrate uniquely to
foliations $\W_f^s$ and $\W_f^u$. If the distribution $E^c_f$ also integrates to an $f$-invariant foliation $\W_f^c$ then $f$ is called {\it dynamically coherent}. Furthermore, $f$ is called {\it robustly dynamically coherent} if any diffeomorphism sufficiently $C^1$ close to $f$ is also dynamically coherent. 

Using Hirsch-Pugh-Shub structural stability theorem~\cite[Theorem
7.1]{HPS} we establish the following result.
\begin{prop}\label{prop_phd}
 Let $X$ be a closed simply connected manifold and let $M\to E\stackrel{p}{\to}X$ be a smoothly non-trivial bundle. Assume that $f\colon E\to E$ is a fiberwise Anosov diffeomorphism. Then diffeomorphism $f$ is partially hyperbolic and robustly dynamically coherent. Moreover, for any $g$ which is sufficiently $C^1$ close to $f$, the center foliation $\W_g^c$ is not smooth.
\end{prop}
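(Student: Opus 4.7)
My plan is first to exhibit $f$ as a partially hyperbolic diffeomorphism whose center bundle has dimension $\dim X$. Since $f$ preserves each fiber and $p\circ f=p$, the derivative $Df$ preserves the vertical bundle $T^v E$ and acts as the identity on the quotient $TE/T^v E\simeq p^*TX$. The fiberwise Anosov splitting provides $T^v E=E^s_f\oplus E^u_f$. Choosing a smooth horizontal distribution $H_0$ complementary to $T^v E$ and identifying $H_0(e)$ with $H_0(f(e))$ via $Dp$, the map $Df$ sends $(v,h)\in T^v_eE\oplus H_0(e)$ to $(Av+Bh,h)\in T^v_{f(e)}E\oplus H_0(f(e))$, where $A$ is the fiberwise Anosov operator and $B$ measures how $H_0$ fails to be $Df$-invariant. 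I would solve the cocycle equation $s_{f(e)}=A(e)\,s_e+B(e)$ for a section $s\in\mathrm{Hom}(H_0,T^vE)$ by summing forward in time in the $E^s_f$-component and backward in time in the $E^u_f$-component, producing a H\"older $Df$-invariant graph which I take as $E^c_f$. The hyperbolic rates on $E^s_f$ and $E^u_f$ persist, while $Df|_{E^c_f}$ is an isometry in suitable coordinates, giving partial hyperbolicity.

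For dynamical coherence, the fiberwise stable and unstable foliations already integrate $E^s_f$ and $E^u_f$. To integrate $E^c_f$, I would use the topological trivialization $h\colon E\to X\times M_{x_0}$ supplied by Theorem~\ref{thm_anosov_sc} and consider the continuous family of sections $\sigma_m(x)=h^{-1}(x,m)$. Whenever $m$ is a periodic point of $f_{x_0}$, the image $\sigma_m(X)$ is a compact invariant submanifold (for an iterate of $f$) which is normally hyperbolic, so by the Hirsch--Pugh--Shub theorem~\cite{HPS} it is a smooth submanifold tangent to $E^c_f$; density of periodic points of $f_{x_0}$ combined with continuous dependence of $\sigma_m$ on $m$ then upgrades this to the assertion that $\sigma_m(X)$ is a $C^1$ section tangent to $E^c_f$ for \emph{every} $m$. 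Saturating by the fiberwise stable and unstable foliations yields $\W^{cs}_f$ and $\W^{cu}_f$. Robust dynamical coherence follows from the stability clause of~\cite{HPS}: the leaves of $\W^c_f$ are compact and normally hyperbolic, so any $g$ that is sufficiently $C^1$-close to $f$ inherits $C^1$ invariant foliations $\W^c_g$, $\W^{cs}_g$, $\W^{cu}_g$ whose leaves are $C^0$-close (in Hausdorff distance) to the corresponding leaves of $f$.

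For the final assertion, suppose toward a contradiction that $\W^c_g$ is smooth for some $g$ arbitrarily $C^1$-close to $f$. For each $m\in M_{x_0}$, let $L_m$ be the leaf of $\W^c_g$ through $m$; then $L_m$ is a smooth closed submanifold of $E$ of dimension $\dim X$, depending smoothly on $m$. Since $L_m$ is $C^0$-close to a leaf of $\W^c_f$, which is a section of $p$, the restriction $p|_{L_m}\colon L_m\to X$ is a local diffeomorphism from a compact connected manifold onto $X$, hence a covering; simple connectivity of $X$ forces it to be a diffeomorphism, so $L_m$ is a smooth section of $p$. The assembly map
$$
X\times M_{x_0}\to E,\qquad (x,m)\mapsto L_m\cap p^{-1}(x),
$$
is then a fiber-preserving diffeomorphism over $X$, giving a smooth trivialization of $p$ and contradicting the hypothesis that $p$ is smoothly non-trivial.

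The main obstacle in this plan is the integrability of the H\"older distribution $E^c_f$: my approach pivots on the \emph{topological} trivialization coming from Theorem~\ref{thm_anosov_sc} and then smooths leaves individually via Hirsch--Pugh--Shub around the dense set of periodic orbits of $f_{x_0}$. A more direct appeal to the foliation-level Hirsch--Pugh--Shub theorem would require verifying plaque expansiveness of the candidate center foliation, which ultimately reduces to uniqueness properties of the topological trivialization and does not appear to simplify matters.
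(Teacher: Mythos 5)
Your overall architecture is close to the paper's: both proofs use the topological trivialization $h$ from Theorem~\ref{thm_anosov_sc}, take the preimages of the horizontal slices $X\times\{m\}$ as the center leaves, invoke Hirsch--Pugh--Shub for robustness, and derive the non-smoothness of $\W^c_g$ by assembling the (would-be smooth) leaves/holonomies into a smooth trivialization of $p$, contradicting smooth non-triviality. The genuine divergence is in how regularity of the center leaves of $f$ is obtained: the paper gets it for free from the smooth dependence of the structural-stability conjugacies on the base parameter (\cite[Theorem~A.1]{LMM}), which makes every leaf $(p,h)^{-1}(X\times\{m\})$ smooth at once, whereas you build $E^c_f$ by a graph-transform/cocycle argument (fine, and a legitimate alternative for partial hyperbolicity) and then try to integrate it leafwise via periodic points plus density.

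That integration step is where the gaps are. First, the claim that for periodic $m$ the a priori merely topological invariant submanifold $\sigma_m(X)$ is smooth ``by \cite{HPS}'' is not a correct application: the normally hyperbolic invariant manifold and foliation theorems of \cite{HPS} take a $C^1$ manifold (or foliation) as input; they do not upgrade a topological invariant set to a smooth submanifold. The fact itself is true, but the right argument is elementary: $f^k$ fixes $\sigma_m(X)$ pointwise, each of its points is a hyperbolic fixed point of the fiber map $f^k_x$, and the implicit function theorem in local bundle charts shows this fixed point varies smoothly with $x$. Second, the ``density upgrade'' is asserted without its mechanism: $C^0$-convergence of the sections $\sigma_{m_j}$ together with smoothness for periodic $m_j$ does not by itself give differentiability of the limit; you need the uniform control coming from the fact that the periodic leaves are all tangent to the fixed \emph{continuous} distribution $E^c_f$, so that tangent planes converge along with the leaves -- or, more simply, the parametrized smooth-dependence statement the paper quotes, which removes the periodic-point detour entirely. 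Third, your robustness step \emph{is} a foliation-level application of \cite[Theorem~7.1]{HPS} and therefore requires plaque expansiveness of $\W^c_f$, which you flag as an obstacle but never verify; the paper handles exactly this point by citing \cite{PSW}. Finally, in the last step $C^0$/Hausdorff closeness of $L_m$ to a section does not make $p|_{L_m}$ a local diffeomorphism; you need closeness of the tangent planes ($E^c_g$ close to $E^c_f$, i.e.\ $C^1$-closeness of the leaves), which the HPS conclusion does supply but must be invoked explicitly, as the paper does. With these repairs the proposal goes through, but as written these four points are real gaps rather than omitted routine details.
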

Note that this proposition applies to $f\colon E_\alpha\to E_\alpha$ constructed in Section~\ref{section_anosov_bundles}. Recall that, by construction, the total space $E_\alpha$ is diffeomorphic to $(\S^d\times\T^n)\#\Sigma_\alpha$, where $\Sigma_\alpha$ is a homotopy $(d+n)$-sphere, obtained by clutching using diffeomorphism $\alpha\in\textup{Diff}_{k+d-1}(\DD^{n+d-1},\partial)$.

In general, the center foliation $\W^c_f$ of a dynamically coherent partially hyperbolic diffeomorphism is continuous foliation with smooth leaves. To the best of our knowledge, all previously known examples of partially hyperbolic dynamically coherent diffeomorphisms $f$ have the following property: \\
$(\star)$ {\it There exists a path $f_t, t\in [0,1],$ of partially hyperbolic dynamically coherent diffeomorphism such that $f_0=f$ and the center foliation $\W^c_{f_1}$ is a smooth foliation.}
\begin{conj}
 The partially hyperbolic diffeomorphism $f\colon E_\alpha\to E_\alpha$ constructed in Section~\ref{section_anosov_bundles} does not have property $(\star)$.
\end{conj}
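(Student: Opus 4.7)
The plan is to derive a contradiction with Proposition~\ref{prop_anbundle1}, which asserts that $p\colon E_\alpha \to \S^d$ is not smoothly trivial.

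Suppose for contradiction that $f$ has property $(\star)$, and pick a continuous path $\{f_t\}_{t\in[0,1]}$ of partially hyperbolic, dynamically coherent diffeomorphisms of $E_\alpha$ with $f_0 = f$ and $\W^c_{f_1}$ smooth. First I would pin down the splitting at the basepoint. Because $f = f_0$ is fiberwise Anosov, the vertical subbundle $V = \ker(Dp)$ is $Df$-invariant and $Df|_V$ splits hyperbolically, forcing
\[
E^s_{f_0} \oplus E^u_{f_0} = V, \qquad \dim E^c_{f_0} = d,
\]
with $E^c_{f_0}$ everywhere transverse to $V$. In particular $Dp|_{E^c_{f_0}}$ is a pointwise linear isomorphism onto $p^*T\S^d$.

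Second, I would propagate this transversality along the homotopy. Along a continuous family of partially hyperbolic diffeomorphisms, the invariant splitting $TE_\alpha = E^s_{f_t} \oplus E^c_{f_t} \oplus E^u_{f_t}$ depends continuously on $t$ and has constant dimensions. The key claim to establish is that $E^c_{f_t}$ remains transverse to $V$ for every $t \in [0,1]$; this uses that transversality is open at $t=0$, together with a closedness argument ruling out $E^s_{f_t} \oplus E^u_{f_t}$ rotating out of the vertical direction along the path. If this holds at $t = 1$, then the smooth distribution $E^c_{f_1}$ is transverse to the fibers of $p$ and integrable (since it is tangent to the smooth foliation $\W^c_{f_1}$), so it defines a smooth flat Ehresmann connection on $p\colon E_\alpha \to \S^d$.

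Third, I would conclude. Since $\S^d$ is simply connected for $d \geq 2$, parallel transport along paths in $\S^d$ with respect to this flat connection produces a smooth global trivialization of $p$. Thus $p\colon E_\alpha \to \S^d$ is smoothly trivial, contradicting Proposition~\ref{prop_anbundle1}, and the conjecture follows.

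The main obstacle is the propagation step: showing that the center bundle $E^c_{f_t}$ stays transverse to $V$ for all $t \in [0,1]$, or at least at $t = 1$. At $t = 0$ one has the strong identity $E^s_{f_0} \oplus E^u_{f_0} = V$, but for $t > 0$ the vertical bundle $V$ is not $f_t$-invariant, so there is no a priori dynamical reason that $E^s_{f_t} \oplus E^u_{f_t}$ should remain close to $V$. One would need to exploit the tension between the uniform hyperbolic rates on $E^s \oplus E^u$ and the slow dynamics on $E^c$, together with compactness of $E_\alpha$ and the domination inequalities, to bound the twisting of the splitting along the path. Making this quantitative (so that one can either rule out tangencies entirely or reparametrize the path to avoid them) is the crux of the argument, and is presumably the reason the statement is phrased as a conjecture rather than a theorem.
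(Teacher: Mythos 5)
You have not proved the statement, and indeed you could not have by comparison with the paper: the paper states this as an open conjecture and offers no proof, only the remark that Proposition~\ref{prop_phd} ``supports'' it. Your outline is essentially an attempt to stretch the argument of Proposition~\ref{prop_phd} from a $C^1$-neighborhood of $f$ to an arbitrary path, and the place where you stall is exactly the open point. In Proposition~\ref{prop_phd} the control comes from Hirsch--Pugh--Shub structural stability: for $g$ $C^1$-close to $f$, the center foliation $\W^c_g$ is $C^1$-close to $\W^c_f$, hence transverse to the fibers of $p$, and its holonomies (smooth if $\W^c_g$ is smooth) trivialize the bundle. Along a long path $f_t$, none of this persists: for $t>0$ the vertical bundle $V=\ker Dp$ is not $Df_t$-invariant, the hyperbolicity rates of $f_t$ have no relation whatsoever to the fibration $p$, and $f_1$ is just some partially hyperbolic, dynamically coherent diffeomorphism of $E_\alpha$ whose smooth center foliation could a priori sit in any position relative to $p$. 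So there is no ``tension between uniform rates on $E^s\oplus E^u$ and slow dynamics on $E^c$'' to exploit against $V$, because $V$ carries no dynamical meaning for $f_t$; the closedness half of your open-closed argument has nothing to bite on. You acknowledge this yourself, and that acknowledgement is the whole difficulty: what one would need is an invariant of the deformation class of the center foliation (through center foliations of partially hyperbolic diffeomorphisms) that remembers the exotic smooth structure of $E_\alpha\cong(\S^d\times\T^n)\#\Sigma_\alpha$, and no such invariant is produced here or in the paper.

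Two smaller points, granting the missing transversality at $t=1$. Your endgame is fine: a smooth integrable distribution transverse to the fibers of a bundle with compact fiber over $\S^d$, $d\ge2$, is a complete flat connection, so it yields a smooth trivialization, contradicting Proposition~\ref{prop_anbundle1} (this matches the final paragraph of the paper's proof of Proposition~\ref{prop_phd}). But your assertion that along the path ``the invariant splitting depends continuously on $t$ and has constant dimensions'' also needs justification: property $(\star)$ as stated only posits that each $f_t$ is partially hyperbolic and dynamically coherent, and a partially hyperbolic diffeomorphism may admit invariant splittings of different center dimensions, so constancy of $\dim E^c_{f_t}=d$ and continuity of $t\mapsto E^c_{f_t}$ are part of what must be arranged or proved, not assumed.
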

Note that Proposition~\ref{prop_phd} supports this conjecture.
\begin{proof}[Proof of Proposition~\ref{prop_phd}]
Let 
$$
T^{\textup{fib}}E\stackrel{\textup{def}}{=}\bigcup_{x\in X}TM_x
$$
be the bundle of vectors tangent to the fibers of $p\colon E\to X$. We have a $Df$-invariant splitting $T^{\textup{fib}}E=E_f^s\oplus E_f^u$, where $E_f^s$ is exponentially contracting and $E_f^u$ is exponentially expanding. 

Recall that the proof of Theorem~\ref{thm_anosov_sc} yields a topological trivialization
$
h\colon E\to M.
$
Hence we have a fiber preserving homeomorphism $(p,h)\colon E\to X\times M$. 

For each $y\in E$ consider the the preimage $W_f^c(y)\stackrel{\textup{def}}{=}(p,h)^{-1}(X\times\{h(y)\})$. Because of our contruction of $h$, which locally comes from parametric families of homeomorphisms given by structural stability theorem, and smooth dependence of these homeomorphisms on parameter (\ie base point)~\cite[Theorem~A.1]{LMM}, we obtain that, in fact, $\W_f^c(y)$ are smoothly embedded submanifolds of $E$ whose tangent space $E^c_f(y)\stackrel{\textup{def}}{=}T_y\W_f^c(y)$ depends continuously on $y\in E$. Hence $\W_f^c$ is a continuous foliation by smooth compact leaves. Moreover, this foliation is $f$-invariant. This follows from the fact that $\W_f^c$ is the pullback of the ``horizontal'' foliation on $X\times M$ and $(p,h)$ is a conjugacy between $f$ and $id_X\times f|_M$ (recall, that in the proof of Theorem~\ref{thm_anosov_sc} we have identified $M$ with a particular fiber $M_{x_0}$). Therefore the distribution $E_f^c$ is $Df$-invariant. Then, because the dynamics induced by $f$ on the base is $id_X$, one can check that $f$ is partially hyperbolic with respect to the $Df$-invariant splitting $TE=E_f^s\oplus E_f^c\oplus E_f^u$.

Since $\W_f^c$ is a compact foliation, Hirsch-Pugh-Shub structural stability theorem~\cite[Theorem
7.1]{HPS} in conjunction with the result which verifies plaque expansivity hypothesis for applying Hirsch-Pugh-Shub structural stability (see \eg \cite{PSW}) yields that $f$ is robustly dynamically coherent. Now, for any $g$ sufficiently $C^1$ close to $f$ the center foliation $\W_g^c$ is $C^1$ close to $\W_f^c$ and, hence, is transverse to the fibers of $p\colon E\to X$. Therefore $\W_g^c$ defines holonomy homeomorphisms $M_x\to M (=M_{x_0})$. These holonomy homeomorphisms assemble into a topological trivialization of $p\colon E\to X$. Note that if $\W_g^c$ is smooth then holonomies are smooth and the trivialization will be smooth as well. Hence $\W_g^c$ cannot be smooth, because $p\colon E\to X$ is smoothly non-trivial.
\end{proof}

\subsection{Partially hyperbolic setup}\label{sec_64}
It would be very interesting to generalize our results (\eg Theorems~\ref{thm_anosov_sc},~\ref{thm_anosov_nil}, and~\ref{thm_anosov_flows}) to the more general setting of bundles that admit fiberwise partially hyperbolic diffeomorphisms (see Section~\ref{sec_61} for the definition of a partially hyperbolic diffeomorphism).
\begin{conj}
 Let $X$ be a closed manifold or finite simplicial complex and let $p\colon E\to X$ be a fiber homotopically trivial bundle whose fiber $M$ is a closed manifold. Assume that $E$ admits a fiberwise partially hyperbolic diffeomorphism whose center distribution is lower dimensional (1 or 2). Then $p\colon E\to X$ is topologically trivial.
\end{conj}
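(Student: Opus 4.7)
The plan is to adapt the strategy used for Anosov bundles (Theorems~\ref{thm_anosov_sc}, \ref{thm_anosov_nil}, \ref{thm_anosov_flows}) to the partially hyperbolic setting, replacing Anosov structural stability with the Hirsch--Pugh--Shub structural stability theorem. First, I would establish that for each $x\in X$ the fiberwise diffeomorphism $f_x\colon M_x\to M_x$ is dynamically coherent with a center foliation $\W^c_x$; the low-dimension hypothesis on $E^c$ is crucial here, since in the $1$-dimensional case the center distribution integrates automatically, while in the $2$-dimensional case one invokes branching-foliation techniques or additional assumptions forcing coherence and plaque expansivity. Granting coherence, Hirsch--Pugh--Shub produces, for each $x\in X$, a neighborhood $\mathcal U \ni x$ and a continuous family of leaf conjugacies $\xi_z\colon M_x\to M_z$, $z\in\mathcal U$, each $C^0$-close to the identity and respecting the center foliations, with the appropriate regularity transverse to the center.

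Next, by direct analogy with the proof of Theorem~\ref{thm_anosov_flows}, I would define for each $x\in X$ the space $F_x$ of homeomorphisms $\varphi\colon M=M_{x_0}\to M_x$ which are leaf conjugacies between $f$ and $f_x$ and which satisfy suitable adaptations of the conditions P2--P4 (compatibility with the fiber homotopy trivialization $q\colon E\to M$, bi-H\"older regularity, and a H\"older-differentiable derivative along the center). Concatenating the local conjugacies $\xi_z$ along paths in $X$ shows each $F_x$ is non-empty, and the group $F=F_{x_0}$ acts freely and transitively on $F_x$ by pre-composition. Hence $\mathcal F(X)=\bigsqcup_x F_x$ is a locally trivial principal $F$-bundle over $X$, and producing a continuous global cross-section would yield the posited topological trivialization of $p\colon E\to X$.

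The main obstacle --- and the reason the statement remains conjectural --- is proving that $F$ is contractible. For Anosov flows this invoked the Livshitz cohomological equation along periodic orbits together with the rigidity result of~\cite{JH} identifying time-preserving self-conjugacies of a transitive Anosov flow; the outcome was that every element of $F$ is a time-$\beta_\varphi$ map of $g^t$ for a continuous $\beta_\varphi\colon M\to\R$, and one contracts via $\beta_\varphi \mapsto (1-s)\beta_\varphi$. In the simply connected case, Theorem~\ref{thm_anosov_flows_sc} first reduces matters to the subgroup $\bar F$ of self-conjugacies fixing every orbit setwise, using that $\EuScript M(p)\colon \EuScript M(X)\to X$ is a covering. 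Reproducing these two steps for partially hyperbolic systems requires Livshitz-type cocycle rigidity and symmetry rigidity adapted to the center foliation, which are presently available only in restricted regimes (\eg when the center dynamics is a flow or an isometric action on tori).

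I expect the hardest step to be precisely this contractibility. For $\dim E^c=1$, if the center is an oriented smooth flow --- which partial hyperbolicity with an orientable one-dimensional center often implies --- one should be able to inherit the Theorem~\ref{thm_anosov_flows} argument, modulo imposing an analogue of the ``no freely homotopic periodic orbits'' hypothesis on the center foliation. For $\dim E^c=2$, contractibility reduces to classifying the connected component of the identity in the group of center-leaf-preserving self-homeomorphisms commuting with the induced dynamics; in natural examples where $f$ acts as a linear Anosov diffeomorphism times a translation or rotation on $2$-torus center leaves, one can parametrize $\bar F$ by continuous functions $M\to \R^2$ and contract them by a straight-line homotopy, but a general proof will likely require genuinely new partially hyperbolic rigidity results beyond the Livshitz--Journ\'e framework.
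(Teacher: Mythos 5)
The statement you are trying to prove is not a theorem of the paper at all: it appears there as an open conjecture (in the final section on the partially hyperbolic setup), and the authors offer no proof --- only the remark that their proof of Theorem~\ref{thm_anosov_flows} hinged crucially on the Livshitz Theorem for Anosov flows, and that one might try to replace that ingredient by an argument using a heat flow along the center foliation. Your proposal correctly reproduces the general scheme that the paper would presumably follow (structural stability of the fiberwise dynamics, the spaces $F_x$ of conjugacies/leaf conjugacies satisfying P2--P4, the principal $F$-bundle $\mathcal F(X)\to X$, and reduction of the problem to contractibility of $F$), and you are candid that the contractibility of $F$ is exactly the step you cannot supply. That honesty is appropriate, but it means the proposal is a strategy outline, not a proof: the decisive rigidity input (the analogue of Livshitz plus the Jungreis--Hirsch classification of time-preserving self-conjugacies, or the heat-flow replacement the authors hint at) is missing, and without it the argument does not close.

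Beyond the acknowledged gap, one concrete assertion in your first paragraph is wrong and would need repair even in an outline: a one-dimensional center distribution does \emph{not} integrate automatically to an invariant foliation. Continuity gives integral curves, but not unique integrability, and there are partially hyperbolic diffeomorphisms with one-dimensional center that are not dynamically coherent (the Rodriguez Hertz--Rodriguez Hertz--Ures example on the $3$-torus). So even the first step of your plan --- producing center foliations $\W^c_x$ and leaf conjugacies via Hirsch--Pugh--Shub, which also requires verifying plaque expansivity --- is itself a nontrivial hypothesis rather than a consequence of $\dim E^c\in\{1,2\}$. In short: the approach is aligned with what the paper envisions, but both the coherence step and, above all, the contractibility of $F$ are genuine open problems, which is precisely why the statement is stated as a conjecture.
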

The above conjecture can be viewed as a possible generalization of Theorem~\ref{thm_anosov_flows}. Theorem~\ref{thm_anosov_flows} depended on the Livshitz Theorem for Anosov flows in a crucial way. For the above conjecture one may try replacing the argument that uses Livshitz Theorem with a similar argument that would use a heat flow along the center foliation instead. 

On the other hand, Theorem~\ref{thm_anosov_sc} does not admit straightforward (conjectural) generalization. This can be seen via the following example. Let $A\colon\T^2\to\T^2$ be an Anosov diffeomorphism, then $f=(A,id_{\S^3})\colon\T^2\times\S^3\to\T^2\times\S^3$ is partially hyperbolic. By multiplying the Hopf fibration by $\T^2$ we can view $\T^2\times\S^3$ as the total space of the fiber bundle $\T^3\to\T^2\times\S^3\to\S^2$. It is clear that $f$ is a fiberwise partially hyperbolic diffeomorphism, whose restrictions to the fibers have the form $A\times id_{S^1}$. Hence, for a generalization of Theorem~\ref{thm_anosov_sc} we need to further restrict the class of fiberwise partially hyperbolic diffeomorphism. One natural class is the following one. A partially hyperbolic diffeomorphism $g\colon N\to N$ is called {\it irreducible} if it verifies the following conditions:
\begin{enumerate}
 \item diffeomorphism $g$ does not fiber over a (topologically) partially hyperbolic (or Anosov) diffeomorphism $\hat g\colon \hat N\to \hat N$ of a lower dimensional manifold $\hat N$; that is, one cannot find a fiber bundle $p\colon N\to \hat N$ and a (topologically) partially hyperbolic (or Anosov) diffeomorphism $\hat g\colon \hat N\to \hat N$ such that $p\circ g=\hat g\circ p$;
 \item if $g'$ is homotopic to $g$ then $g'$ also verifies 1;
 \item if $\tilde g$ is a finite cover of $g$ then $\tilde g$ also verifies 1 and 2. 
\end{enumerate}

\begin{conj}
Let $X$ be a simply connected closed manifold or a simply connected finite simplicial complex and let $p\colon E\to X$ be a fiber bundle whose fiber $M$ is a closed manifold. Assume that $E$ admits a fiberwise irreducible partially hyperbolic diffeomorphism with low dimensional (1 or 2) center distribution. Then $p\colon E\to X$ is topologically trivial.
\end{conj}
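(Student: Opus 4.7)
The plan is to adapt the proof of Theorem~\ref{thm_anosov_flows_sc} to the partially hyperbolic setting, with the Hirsch-Pugh-Shub leaf conjugacy theorem replacing Anosov structural stability and a heat-flow argument along center leaves replacing the Livshitz coboundary step. After identifying $M$ with $M_{x_0}$ and writing $f$ for $f_{x_0}$, I would first define, for each $x\in X$, the space $F_x$ of homeomorphisms $\varphi\colon M\to M_x$ intertwining the fiberwise center foliations of $f$ and $f_x$ and semiconjugating $f$ to $f_x$ modulo sliding along center leaves, subject to regularity analogues of P3 and P4 (bi-H\"older continuity, and differentiability with H\"older-regular derivative along a chosen center direction). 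Using Hirsch-Pugh-Shub structural stability with plaque expansivity (and its parametrized version in the style of Lemma~\ref{lemma2}), I would verify, exactly as in the Anosov flow proof, that $F:=F_{x_0}$ is a topological group acting freely and transitively on each $F_x$ and that $\mathcal F(X)=\bigsqcup_x F_x\to X$ is a locally trivial principal $F$-bundle.

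Next I would introduce the space $\EuScript M(X)$ of markings of center leaves over points of $X$. As in the proof of Theorem~\ref{thm_anosov_flows_sc}, the natural forgetful map $\EuScript M(X)\to X$ is a covering, and since $X$ is simply connected, every self leaf-conjugacy in the connected component of the trivial marking must preserve every individual center leaf setwise. This reduces the structure group of $p\colon E\to X$ to the subgroup $\bar F\subset F$ of such leaf-preserving self conjugacies, and the bundle will be topologically trivial provided $\bar F$ is contractible. Here the irreducibility hypothesis is supposed to enter: since $f$ admits no Anosov or partially hyperbolic quotient (even after passing to homotopic maps or finite covers), the induced dynamics on the leaf space of the center foliation should carry no extra symmetry that produces non-trivial elements of $\bar F$ beyond those encoded as pointwise shifts along center leaves.

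The contractibility of $\bar F$ is the hardest step and the one most different from the Anosov flow case. For $\varphi\in\bar F$ one would like to encode $\varphi$ by a continuous displacement function $\beta_\varphi$ measuring how far $\varphi$ moves each point along its center leaf, so that the deformation $\varphi_s(x)=$ ``shift $x$ by $(1-s)\beta_\varphi(x)$ along the center leaf through $x$'' retracts $\bar F$ onto $\mathrm{id}_M$, continuously in $\varphi$ and $s$. In the Anosov flow proof, Livshitz's theorem produced $\beta_\varphi$ as an honest coboundary along periodic orbits; in the partially hyperbolic case no such closed-form construction is available, and following the authors' suggestion one would instead run a heat flow along the center leaves to produce and regularize $\beta_\varphi$. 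The principal anticipated obstacles are: (i) setting up a heat flow that works uniformly across possibly non-compact 1D or 2D leaves when the center distribution is only H\"older-regular transversally; (ii) turning the irreducibility hypothesis into a rigorous statement forcing every element of $\bar F$ to be an honest center shift, rather than a more exotic leaf-preserving homeomorphism; and (iii) in the 2D center case, handling the dependence on leaf topology (torus, cylinder, plane) when choosing a canonical heat kernel and a notion of displacement. Overcoming (i)--(iii) is the main analytic and dynamical content remaining in the conjecture.
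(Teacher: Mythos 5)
There is nothing in the paper to check your argument against: the statement you are addressing is posed as an open conjecture in Section~\ref{sec_64}, and the authors offer no proof of it --- only the one-line suggestion that the Livshitz-theorem step in Theorem~\ref{thm_anosov_flows} might be replaced by a heat flow along the center foliation. Your proposal essentially transcribes that suggested strategy (run the marking/covering-space argument of Theorem~\ref{thm_anosov_flows_sc} with Hirsch--Pugh--Shub leaf conjugacy in place of Anosov structural stability, then contract the reduced structure group by a center-displacement retraction), but it does not carry any of its steps out, and you say so yourself: your obstacles (i)--(iii) are exactly the mathematical content of the conjecture, so what you have written is a research plan, not a proof.

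Beyond the admitted gaps, two points in the plan would already fail as stated. First, the Hirsch--Pugh--Shub theorem~\cite{HPS} is not a drop-in replacement for structural stability: it requires dynamical coherence and plaque expansivity of each fiber map (neither is implied by the conjecture's hypotheses --- the center distribution of a partially hyperbolic diffeomorphism need not even integrate to a foliation), and it produces a leaf conjugacy that is unique only up to sliding along center plaques, so the analogues of properties P1, P3, P4, of Lemma~\ref{lemma1}, and of the parametrized Lemma~\ref{lemma2} are all unestablished; in particular the claim that $\EuScript M(p)\colon\EuScript M(X)\to X$ is a covering rests, in the flow case, on the uniqueness-up-to-sliding discussion of~\cite[Section~5]{PSW}, and you would need (and do not supply) its counterpart here. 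Second, the decisive step in the flow proofs is not just Livshitz: after producing the displacement $\alpha$ one invokes the rigidity of centralizers of Anosov flows~\cite{JH} to conclude $\varphi_1=g^{t_0}$, i.e.\ that every element of the reduced structure group is an honest shift along orbits. No analogue of either ingredient is known for $1$- or $2$-dimensional center foliations, and your appeal to irreducibility ("should carry no extra symmetry") is precisely the heuristic that would have to be made rigorous. So the proposal contains a genuine and acknowledged gap at every load-bearing step; the conjecture remains open.
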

\begin{q}
Can one construct an example of a simply connected manifold $X$ and a non-trivial fiber bundle $p\colon E\to X$ whose fiber $M$ is a closed manifold such that the total space $E$ can be equipped with a fiberwise irreducible partially hyperbolic diffeomorphism?
\end{q}

\begin{remark}
If one modifies the definition of ``fiberwise partially hyperbolic diffeomorphism'' to allow diffeomorphisms that permute the fibers (\ie factor over a non-trivial diffeomorphism of $X$) then the answer to the above question is positive~\cite{GORH}.
\end{remark}

\subsection{Acknowledgements} We thank Rafael de la Llave and Federico Rodriguez Hertz for very useful communications. Also we thank the referee for comments which helped to improve our presentation.


\begin{thebibliography}{texttLLL}
\bibitem[An69]{An} D. Anosov. {\it Geodesic flows on closed Riemannian manifolds with negative curvature.} Proc. Steklov Inst. Math.  90 (1969), 1-235.
\bibitem[ABK70]{ABK}  P. L. Antonelli, D. Burghelea, P. J. Kahn,
{\it Gromoll groups, ${\rm Diff}(\S^{n})$ and bilinear constructions
of exotic spheres.} Bull. Amer. Math. Soc. 76 1970 772--777.
\bibitem[C61]{C} J. Cerf,
{\it Topologie de certains espaces de plongements.} Bull. Soc. Math.
France 89 1961 227--380.
\bibitem[FG12]{FG} F.T. Farrell, A. Gogolev, {\it Anosov diffeomorphisms constructed from $\pi_k(\textup{Diff}(S^n))$}. J. Topol. 5 (2012), no. 2, 276--292.
\bibitem[FG13]{FG2} F.T. Farrell, A. Gogolev, {\it The space of Anosov diffeomorphism}, to appear in Journal LMS.
\bibitem[FJ89]{FJ89} F.T. Farrell, L.E. Jones, {\em Negatively curved manifolds with exotic smooth structures}, J. Amer. Math. Soc. 
{\bf 2} (1989) 899-908.
\bibitem[FJ87]{FJ87} F.T. Farrell, L.E. Jones, {\it K-theory and dynamics. II.} Ann. of Math. (2) 126 (1987), no. 3, 451--493. 
\bibitem[FO09]{FO1} F.T. Farrell, P. Ontaneda, {\it The Teichm\"uller space of pinched negatively curved metrics on a hyperbolic manifold is not contractible.} Ann. of Math. (2) 170 (2009), no. 1, 45--65. 
\bibitem[FO10a]{FO2} F.T. Farrell, P. Ontaneda, {\it On the topology of the space of negatively curved metrics.} J. Differential Geom. 86 (2010), no. 2, 273--301.
\bibitem[FO10b]{FO3} F.T. Farrell, P. Ontaneda, {\it Teichm\"uller spaces and negatively curved fiber bundles.} Geom. Funct. Anal. 20 (2010), no. 6, 1397--1430.
\bibitem[Fr69]{Fr}
J. Franks.
{\it Anosov diffeomorphisms on tori.}
Trans. Amer. Math. Soc. 145 1969 117--124.
\bibitem[GORH14]{GORH} A. Gogolev, P. Ontaneda, F. Rodrigues Hertz, {\it New partially hyperbolic diffeomorphisms I,} preprint.
\bibitem[G65]{Gott} D. H. Gottlieb, {\it A certain subgroup of the fundamental group.} Amer. J. Math. 87 1965 840--856. 
\bibitem[Gr00]{Gr} M. Gromov, {\it Three remarks on geodesic dynamics and fundamental group.} Enseign. Math. (2) 46 (2000), no. 3-4, 391--402.
\bibitem[H78]{Hat} A.E. Hatcher, {\it Concordance spaces, higher simple-homotopy theory, and applications.} Algebraic and geometric topology Proc. Sympos. Pure Math., XXXII, Amer. Math. Soc., Providence, R.I., pp. 3--21, 1978.
     \bibitem[HPS77]{HPS} M. Hirsch, C. Pugh, M. Shub.
        {\em Invariant manifolds.}
        Lecture Notes in Math., 583, Springer-Verlag, (1977).
\bibitem[HW69]{HW} W.-C. Hsiang, C.T.C. Wall, {\it On homotopy tori. II.} Bull. London Math. Soc. 1, 1969 341--342. 
\bibitem[JH91]{JH} D. Jungreis, M. Hirsch, {\it
Rigidity of centralizers of Anosov flows. }
Internat. J. Math. 2 (1991), no. 1, 37--40. 
\bibitem[KH95]{KH} A. Katok, B. Hasselblatt, {\it Introduction to the modern theory of dynamical systems.} Cambridge University Press, (1995).
\bibitem[KKPW89]{KKPW} A. Katok,  G. Knieper, M. Pollicott, H. Weiss, {\it Differentiability and analyticity of topological entropy for Anosov and geodesic flows.} Invent. Math. 98 (1989), no. 3, 581--597. 
\bibitem[KM63]{KM} M. A. Kervaire, J. W. Milnor,
{\it Groups of homotopy spheres.} I. Ann. of Math. (2) 77 1963
504--537.
\bibitem[Kn02]{Kn} G. Knieper, {\it Hyperbolic dynamics and Riemannian geometry.} Handbook of dynamical systems, Vol. 1A, 453--545, North-Holland, Amsterdam, 2002. 
\bibitem[LMM86]{LMM} R. de la Llave,  J. M. Marco,  R. Moriy\'on, {\it Canonical perturbation theory of Anosov systems and regularity results for the Liv\v{s}ic cohomology equation.} Ann. of Math. (2) 123 (1986), no. 3, 537--611. 
\bibitem[MM79]{MM} I. Madsen, R.J. Milgram, {\it The Classifying Spaces for Surgery and Cobordism of Manifolds,} Annals of Math. Studies, 92, Princeton Univ. Press, Princeton, 1979.
\bibitem[M49]{Malcev}
    A. I. Mal$'$cev, {\it On a class of homogeneous spaces.} Izvestiya Akad. Nauk. SSSR. Ser. Mat. 13, (1949) 9--32.
\bibitem[M74]{Mann} A. Manning, {\it There are no new Anosov diffeomorphisms on tori.} Amer. J. Math., 96 (1974), 422--429.
\bibitem[M59]{M59} J. Milnor, {\it On spaces having the homotopy type of a $CW$-complex,} Trans. AMS 90 (1959), 272--280.
\bibitem[PSW12]{PSW} C. Pugh, M. Shub, A. Wilkinson, {\it H\"older foliations, revisited.} J. Mod. Dyn. 6 (2012), no. 1, 79--120. 
\bibitem[S66]{S66} E.H. Spanier, {\it Algebraic topology.} McGraw-Hill Book Co., New York-Toronto, Ont.-London 1966 xiv+528 pp.
\bibitem[St63]{St} J. Stasheff, {\it A classification theorem for fibre spaces.} Topology 2 1963 239--246.
\bibitem[St51]{Steen} N. Steenrod, {\it The Topology of Fibre Bundles.} Princeton Mathematical Series, vol. 14. Princeton University Press, Princeton, N. J., 1951. viii+224 pp.
\bibitem[W75]{West} J. E. West, {\it Mapping Hilbert cube manifolds to ANR's: a solution of a conjecture of Borsuk. }
Ann. of Math. (2) 106 (1977), no. 1, 1--18
\bibitem[W70]{W} P. Walters, {\it
Conjugacy properties of affine transformations of nilmanifolds.}
Math. Systems Theory 4 (1970), 327--333.
\end{thebibliography}
\end{document}